\newcommand{\EE}{{\mathbb E}}
\newcommand{\NN}{{\mathbb N}}
\newcommand{\PP}{{\mathbb P}}
\newcommand{\RR}{{\mathbb R}}
\newcommand{\TT}{{\mathbb T}}
\newcommand{\abs}[1]{ \left| #1 \right|}
\newcommand{\del}{\partial}
\newcommand{\eps}{\varepsilon}
\newcommand{\Id}{\on{Id}}
\newcommand{\ind}{\mathbf{1}}
\newcommand{\ip}[1]{\langle #1 \rangle}
\newcommand{\nor}[2]{\left\|#1\right\|_{#2}}
\newcommand{\oline}[1]{\overline{#1}}
\newcommand{\oo}{\infty}
\newcommand{\pars}[1]{\left(#1\right)}
\newcommand{\uline}[1]{\underline{#1}}
\newcommand{\mcl}{\mathcal}
\newcommand{\mbb}{\mathbb}
\newcommand{\mbf}{\mathbf}
\newcommand{\on}{\operatorname}
\newtheorem{lemma}{Lemma}
\newtheorem{proposition}{Proposition}
\newtheorem{theorem}{Theorem}
\newtheorem{corollary}{Corollary}
\theoremstyle{definition}
\numberwithin{equation}{section}
\numberwithin{lemma}{section}
\numberwithin{proposition}{section}
\numberwithin{theorem}{section}
\numberwithin{corollary}{section}
\numberwithin{definition}{section}
\begin{document}
\title[Homogenization of mixing Hamilton-Jacobi equations]{Scaling limits and homogenization of mixing Hamilton-Jacobi equations}
\author{Benjamin Seeger}
\address{Place du Mar\'echal de Lattre de Tassigny, 75016 Paris, France}
\email{seeger@ceremade.dauphine.fr}

\thanks{Partially supported by the National Science Foundation Mathematical Sciences Postdoctoral Research Fellowship under Grant Number DMS-1902658}

\subjclass[2010]{Primary 60H15, 35D40}
\keywords{Homogenization, mixing, stochastic Hamilton-Jacobi equations, scaling limits, pathwise viscosity solutions}
\date{\today}

\maketitle

\begin{abstract}
	We study the homogenization of nonlinear, first-order equations with highly oscillatory mixing spatio-temporal dependence. It is shown in a variety of settings that the homogenized equations are stochastic Hamilton-Jacobi equations with deterministic, spatially homogenous Hamiltonians driven by white noise in time. The paper also contains proofs of some general regularity and path stability results for stochastic Hamilton-Jacobi equations, which are needed to prove some of the homogenization results and are of independent interest.
\end{abstract}

\section{Introduction}

The objective of this paper is to study the asymptotic behavior of Hamilton-Jacobi equations with oscillatory spatial dependence and correlated multiplicative noise dependence in time. More precisely, for small $\eps > 0$ and fixed $\gamma > 0$, we consider problems of the form
\begin{equation}\label{E:introscaling}
	\begin{dcases}
		u^\eps_t + \frac{1}{\eps^{\gamma}} \sum_{i=1}^m H^i\pars{ Du^\eps, \frac{x}{\eps},\omega} \xi^i\pars{ \frac{t}{\eps^{2\gamma}},\omega} = 0 & \text{in } \RR^d \times (0,\oo) \times \Omega \quad \text{and}\\
		u^\eps(x,0,\omega) = u_0(x) & \text{in } \RR^d \times \Omega,
	\end{dcases}
\end{equation}
where $u_0 \in UC(\RR^d)$, the space of uniformly continuous functions on $\RR^d$, $(\Omega,\mbb F, \mbb P)$ is a given probability space, $H = (H^1,H^2,\ldots, H^m): \RR^d \times \RR^d \times \Omega \to \RR^m$ possesses some sort of self-averaging property in the spatial variable, and $\xi = (\xi^1,\xi^2,\ldots,\xi^m): [0,\oo) \times \Omega \to \RR^m$ is an approximation of white noise, in the sense that
\begin{equation}\label{A:introxi}
	\left\{
	\begin{split}
	&t \mapsto \xi(t,\cdot) \text{ is piecewise continuous with $\PP$-probability one and}\\
	&\frac{1}{\delta} \xi\pars{ \frac{\cdot}{\delta^2},\cdot} \xrightarrow{\delta \to 0} dB \text{ in law, where}\\
	&B: [0,\oo) \times \Omega \to \RR \text{ is a standard Brownian motion;}
	\end{split}
	\right.
\end{equation}
see also \eqref{A:xi} below. For notational ease, when it does not cause confusion, we suppress the dependence on the parameter $\omega \in \Omega$.

We then identify a variety of settings in which \eqref{E:introscaling} approximates a stochastic partial differential equation with no spatial oscillations, that is, there exists $M \in \NN$, a deterministic $\oline{\mbf H} = (\oline{H}^1, \oline{H}^2, \ldots, \oline{H}^M) \in C(\RR^d, \RR^M)$, and a Brownian motion $\mbf B = (B^1, B^2,\ldots, B^M) : [0,\oo) \times \Omega \to \RR^M$ such that, as $\eps \to 0$, $u^\eps$ converges in distribution to the unique stochastic viscosity solution $\oline{u}$ of
\begin{equation}\label{E:introlimit}
	d \oline{u} + \oline{\mbf H}(D \oline{u}) \circ d \mbf B = 0 \quad \text{in } \RR^d \times (0,\oo) \quad \text{and} \quad \oline{u}(\cdot,0) = u_0 \quad \text{in } \RR^d.
\end{equation}
We recall some aspects of the Lions-Souganidis theory of stochastic viscosity solutions in Section \ref{S:theory} below. For more details, see also \cite{LSbook, LS1, LS2, LS4, LS3, Snotes}.

%When $\gamma = 1$, $u^\eps$ is given by $u^\eps(x,t) = \eps u_\eps(x/\eps, t/\eps^2)$, where
%\begin{equation}\label{E:unscaled}
%	u_{\eps,t} + H(Du_\eps, x, t,\omega) = 0 \quad \text{in } \RR^d \times (0,\oo) \quad \text{and} \quad u_\eps(\cdot,0) = \eps^{-1} u_0(\eps \cdot) \quad \text{in } \RR^d,
%\end{equation}
%and, hence, the behavior of \eqref{E:introscaling} describes the average long range, long time behavior of solutions of \eqref{E:unscaled} with large, slowly-varying initial data.

The parameter $\gamma$ in \eqref{E:introscaling} encodes the relationship between the spatial and temporal oscillations. In most of the results we prove, $\gamma > 0$ must be sufficiently small, which means that the mixing in time is mild in relation to the spatial oscillations. This is motivated by the fact that, in general, the law of the white noise approximation $\xi$ has a nontrivial effect on the homogenous Hamiltonian $\oline{H}$, and even its dimension $M$. In other words, there is no ``universal'' limit of \eqref{E:introscaling} for all fields $\xi$ that satisfy \eqref{A:introxi}.

%Furthermore, the limiting behavior of the problem is very sensitive to the precise structure of the Hamiltonian itself. For example, even if $m = 1$, if $H$ is not convex in the gradient variable, then \eqref{E:introscaling} can exhibit ballistic behavior, and, if
%\[
%	H(p,x,t,\omega) = G(p)\xi^1(t,\omega) + f(x)\xi^2(t,\omega)
%\]
%for some $G \in C(\RR^d, \RR)$, $f \in C(\TT^d, \RR)$, and $\xi^1,\xi^2: [0,\oo) \times \Omega \to \RR$ satisfy \eqref{A:xi}, then the correlation between $\xi^1$ and $\xi^2$, as well as their precise laws, have a nontrivial effect on the structure of the homogenized equation \eqref{E:introlimit}.

Problems of the form \eqref{E:introscaling} arise in a variety of applications, including differential games, pathwise optimal control, and front propagation. In the latter example, we consider a family of surfaces $\{\Gamma^\eps_t)_{t \ge 0} \subset \RR^d$ evolving according to the prescribed oscillatory and fluctuating normal velocity
\[
	V^\eps = - \frac{1}{\eps^\gamma} \sum_{i=1}^m A^i\pars{ n, \frac{x}{\eps}} \xi^i \pars{ \frac{t}{\eps^{2\gamma}} ,\omega},
\]
where, for each $i = 1,2,\ldots,m$, $A^i: S^{d-1} \times \RR^d \times \Omega \to \RR$ is continuous in the first two variables, $S^{d-1} \subset \RR^d$ is the unit sphere, and $n \subset S^{d-1}$ is the outward unit normal vector to the surface $\Gamma^\eps_t$ at the point $x$. In general, such an interfacial motion develops singularities and/or discontinuities in finite time, even if all of the data is smooth. A weak sense is given to this problem with the level-set formulation (see \cite{BSS} for more details), in which $\Gamma^\eps_t$ is identified with the zero-level set of $u^\eps(\cdot,t)$, where $u^\eps$ solves \eqref{E:introscaling} with the Hamiltonians taking the form
\[
	H^i(p,x,\omega) = A^i\pars{ \frac{p}{|p|}, x, \omega} |p|.
\]
Under certain structural conditions on $A$, as $\eps \to 0$, $u^\eps$ converges locally uniformly and in distribution to the solution $\oline{u}$ of 
\begin{equation}\label{E:introeffectivefronts}
	d \oline{u} + \oline{\mbf A} \pars{ \frac{D \oline{u} }{ \abs{ D \oline{u}} } } |D\oline{u}| \circ d\mbf B = 0 \quad \text{in } \RR^d \times (0,\oo) \quad \text{and} \quad \oline{u}(\cdot,0) = u_0 \quad \text{in } \RR^d,
\end{equation}
where $\oline{\mbf A}: S^{d-1} \to \RR^M$ is deterministic and continuous and $\mbf B: [0,\oo) \times \Omega \to \RR^M$ is a Brownian motion. Through the level-set formulation, this corresponds to a collection of surfaces $(\oline{\Gamma}_t)_{t \ge 0}$ evolving according to the normal velocity 
\[
	\oline{V} = -  \oline{\mbf A}(n) \circ d\mbf B.
\]

There is an extensive literature on the approximation of stochastic partial differential equations by equations with mixing time dependence. For instance, results of this type for linear and semilinear parabolic partial differential equations were obtained by Bouc and Pardoux \cite{BP}, Kushner and Huang \cite{KH}, and Watanabe \cite{W}, and partial differential equations with spatial averaging and time fluctuations have been studied by Campillo, Kleptsyna, and Piatnitski \cite{CKP} and Pardoux and Piatnitski \cite{PP}. 

The main mathematical purpose of this work is to extend the above results to equations of first order and with nonlinear dependence on the gradient. Due to the highly oscillatory dependence in time, obtaining regularity estimates is far from straightforward, and, therefore, neither is establishing the tightness of probability measures. In addition, the nonlinear nature of the problem gives rise to further difficulties in identifying the limiting equation.

\subsection{The main results}

We now give an informal summary of the main results of the paper. Precise assumptions and statements can be found later on.

%We focus our attention on a specific class of Hamiltonians, one which already reveals the complexity of the problem. These take the form
%\begin{equation}\label{A:linearnoise}
%	H(p,y,t,\omega) = \sum_{i=1}^m H^i(p,y,\omega) \xi^i(t,\omega) \quad \text{for } (p,y,t,\omega) \in \RR^d \times \RR^d \times [0,\oo) \times \Omega,
%\end{equation}
%where, for each $i = 1,2,\ldots,m$, $H^i: \RR^d \times \RR^d \times \Omega \to \RR$ is self-averaging in space and $\xi^i: [0,\oo)\times \Omega \to \RR$ is (piecewise) smooth, centered, and mixing, so that, in distribution, as $\eps \to 0$, the field
%\begin{equation}\label{introwhitenoise}
%	(t,\omega) \mapsto \frac{1}{\eps^\gamma} \xi^i \pars{ \frac{t}{\eps^{2\gamma}} , \omega }
%\end{equation}
%approaches white noise in time. More details and examples of such fields are discussed in subsection \ref{SS:mixing}.

We divide the results into two cases, depending on whether $m = 1$ (the single-noise case) or $m > 1$ (the multiple-noise case).

\subsubsection{The single-noise case}

The problem of interest, for some convex and coercive $H: \RR^d \times \RR^d \times \Omega \to \RR$ and a white noise approximation $\xi: [0, \oo) \times \Omega \to \RR$, is
\begin{equation}\label{E:introsinglenoise}
	u^\eps_t + \frac{1}{\eps^\gamma} H\pars{Du^\eps,\frac{x}{\eps},\omega} \xi\pars{\frac{t}{\eps^{2\gamma}}, \omega} =0 \quad \text{in } \RR^d \times (0,\oo) \quad \text{and} \quad u^\eps(\cdot,0) = u_0 \quad \text{in } \RR^d.
\end{equation}

Many different assumptions for the dependence of the random Hamiltonian on space are covered by the results in Section \ref{S:singlenoise}. The Hamiltonian may even be allowed to depend on the ``slow'' spatial variable, as in
\[
	u^\eps_t + \frac{1}{\eps^\gamma} H\pars{Du^\eps,x,\frac{x}{\eps},\omega} \xi \pars{ \frac{t}{\eps^{2\gamma}},\omega} = 0\quad \text{in } \RR^d \times (0,\oo) \quad \text{and} \quad u^\eps(\cdot,0) = u_0 \quad \text{in } \RR^d.
\]
The field $\xi$, meanwhile, is allowed to be any reasonable approximation of white noise, or even true white noise, as for the problem
\[
	du^\eps + H\pars{ Du^\eps, \frac{x}{\eps},\omega} \circ dB = 0 \quad \text{in } \RR^d \times (0,\oo) \quad \text{and} \quad u^\eps(\cdot,0) = u_0 \quad \text{in } \RR^d,
\]
where $B: \Omega \times [0,\oo) \to \RR$ is a standard Brownian motion.

As an example of the types of results available in this setting, we assume here that
\begin{equation}\label{A:introsinglenoise}
	\left\{
	\begin{split}
		&\text{the white noise approximation $\xi$ is piecewise smooth,}\\
		&\text{$p \mapsto H(p,x,\omega)$ is convex and coercive, uniformly for $(x,\omega) \in \RR^d \times \Omega$, and}\\
		&\text{either $x \mapsto H(\cdot,x)$ is deterministic and periodic, or}\\
		&\text{$(x,\omega) \mapsto H(\cdot,x,\omega)$ is a random, stationary-ergodic field.}
	\end{split}
	\right.
\end{equation}

\begin{theorem}\label{T:introsinglenoise}
	Let $\gamma > 0$ and $u_0 \in UC(\RR^d)$, and assume that $H$ and $\xi$ satisfy \eqref{A:introsinglenoise}. Then there exists a deterministic, convex, and coercive $\oline{H}: \RR^d \to \RR$, which depends only on $H$, and a Brownian motion $B: [0,\oo) \times \Omega \to \RR$ such that, as $\eps \to 0$, the solution $u^\eps$ of \eqref{E:introsinglenoise} converges in distribution to the unique stochastic viscosity solution $\oline{u}$ of
	\begin{equation}\label{E:introsinglenoiseeq}
		d\oline{u} + \oline{H}(D\oline{u}) \circ dB =0 \quad \text{in } \RR^d \times (0,\oo) \quad \text{and} \quad \oline{u}(\cdot,0) = u_0 \quad \text{in } \RR^d.
	\end{equation}
\end{theorem}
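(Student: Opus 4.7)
The plan is to combine spatial homogenization (periodic or stationary-ergodic) with a pathwise time-rescaling argument that turns the multiplicative mixing noise into a driving Brownian path. Throughout, I fix a terminal time $T > 0$ and work on $\RR^d \times [0,T]$.

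\textbf{Step 1: Uniform regularity and tightness.} I would first prove that, uniformly in $\eps$, the family $\{u^\eps\}$ is locally equicontinuous in $(x,t)$. Spatial equicontinuity follows from finite speed of propagation because $H$ is coercive in $p$: if $u_0$ has modulus $\rho$, then $u^\eps(\cdot,t)$ has modulus controlled by $\rho$. The delicate part is temporal regularity, because the coefficient $\eps^{-\gamma}\xi(t/\eps^{2\gamma})$ is unbounded. Here I would invoke the pathwise regularity estimates for stochastic Hamilton-Jacobi equations stated elsewhere in the paper (the ``general regularity and path stability results'' advertised in the abstract), applied to the primitive
\[
\zeta^\eps(t) := \int_0^t \eps^{-\gamma}\xi(s/\eps^{2\gamma},\omega)\,ds,
\]
which by the mixing assumption on $\xi$ converges in law in $C([0,T])$ to a Brownian motion $B$ with an explicit variance. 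Combined with Ascoli and Prokhorov, this gives tightness of the laws of $u^\eps$ on $C(\RR^d \times [0,T])$ equipped with local uniform convergence, and simultaneously tightness of the joint laws $(u^\eps, \zeta^\eps)$.

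\textbf{Step 2: Spatial homogenization on sign-constant time subintervals.} Because $\xi$ is piecewise smooth, on each realization there is a (locally finite) partition $0 = t_0 < t_1 < \cdots < t_N = T$ such that $\xi$ has constant sign on each $(t_k,t_{k+1})$. On such an interval where $\xi > 0$, the change of variable $\tau = \zeta^\eps(t) - \zeta^\eps(t_k)$ recasts \eqref{E:introsinglenoise} as
\[
v^\eps_\tau + H(Dv^\eps, x/\eps, \omega) = 0, \qquad v^\eps(\cdot,0) = u^\eps(\cdot,t_k),
\]
a standard cell problem for the spatial homogenization of a convex, coercive HJ equation on time horizon $\Delta\zeta_k := \zeta^\eps(t_{k+1})-\zeta^\eps(t_k)$. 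When $\xi < 0$, the same substitution gives $v^\eps_\tau - H(Dv^\eps, x/\eps, \omega) = 0$, whose underlying Hamiltonian $-H$ is concave. Since $\bar{H}$ obtained by homogenization of the convex problem is itself convex and coercive, the Hopf-Lax/Lax-Oleinik semigroups $S^+_t$ (forward, for $\bar{H}$) and $S^-_t$ (corresponding to $-\bar{H}$) are both well defined on $UC(\RR^d)$; this is exactly how the Lions-Souganidis solution of \eqref{E:introsinglenoiseeq} is constructed pathwise from an increasing-decreasing decomposition of $B$.

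\textbf{Step 3: Quantified cell-problem homogenization.} On each subinterval I invoke the classical periodic homogenization of Lions-Papanicolaou-Varadhan (or its stationary-ergodic extension due to Souganidis and Rezakhanlou-Tarver) to identify the effective Hamiltonian $\bar{H}$, \emph{independent of the time interval or $\omega$}, so that
\[
\sup_{x \in K}\,\bigl|v^\eps(x,\Delta\zeta_k) - S^{\pm}_{|\Delta\zeta_k|}[u^\eps(\cdot,t_k)](x)\bigr| \longrightarrow 0
\]
as $\eps \to 0$ for every compact $K$. To turn the finite-$N$ convergence into convergence of $u^\eps(\cdot,T)$, I iterate this over $k$, using the Lipschitz continuity of the semigroups $S^\pm$ on $UC$ (with respect to sup norm) to control the propagation of errors. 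The step-size $N = N(\omega,\eps)$ must be controlled: one has to show the homogenization error is uniform over all subintervals, no matter how short. This is the main technical obstacle, and I would resolve it by truncation: replace $\xi$ by $\xi\ind_{\{|\xi|\ge \delta\}}$ outside a small set of near-zero-crossing times, homogenize on the ``good'' subintervals where $|\Delta\zeta_k|$ is bounded below, and use the uniform continuity of $u^\eps$ in time (from Step 1) to absorb the contribution of the discarded set as $\delta \to 0$.

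\textbf{Step 4: Identification of the limit and conclusion.} By tightness, extract a subsequence along which $(u^\eps,\zeta^\eps)$ converges in law; by Skorokhod representation, realize this convergence almost surely as $(u,B)$. Steps 2-3 identify $u$ as the piecewise Hopf-Lax construction of the Lions-Souganidis solution $\bar{u}$ of \eqref{E:introsinglenoiseeq} driven by $B$: at each time $t$, $u(\cdot,t) = S^{\eta_1}_{|B(t_1)|} \circ \cdots \circ S^{\eta_N}_{|B(t)-B(t_{N})|}[u_0]$ for the appropriate signs $\eta_j$, which is exactly $\bar{u}(\cdot,t)$ by the path stability of the Lions-Souganidis solution under uniform path approximation (again quoted from the general theory discussed earlier). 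Since the limit is unique and deterministic in terms of $B$, the full sequence converges in distribution, yielding the theorem.
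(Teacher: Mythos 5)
There is a genuine gap in Steps 2--3. You decompose $[0,T]$ into the sign-constant subintervals of the pre-limit coefficient $\xi(\cdot/\eps^{2\gamma})$ and homogenize interval by interval, then acknowledge that controlling $N(\omega,\eps)$ is ``the main technical obstacle'' and propose a truncation. This obstacle is not resolvable along the lines you suggest. The number of sign-constant subintervals of $\xi(\cdot/\eps^{2\gamma})$ on $[0,T]$ grows like $\eps^{-2\gamma}$, and the total variation of $\zeta^\eps$ on $[0,T]$ grows like $\eps^{-\gamma}$ (it is approximating Brownian motion), so even after restricting to subintervals whose $\zeta^\eps$-increment is bounded below by $\delta_0 > 0$, the count is still of order $\eps^{-\gamma}/\delta_0 \to \oo$. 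Truncating $\xi$ at level $\delta$ discards a set of times over which the time-change still advances by roughly $\eps^{-\gamma}\delta \cdot |\{|\xi|\le\delta\}|$, which is only negligible when $\delta = o(\eps^\gamma)$, at which point the truncation is vacuous; and for piecewise-constant $\xi$ (e.g.\ Rademacher walks) there is nothing to truncate at all. Accumulating per-interval homogenization errors --- which are of order $C(1+\Delta_k)\eps^{1/3}$ for a subinterval of $\tau$-length $\Delta_k$, with the $``+1"$ coming from the initial layer --- over $N \sim \eps^{-2\gamma}$ intervals gives $\eps^{1/3-2\gamma}$, exactly what the paper obtains in the multiple-noise case (Lemma~\ref{L:homogerror}), and this is precisely why that result carries the restriction $\gamma < 1/6$. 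The single-noise theorem asserts the conclusion for \emph{all} $\gamma > 0$, which your construction cannot reach.

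The paper's proof avoids this by never decomposing $\zeta^\eps$ itself. Instead it introduces an intermediate, $\eps$-independent piecewise-$C^1$ path $\eta$ with finitely many sign changes, chosen close to the limiting Brownian motion $B$, and performs a three-way comparison: (i) compare $u^\eps$ (driven by $\zeta^\eps$ with Hamiltonian $H(\cdot,x/\eps)$) to $v^\eps$ (driven by $\eta$ with the same Hamiltonian) via the path-stability estimate of Theorem~\ref{T:introstability}, with error controlled by $\|\zeta^\eps - \eta\|_\oo$; (ii) homogenize $v^\eps \to v^0$ over the $N$ monotone pieces of $\eta$, where $N$ is fixed in $\eps$, so that only the qualitative convergence of Corollary~\ref{C:uniformoverLip} is needed and no error accumulation occurs (this is Lemma~\ref{L:fixedpath}); (iii) compare $v^0$ to $\oline{u}$ (driven by $B$) via the Lions--Souganidis stability estimate of Theorem~\ref{T:LScriteria}, with error controlled by $\|\eta - B\|_\oo$. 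Because $\eta$ is fixed, neither the diverging count $N(\omega,\eps)$ nor the diverging total variation of $\zeta^\eps$ enters, and no rate for the cell-problem homogenization is required. This is what buys the full range $\gamma > 0$, and it is the decisive idea missing from your proposal. Your Steps 1 and 4 are consistent with the paper's machinery; the fix is to insert the fixed intermediate path in place of the direct decomposition of $\zeta^\eps$.
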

The convergence in probability distribution in Theorem \ref{T:introsinglenoise}, and in the subsequent results below, is understood with the topology of local-uniform convergence. See Section \ref{S:theory} below for details.

Theorem \ref{T:introsinglenoise} holds without any restrictions on the positive parameter $\gamma$, or on the correlation between the random functions $H$ and $\xi$. This has to do with regularity and stability estimates for pathwise Hamilton-Jacobi equations with convex and coercive Hamiltonians. These estimates, which are of independent interest, are presented in Appendix \ref{S:pathstability}.

\subsubsection{The multiple-noise case}

We now turn to the study of the initial value problem
\begin{equation}\label{E:intromultiplepaths}
	u^\eps_t + \frac{1}{\eps^\gamma} \sum_{i=1}^m H^i \pars{ Du^\eps, \frac{x}{\eps}}  \xi^i\pars{ \frac{t}{\eps^{2\gamma}},\omega} = 0 \quad \text{in } \RR^d \times (0,\oo) \quad \text{and} \quad u^\eps(\cdot,0) = u_0 \quad \text{in } \RR^d,
\end{equation}
where $u_0 \in UC(\RR^d)$, $m > 1$, and, for each $i = 1,2,\ldots, m$, $H^i \in C(\RR^d \times \TT^d)$ and $\xi^i: [0,\oo) \times \Omega \to \RR$ is a white noise approximation.

We will show that there exist $M \in \NN$  and, for each $j = 1,2,\ldots, M$, a continuous, deterministic, effective Hamiltonian $\oline{H}^j: \RR^d \to \RR$ and a Brownian motion $B^j$ such that, as $\eps \to 0$, $u^\eps$ converges locally uniformly and in distribution to the unique stochastic viscosity solution $\oline{u}$ of
\begin{equation}\label{E:introeffectivemultipleeq}
	d\oline{u} + \sum_{j=1}^M \oline{H}^j(D \oline u) \circ dB^j = 0 \quad \text{in } \RR^d \times (0,\oo) \quad \text{and} \quad \oline{u}(\cdot,0) = u_0 \quad \text{in } \RR^d.
\end{equation}
Despite the similarity of this statement with Theorem \ref{T:introsinglenoise}, there are some fundamental differences in the nature of the problem. Most importantly, the deterministic effective Hamiltonians, and even their number $M$, depend on the particular laws of the mixing fields.

Different types of behavior can already be seen by considering the simple problem
\begin{equation}\label{E:introdifflaws}
	u^\eps_t + \frac{1}{\eps^\gamma}|u_x|\xi^1 \pars{ \frac{t}{\eps^{2\gamma}}, \omega} + \frac{1}{\eps^\gamma} f\pars{ \frac{x}{\eps}} \xi^2 \pars{ \frac{t}{\eps^{2\gamma}} , \omega}  = 0 \quad \text{in } \RR \times (0,\oo) \quad \text{and} \quad u^\eps(\cdot,0) = u_0 \quad \text{in } \RR.
\end{equation}
In particular, the law of the limiting problem depends nontrivially on the law of the white noise approximation $(\xi^1,\xi^2)$.

\begin{theorem}\label{T:difffields}
	Assume that $f \in C^{0,1}(\TT)$, $u_0 \in UC(\RR)$, and $0 < \gamma < 1/2$. Then there exist piecewise smooth white nosie approximations
	\[
		\xi = (\xi^1,\xi^2) : [0,\oo) \times \Omega \to \RR^2
		\quad \text{and} \quad
		\tilde \xi = (\tilde \xi^1,\tilde \xi^2): [0,\oo) \times \Omega \to \RR^2,
	\]
	deterministic functions $\oline{\mbf H}\in C( \RR, \RR^2)$ and $\widetilde{\oline{\mbf H}} \in C(\RR, \RR^4)$, and Brownian motions $\mbf B: [0,\oo) \times \Omega \to \RR^2$ and $\mbf{\tilde B}: [0,\oo) \times \Omega \to \RR^4$ such that, if $u^\eps$ and $\tilde u^\eps$ are the solutions of \eqref{E:introdifflaws} with respectively the fields $\xi$ and $\tilde \xi$, then
	\[
		\lim_{\eps \to 0} u^\eps = \oline{u} \quad \text{and} \quad \lim_{\eps \to 0} \tilde u^\eps = \widetilde{\oline{u}} \quad \text{in distribution},
	\]
	where $\oline{u}$ and $\widetilde{\oline{u}}$ are the unique stochastic viscosity solutions of respectively
	\[
		d \oline{u} + \oline{\mbf H}(\oline{u}_x) \circ d \mbf B = 0 \quad \text{and} \quad	d \widetilde{\oline{u}} + \widetilde{\oline{\mbf H}}(\widetilde{\oline{u}}_x) \circ d \mbf{\tilde B} = 0 \quad \text{in } \RR \times (0,\oo)
	\]
	with $\oline{u}(\cdot,0) = \widetilde{\oline{u}}(\cdot,0) = u_0$ in $\RR$. Moreover, as $C(\RR \times [0,\oo))$-valued random variables, $\oline{u}$ and $\widetilde{\oline{u}}$ have different laws for general $u_0 \in UC(\RR^d)$.
\end{theorem}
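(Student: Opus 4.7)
The plan is to exploit the fact that the periodic HJ cell problem for the frozen Hamiltonian $\alpha|p|+\beta f(y)$ depends nontrivially on the sign pair $(\sgn\alpha,\sgn\beta)$, and to choose the two noise pairs so that the joint sign process visits only two versus all four sign quadrants of $\RR^2$. Concretely, let $\zeta:[0,\oo)\times\Omega\to\RR$ be a centered piecewise smooth mixing process (for example, $\zeta(t)=\eta_{\lfloor t\rfloor}$ with $(\eta_n)_{n\in\NN}$ i.i.d.\ symmetric $\pm 1$ variables) such that $\eps^{-\gamma}\zeta(\cdot/\eps^{2\gamma})$ approximates white noise. I take $\xi^1=\xi^2=\zeta$, so that $(\xi^1,\xi^2)$ is supported on the diagonal and visits only the sign regimes $(+,+)$ and $(-,-)$, and I take $\tilde\xi^1,\tilde\xi^2$ to be independent copies of $\zeta$, so that $(\tilde\xi^1,\tilde\xi^2)$ visits all four sign quadrants of $\RR^2$ with equal frequency.

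Following the multi-noise homogenization scheme leading to \eqref{E:introeffectivemultipleeq}, I freeze the joint noise on a fast time interval at value $(\alpha,\beta)$ and solve the periodic HJ cell problem in $y=x/\eps$ for $\alpha|p|+\beta f(y)$. This is a 1D convex (resp.\ concave) cell problem in $p$ according to $\sgn\alpha$, and the 1D Lax--Oleinik formula produces a continuous effective Hamiltonian $\oline H(p;\alpha,\beta)$, homogeneous of degree one under positive rescalings of $(\alpha,\beta)$ but genuinely asymmetric under sign changes. Specializing to $(\alpha,\beta)=(\sigma,\tau)\in\{+,-\}^2$ yields four continuous functions $\oline H(\cdot;\sigma,\tau)$ that are pairwise distinct whenever $f$ is non-constant: indeed, for $|p|$ large enough that the cell corrector is monotone, they reduce to the four combinations $\sigma|p|+\sigma\tau\bar f$, distinct whenever $\bar f\neq 0$. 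Consequently, in the $\xi$ setup only two effective Hamiltonians arise, giving $\oline{\mbf H}=(\oline H(\cdot;+,+),\oline H(\cdot;-,-))\in C(\RR,\RR^2)$, while in the $\tilde\xi$ setup all four arise, giving $\widetilde{\oline{\mbf H}}=(\oline H(\cdot;\sigma,\tau))_{\sigma,\tau\in\{+,-\}}\in C(\RR,\RR^4)$.

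The Brownian motions driving the limits arise via a multi-dimensional invariance principle as the joint scaling limits of the time integrals of the noise vector restricted to each sign sector. Tightness of $u^\eps$ in $C(\RR\times[0,\oo))$ follows from the path-stability estimates of Appendix~\ref{S:pathstability}, and identification of the limit as the pathwise viscosity solution of \eqref{E:introeffectivemultipleeq} is carried out within the Lions--Souganidis framework using the cell-problem correctors sector by sector. To conclude that $\oline u$ and $\widetilde{\oline u}$ have different laws for generic $u_0$, I specialize to an affine initial condition $u_0(x)=px$ with $|p|$ sufficiently large: then $\oline u(0,t)$ is a Stratonovich combination of two Brownian motions with only two distinct effective coefficients, while $\widetilde{\oline u}(0,t)$ involves four distinct effective coefficients paired against a four-dimensional Brownian motion, and a direct second moment computation shows that the two distributions differ whenever $\bar f\neq 0$.

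The main obstacle is the rigorous implementation of the sign-sector invariance principle for the joint noise vector --- which requires careful bookkeeping of the correlation structure among the sector-restricted integrated noises and their limiting multivariate Brownian covariance --- together with the justification, via the pathwise viscosity theory, that the four effective Hamiltonians computed sector by sector correctly combine into the limiting stochastic HJ equation in each quadrant.
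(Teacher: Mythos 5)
Your construction has a fundamental error: you have conflated the number of sign quadrants visited by the joint noise with the number of Brownian motions in the scaling limit, and these are not the same. The correct count, which drives Theorem~\ref{T:generalmultpaths}, is the number of \emph{odd} Walsh--Fourier modes of the effective Hamiltonian $\oline{H}(p,\cdot)$ on $\{-1,1\}^m$, which by Lemma~\ref{L:decomp} is $2^{m-1}$ when the $m$ Rademacher components are mutually independent. Both of your proposed pairs fail to produce the dimensions claimed. For $\xi^1=\xi^2=\zeta$ the two components are perfectly correlated, so you really have a single scalar noise; the frozen Hamiltonian is $\pm(|p|+f(y))$, which is convex and coercive, and by the consistency identity $\oline{(-H)}=-\oline{H}$ the effective Hamiltonian is odd, so the limit is driven by a \emph{single} Brownian motion (this is exactly the single-noise setting of Theorem~\ref{T:singlenoise}), not a genuine $\RR^2$-valued one. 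For $\tilde\xi^1,\tilde\xi^2$ independent Rademacher copies, you are in the $m=2$ case of Theorem~\ref{T:generalmultpaths}: $\mcl A^2_o=\{\{1\},\{2\}\}$ has $2^{2-1}=2$ elements (the even-parity coefficient $\oline{H}^{\{1,2\}}$ vanishes by oddness of $\oline{H}$), so the limit is driven by \emph{two} Brownian motions, not four. In short, visiting all four sign quadrants does not produce four independent noise directions, because the function $\xi\mapsto\oline H(p,\xi)$ is odd and hence lives in a $2$-dimensional subspace of functions on $\{-1,1\}^2$.

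The mechanism the paper actually uses is different and subtler. The first pair $\xi=(\xi^1,\xi^2)$ is taken to be two \emph{independent} Rademacher white-noise approximations, giving the $2$-dimensional limit \eqref{E:twopathslimit} via the $m=2$ case of Theorem~\ref{T:generalmultpaths} (with $\oline H^1,\oline H^2$ computed explicitly from Lemma~\ref{L:simplecell} in subsection~\ref{SS:1dexample}). For the second pair, $\tilde\xi^1$ is still Rademacher, but $\tilde\xi^2$ is given a symmetric \emph{four-point} law on $\{\pm a,\pm b\}$, realised as $X^2_k:=\tfrac{a+b}{2}Y_k+\tfrac{a-b}{2}Z_k$ with $Y_k,Z_k$ independent Rademacher and $a^2+b^2=2$. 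This keeps $\tilde\xi^1,\tilde\xi^2$ independent and normalised, so $(\tilde\xi^1,\tilde\xi^2)$ is a legitimate $2$-component white-noise approximation, but it lifts \eqref{E:introdifflaws} to an $m=3$ Rademacher problem \eqref{E:diffonedexample} in the underlying $(X,Y,Z)$, for which Theorem~\ref{T:generalmultpaths} produces $2^{3-1}=4$ Brownian motions. The distinctness of the resulting laws is then checked directly from the explicit formulae for $\oline H^1,\oline H^2$ and $\oline H^{\{1\}},\oline H^{\{2\}},\oline H^{\{3\}},\oline H^{\{1,2,3\}}$, including already for affine initial data. Your outline of the tightness and identification steps (path stability from Appendix~\ref{S:pathstability}, the invariance principle for the integrated noises) is fine, but without the four-point law for $\tilde\xi^2$ there is no way to make the number of effective Brownian motions differ between the two setups.
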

In the above result, $\xi$ and $\tilde \xi$ are certain discrete examples satisfying \eqref{A:introxi}, but with $\tilde \xi$ taking on more values than $\xi$. It is this property that results in a more complicated limiting Hamiltonian and a higher dimensional noise.

The next result involves a slight modification of \eqref{E:introdifflaws}, which nevertheless leads to still more varied limiting phenomena. In particular, the next result demonstrates that nontrivial correlation between the fields $\xi^i$ in \eqref{E:intromultiplepaths} can create ballistic behavior.

\begin{theorem}\label{T:nonconvex}
For some $V \in C(\TT)$, $F \in C(\RR)$, and independent white noise approximations $\xi^1,\xi^2:[0,\oo) \times \Omega \to \RR$, the following hold:
\begin{enumerate}[(a)]
\item There exists a deterministic $\oline{\mbf H}\in C(\RR, \RR^3)$ and a Brownian motion $\mbf B: [0,\oo) \times \Omega \to \RR^3$ such that, if $0 < \gamma < 1/6$, $u_0 \in UC(\RR)$, and $u^\eps$ solves 
\[
	u^\eps_t + \frac{1}{\eps^\gamma} F(u_x^\eps) \xi^1\pars{ \frac{t}{\eps^{2\gamma}} ,\omega} + \frac{1}{\eps^\gamma}V\pars{ \frac{x}{\eps}} \xi^2 \pars{ \frac{t}{\eps^{2\gamma} } ,\omega} = 0 \quad \text{in } \RR \times (0,\oo) \quad \text{and} \quad u^\eps(\cdot,0) = u_0 \quad \text{in } \RR,
\]
then, as $\eps \to 0$, $u^\eps$ converges in distribution to the unique stochastic viscosity solution of
\[
	d \oline{u} + \oline{ \mbf{H}}(\oline{u}_x) \circ d \mbf B = 0 \quad \text{in } \RR \times (0,\oo) \quad \text{and} \quad \oline{u}(\cdot,0) = u_0 \quad \text{in } \RR.
\]

\item There exists $p \in \RR$ and a deterministic, nonzero constant $\oline{c} \ne 0$ such that, if $0 < \gamma < 1$ and $\tilde u^\eps$ is the solution of 
\[
	\tilde u^\eps_t + \frac{1}{\eps^\gamma}  F(\tilde u_x^\eps)\xi^1 \pars{ \frac{t}{\eps^{2\gamma}} ,\omega} + \frac{1}{\eps^\gamma}V\pars{ \frac{x}{\eps}}  \xi^1 \pars{ \frac{t}{\eps^{2\gamma}},\omega} = 0 \quad \text{in } \RR \times (0,\oo) \quad \text{and} \quad \tilde u^\eps(x,0) = p \cdot x \quad \text{in } \RR,
\]
then, with probability one, for all $T > 0$,
\[
	\lim_{\eps \to 0} \sup_{(x,t) \in \RR \times [0,T]} \abs{ \eps^\gamma u^\eps(x,t) - \oline{c} t } = 0.
\]
\end{enumerate}
\end{theorem}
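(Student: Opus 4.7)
The proof splits naturally into the two parts. For both, the baseline step is tightness of the families $\{u^\eps\}$ and $\{\tilde u^\eps\}$ in $C_{\loc}(\RR \times [0,\oo))$, obtained from the Lipschitz-in-$x$ and modulus-of-continuity-in-$t$ estimates of Appendix \ref{S:pathstability}, together with the joint convergence in law of $t \mapsto \eps^{-\gamma}\int_0^t \xi^i(s/\eps^{2\gamma})\,ds$ to a two-dimensional Brownian motion $(B^1, B^2)$ provided by the mixing hypothesis.

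For part (a), I would exploit the independence of $\xi^1$ and $\xi^2$ via a conditioning argument that decouples the time layers on which each noise has a definite sign. Between sign changes of $\xi^1$, the first term drives a time-homogeneous HJ equation with a large coefficient, and the long-time relaxation for nonconvex Hamiltonians forces the solution toward the convex envelope $F^{**}$ (when $\xi^1 > 0$) or the concave envelope of $F$ (when $\xi^1 < 0$); each couples to one of two independent Brownian motions $B^{1,+}$ and $B^{1,-}$ arising as the scaling limits of the positive and negative parts of $\eps^{-\gamma}\xi^1(\cdot/\eps^{2\gamma})$. The $V(x/\eps)\xi^2(t/\eps^{2\gamma})$ term, being independent of $\xi^1$ and involving only $V \in C(\TT)$, contributes a single additional effective Hamiltonian (essentially the constant equal to the spatial mean of $V$) driven by $B^2$, for a total of $M = 3$. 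Identification of the limit then follows by passing to the limit in variational representations adapted to the piecewise-smooth structure of $\xi$, applying the stochastic viscosity uniqueness of Section \ref{S:theory}.

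For part (b), the collapse to a single noise $\xi^1 = \xi^2$, combined with the linear initial datum $\tilde u^\eps(\cdot,0) = p \cdot x$, reduces the equation to $\tilde u^\eps_t + \eps^{-\gamma} G(\tilde u^\eps_x, x/\eps)\xi^1(t/\eps^{2\gamma}) = 0$ with $G(q,y) := F(q) + V(y)$. I would seek a solution of the form $\tilde u^\eps(x,t) = \oline c\, t/\eps^\gamma + r^\eps(x,t)$ with $r^\eps$ bounded uniformly in $\eps$, and, balancing orders of $\eps^{-\gamma}$, derive a cell-type problem whose effective constant is $\oline c$. The key mechanism is that for nonconvex $F$, the relaxation of $G$ over mesoscopic time intervals produces the convex envelope of $G(\cdot, y)$ when $\xi^1 > 0$ and the concave envelope when $\xi^1 < 0$; these envelopes have different spatial averages at the chosen $p$, so their contributions to $\oline c$ do not cancel despite the centering of $\xi^1$, and one can select $p$, $F$, and $V$ to enforce $\oline c \ne 0$, establishing the ballistic assertion.

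The principal obstacle is the absence of convexity of $F$, which prevents direct application of the pathwise stability results underlying Theorem \ref{T:introsinglenoise}. In part (a), this requires a careful envelope decomposition with cutoff arguments near the sign changes of $\xi^1$, and the constraint $\gamma < 1/6$ ensures that the spatial cell problem relaxes on a time scale shorter than that of the temporal fluctuations so that the two-scale averaging is well-separated. In part (b), only temporal averaging is involved, so the weaker constraint $\gamma < 1$ suffices, but the rigorous identification of $\oline c$ still requires working explicitly with the piecewise-smooth structure of $\xi^1$ to justify the rectification effect and to control the remainder $r^\eps$ uniformly in $\eps$.
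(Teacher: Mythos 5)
Your high-level scaffolding (tightness from path stability, decoupled time layers, passing to the limit in piecewise-smooth form) is in the right spirit, but both of the mechanisms you invoke are wrong, and the decomposition of the noise would not produce Brownian limits.

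\textbf{Part (a): envelope relaxation is the wrong mechanism, and $\xi^{1}_{\pm}$ do not yield Brownian motions.} The effective behavior here is \emph{not} the replacement of $F$ by its convex envelope $F^{**}$ when $\xi^1 > 0$ and by the concave envelope when $\xi^1 < 0$. What governs the limit is the standard periodic cell problem for the full Hamiltonian $(p,y) \mapsto F(p)\xi^1 + V(y)\xi^2$ for each of the four sign vectors $(\xi^1,\xi^2) \in \{-1,1\}^2$; since this Hamiltonian is coercive (positively or negatively) for each fixed sign vector, Theorem \ref{T:periodicfacts} produces four effective constants $\oline{H}(p,\xi^1,\xi^2)$, and the nonconvexity of $F$ combined with the crookedness of $V=V_s$ is exactly what makes these four values asymmetric. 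The decomposition of $\oline{H}(\cdot,\xi^1,\xi^2)$ into three driving paths then comes from the Fourier--Walsh expansion of a function on $\{-1,1\}^2$ (Lemma \ref{L:decomp}), not from splitting $\xi^1$ into its positive and negative parts. Your proposed processes $\eps^{-\gamma}\xi^{1,\pm}(\cdot/\eps^{2\gamma})$ are not centered ($\EE[\xi^{1}_+] = \tfrac12 \ne 0$), so their time integrals blow up like $\eps^{-\gamma} t$ rather than converging to Brownian motion; Donsker's principle applies instead to $\xi^1$, $\xi^2$, and the product $\xi^1\xi^2$, which remain centered and pairwise independent Rademacher by Lemma \ref{L:feeltheBern}. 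The constraint $\gamma < 1/6$ then enters quantitatively through the $O(\eps^{1/3})$ homogenization rate of Theorem \ref{T:periodicerrorest}(a), giving an error $\eps^{1/3-2\gamma}$ between $u^\eps$ and the intermediate solution driven by the scaled random walks; your separation-of-time-scales heuristic points in the right direction but does not substitute for this estimate. Also note that the third Walsh coefficient $\oline{H}^{\{1,2\}}(p)$ is genuinely $p$-dependent, so the contribution from $V\xi^2$ is not ``essentially the spatial mean of $V$ driven by $B^2$''; the spatial mean gives only the constant $\oline{H}^2 = \tfrac12$, while the interaction term $\oline{H}^{\{1,2\}} = \tfrac12(\oline{H}_{1-s}-\oline{H}_s)$ carries the nonconvex information.

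\textbf{Part (b): the ballistic drift comes from the failure of the consistency relation $\oline{(-H)} = -\oline{H}$, not from envelopes.} With $\xi^1 = \xi^2 = \xi$ and linear data $p\cdot x$, the relevant comparison is between the cell-problem Hamiltonians for $\pm(F(p)-V_s(y))$: $\oline{H}_s(p)$ when $\xi=1$ and $\oline{(-H_s)}(p) = -\oline{H}_{1-s}(p)$ when $\xi=-1$. These do not cancel because $\oline{H}_s \ne \oline{H}_{1-s}$ for $s \ne 1/2$ (this is the key fact from \cite{LTY}, made explicit in Appendix \ref{S:effective}); the resulting drift is $\oline c = \tfrac12(\oline{H}_{1-s}(p) - \oline{H}_s(p))$ by the law of large numbers. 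The relaxation is to the periodic-homogenization effective Hamiltonian, not to the convex or concave envelope of $G(\cdot,y)$; indeed the explicit ``plateau'' structure of $\oline{H}_s$ in Proposition \ref{P:Hsshape} is not the convex envelope of $F$ shifted by $\langle V_s\rangle$. Your claim that the envelopes ``have different spatial averages at the chosen $p$'' is therefore not the operative mechanism; you need the explicit asymmetry of the corrector problem under $V_s \mapsto -V_s$ (equivalently $s \mapsto 1-s$). Finally, the improved range $\gamma < 1$ does not come from the fact that ``only temporal averaging is involved'' — both spatial homogenization and temporal averaging occur — but from the linear initial datum, which upgrades the homogenization error from $O(\eps^{1/3})$ to $O(\eps)$, so that $\eps^\gamma|u^\eps - \oline{u}^\eps| = O(\eps^{1-\gamma})$ vanishes for all $\gamma < 1$.
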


Note that the limiting Hamiltonian and noise in Theorem \ref{T:nonconvex}(a) are three- rather than two-dimensional. This is a consequence of the nonconvexity of $F$, which, as it turns out, causes certain non-symmetric properties of the potential $V$ to have an effect on the limiting problem, namely, increasing the dimension of both $\oline{\mbf H}$ and $\mbf B$. 

Also, if $F: \RR \to \RR$ is convex and coercive, then the hypotheses in \eqref{A:introsinglenoise} are satisfied by the Hamiltonian $H(p,x) := F(p) + V(x)$ and the field $\xi^i$. Hence, the example in Theorem \ref{T:nonconvex}(b), for which the function $F$ is necessarily non-convex, illustrates that the convexity assumption in Theorem \ref{T:introsinglenoise} is necessary in general.

Finally, we describe a result concerning the first order, level set problem
\begin{equation}\label{E:introgenerallevelset}
	u^\eps_t + \frac{1}{\eps^\gamma} A\pars{ \frac{x}{\eps}, \frac{t}{\eps^{2\gamma}} ,\omega} |Du^\eps| = 0 \quad \text{in } \RR^d \times (0,\oo) \quad \text{and} \quad u^\eps(\cdot,0) = u_0 \quad \text{in } \RR^d,
\end{equation}
where
\begin{equation} \label{A:intromultiplefrontspeeds}
	\left\{
		\begin{split}
			&A(y,t,\omega) := \sum_{i=1}^m a^i(y) \xi^i(t,\omega), \\
			&\pars{ (\xi^1,\xi^2,\ldots,\xi^m)([k,k+1),\cdot) }_{k=0}^\oo \text{ are independent and uniformly distributed over } \{-1,1\}^m, 			
			\\
			&a^i \in C^{0,1}(\TT^d) \quad \text{for all } i = 1,2,\ldots,m, \quad \text{and} \quad \sum_{i=1}^m a^i y^i \ne 0 \text{ whenever } y^i \in \{-1,1\}.
		\end{split}
	\right.
\end{equation}
A more general result, which covers Theorem \ref{T:introlevelset} below, will be proved in Section \ref{S:multiplenoise}. Once more, the nonlinear homogenization causes interactions between the various noise coefficients that increases the dimension of the noise, in this case from $m$ to $2^{m-1}$.

\begin{theorem}\label{T:introlevelset}
	Assume that $0 < \gamma < 1/6$, $u_0 \in UC(\RR^d)$, and \eqref{A:intromultiplefrontspeeds} holds. Then there exists $\oline{\mbf A} \in C\pars{S^{d-1}, \RR^{2^{m-1}} }$ and a Brownian motion $\mbf B: [0,\oo) \times \Omega \to \RR^{2^{m-1}}$ such that, as $\eps \to 0$, the solution $u^\eps$ of \eqref{E:introgenerallevelset} converges in distribution to the stochastic viscosity solution $\oline{u}$ of
	\[
	d\oline{u} + \oline{\mbf A} \pars{ \frac{D \oline{u}^\eps}{ \abs{ D \oline{u}^\eps} } } \abs{ D \oline{u}^\eps} \circ d \mbf B = 0 \quad \text{in } \RR^d \times (0,\oo) \quad \text{and} \quad \oline{u}(\cdot,0) = u_0 \quad \text{in } \RR^d.
\]
\end{theorem}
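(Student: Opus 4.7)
The plan is to decompose the problem into two layers: a spatial periodic homogenization within each mesoscopic time epoch on which the noise signals are constant, followed by a central limit theorem across epochs to produce the driving Brownian motions, and finally a pathwise stability step that transfers PDE convergence from piecewise-constant drivers to white noise.

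\textbf{Per-epoch reduction.} Under \eqref{A:intromultiplefrontspeeds}, on each unit interval $[k,k{+}1)$ the vector $(\xi^1,\ldots,\xi^m)$ equals a fixed random value $\sigma_k \in \{-1,1\}^m$, so after time rescaling the problem \eqref{E:introgenerallevelset} reduces, on each mesoscopic interval $I^\eps_k := [k\eps^{2\gamma}, (k{+}1)\eps^{2\gamma})$, to
\[
u^\eps_t + \eps^{-\gamma}\, A_{\sigma_k}\!\pars{\tfrac{x}{\eps}}\,|Du^\eps| = 0 \quad \text{in } \RR^d \times I^\eps_k, \qquad A_\sigma(y) := \sum_{i=1}^m a^i(y)\,\sigma^i.
\]
The nonvanishing hypothesis in \eqref{A:intromultiplefrontspeeds} forces each $A_\sigma$ to be of a single sign on $\TT^d$, so within each epoch the Hamiltonian $A_{\sigma_k}(y)|p|$ is either convex coercive or the negative of such an object. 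Applying the single-noise periodic homogenization result (Theorem \ref{T:introsinglenoise} in the convex case, and its concave counterpart by sign reversal) produces a deterministic level-set effective Hamiltonian $\oline{A}_\sigma(p/|p|)|p|$, and the identity $A_{-\sigma} \equiv -A_\sigma$ is preserved by homogenization: $\oline{A}_{-\sigma} \equiv -\oline{A}_\sigma$.

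\textbf{Counting, pairing, and CLT.} Group the $2^m$ sign patterns into $2^{m-1}$ classes $\{\sigma,-\sigma\}$ and fix a transversal $\Sigma \subset \{-1,1\}^m$. For $\sigma \in \Sigma$ define the signed and rescaled count
\[
N^\eps_\sigma(t) := \#\{k\in\NN : k\eps^{2\gamma} \le t, \ \sigma_k = \sigma\}, \qquad M^\eps_\sigma(t) := \eps^{\gamma}\bigl(N^\eps_\sigma(t) - N^\eps_{-\sigma}(t)\bigr).
\]
Because the $\sigma_k$ are i.i.d.\ uniform on $\{-1,1\}^m$ and the natural step-size is $\eps^{2\gamma}$, Donsker's theorem applied to the $\RR^\Sigma$-valued process yields joint convergence in law $M^\eps \Rightarrow \mbf B$, where $\mbf B$ is a Brownian motion of dimension $2^{m-1}$ whose constant covariance matrix can be diagonalized by a linear change of variables; the resulting independent components $B^j$ and the transformed vector $\oline{\mbf A}$ are exactly the ones in the statement.

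\textbf{Passage to the limit in the PDE.} Replacing each per-epoch Hamiltonian by its homogenized version produces an auxiliary equation driven by the piecewise-constant noise $\eps^{-\gamma}\sum_{\sigma\in\Sigma}(\ind_{\sigma_k=\sigma}-\ind_{\sigma_k=-\sigma})\oline{A}_\sigma(p/|p|)|p|$. After integration the accumulated signed driver on $[0,t]$ is exactly $M^\eps_\sigma(t)$, while the drift contributions from the paired classes cancel because $\oline{A}_{-\sigma}\equiv-\oline{A}_\sigma$. The Lions–Souganidis pathwise stability theory, combined with the uniform regularity estimates for convex-coercive stochastic Hamilton–Jacobi equations proved in Appendix \ref{S:pathstability}, then allows one to pass to the limit: the auxiliary solutions converge to the unique pathwise viscosity solution of \eqref{E:introeffectivefronts} driven by $\oline{\mbf A}$ and $\mbf B$, and the same limit is identified with $\lim_{\eps\to 0} u^\eps$ along subsequences, hence in distribution.

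\textbf{Main obstacle.} The principal technical difficulty, and the source of the restriction $\gamma<1/6$, is quantifying the two asymptotic regimes simultaneously. The periodic corrector for the operator $\eps^{-\gamma}A_\sigma(x/\eps)|Du|$ has an error per epoch of some power $\eps^{\alpha}$, and this error must be summable across the roughly $t\eps^{-2\gamma}$ epochs while remaining compatible with the $\eps^{\gamma}$-size CLT fluctuations captured by $M^\eps_\sigma$. Balancing these rates — and simultaneously maintaining uniform-in-$\eps$ Lipschitz control on $u^\eps$ via the path-regularity estimates of Appendix \ref{S:pathstability}, which is what allows the single-noise homogenization to be applied in each epoch with uniform constants — is the delicate quantitative heart of the argument; the structural ingredients (single-noise homogenization, Donsker, pathwise stability) are applications of results established earlier in the paper.
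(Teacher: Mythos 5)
Your proposal is essentially correct and tracks the route the paper actually takes, namely reducing Theorem \ref{T:introlevelset} to the general multi-noise result (Theorem \ref{T:generalmultpaths}): per-epoch periodic homogenization with a quantitative error (Lemma \ref{L:homogerror}, via Theorem \ref{T:periodicerrorest}), Donsker's theorem to obtain the $2^{m-1}$-dimensional Brownian motion, and Lions--Souganidis pathwise stability plus Slutsky to transfer the convergence. Where you genuinely diverge is in parametrizing the limiting noise: you pair up the $2^m$ sign patterns into $\{\sigma,-\sigma\}$ and track signed occurrence counts, while the paper uses the Walsh decomposition of functions on $\{-1,1\}^m$ (Lemma \ref{L:decomp}), obtaining effective Hamiltonians $\oline{H}^{\mbf j}$ indexed by odd subsets $\mbf j \in \mcl A^m_o$ and pairwise-independent Rademacher products $X^{\mbf j}$ (Lemma \ref{L:feeltheBern}). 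Both produce a diagonal covariance and hence $2^{m-1}$ independent Brownian motions; the paper's Fourier decomposition is algebraically cleaner because it carries through verbatim for the general convex/concave $H^i$'s of \eqref{A:Hs}, not just the level-set case, and makes the odd/even cancellation transparent. Two expository slips worth correcting: (i) the per-epoch tool is not Theorem \ref{T:introsinglenoise} (a joint space--time limit theorem) but rather the quantitative periodic homogenization estimate of Theorem \ref{T:periodicerrorest}, which is exactly what produces the $\eps^{1/3-2\gamma}$ error (or $\eps^{1-2\gamma}$ when $d\le 2$) and hence the $\gamma<1/6$ (or $\gamma<1/2$) constraint; and (ii) the assertion that $A_{-\sigma}\equiv -A_\sigma$ ``is preserved by homogenization'' is precisely the nontrivial consistency identity \eqref{consistentH}, i.e.\ $\oline{(-H)}=-\oline{H}$, which does \emph{not} hold for arbitrary coercive Hamiltonians but is proved in \cite{Se} (cf.\ Theorem \ref{T:periodicfacts}) under the convexity/concavity granted here by the sign condition in \eqref{A:intromultiplefrontspeeds}; you should name that fact rather than treat it as automatic.
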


Recall that \eqref{E:introgenerallevelset} is the level set equation for a hypersurface evolving according to the normal velocity $-\eps^{-\gamma} A(x/\eps, t/\eps^{2\gamma})$, and the limiting equation in Theorem \ref{T:introlevelset} corresponds to the level-set flow with the normal velocity $d\mathscr{B}(n,t,\omega)$, where
\[
	\mathscr B(n,t,\omega) = \oline{\mbf A} (n) \cdot \mbf B(t,\omega).
\]

\subsection{Organization of the paper}

Section \ref{S:theory} contains some tools and concepts that are used throughout the paper. The results from the single-noise and multiple-noise cases are proved in respectively Sections \ref{S:singlenoise} and \ref{S:multiplenoise}. Finally, the appendix summarizes relevant aspects of the pathwise viscosity solution theory, as well as the computation of a certain effective Hamiltonian.

\subsection{Notation} Throughout, integration with respect to the probability measure $\PP$ is denoted by $\EE$. For a domain $U \in \RR^N$, $(B)UC(U)$ is the space of (bounded) uniformly continuous functions on $U$, and $C^2_b(U)$ is the space of $C^2$ functions $f$ whose Hessian $D^2 f$ is uniformly bounded. For $H: \RR^d \to \RR$, $H^*$ is the Legendre transform of $H$. Given a set $A$, the function $\mbf 1_A$ is the indicator function of $A$. For a function $f: \RR \to \RR$ and $x_0 \in \RR$, we write $f(x_0^{\pm}) := \lim_{h \to 0, h > 0} f(x_0 \pm h)$ whenever the limit exists. The identity matrix is denoted by $\Id$. The $(d-1)$-dimensional unit sphere in $\RR^d$ is $S^{d-1}$, and the $d$-dimensional torus is $\TT^d$. When $d = 1$, we write $\TT^1 = \TT$.

\section{White noise approximations and convergence in distribution}\label{S:theory}

Throughout the paper, we use certain facts about random variables converging in distribution. More details and proofs can be found in the book of Billingsley \cite{Bill}.

Given a Polish space $\mcl A$, that is, a complete and separable metric space, a sequence of Borel probability measures $(\mu_n)_{n\ge 1}$ on $\mcl A$ is said to converge weakly to $\mu$ as $n \to \oo$ if
\[
	\lim_{n \to \oo} \int_{\mcl A} f \; d\mu_n = \int_{\mcl A} f \; d\mu \quad \text{for all } f \in C_b(\mcl A).
\]
A sequence of $\mcl A$-valued random variables $(X_n)_{n\ge 1}$ (not necessarily defined on the same probability space) is said to converge in distribution to $X$ in the space $\mcl A$, as $n \to \oo$, if the sequence of probability laws of the $X_n$'s converges weakly to the probability law of $X$.

%The following lemma is a specific equivalence from the classical Portmanteau theorem.
%
%\begin{lemma}\label{L:port}
%	Let $(\mu_n)_{n\ge 1}$ and $\mu$ be Borel probability measures on $\mcl A$. Then $\mu_n$ converges weakly to $\mu$ as $n \to \oo$ if and only if
%	\[
%		\liminf_{n \to \oo} \mu_n(G) \ge \mu(G) \quad \text{for all open sets } G \subset \mcl A.
%	\]
%\end{lemma}
%
%We also make use of the following Mapping Theorem. Recall that, for Polish spaces $\mcl A$ and $\mcl B$, a Borel measurable map $f: \mcl A \to \mcl B$, and a Borel measure $\mu$ on $\mcl A$, the Borel measure $\nu := f^\sharp \mu$ is defined by
%\[
%	\nu(B) := \mu(f^{-1} (B)) \quad \text{for Borel sets } B \subset \mcl B.
%\]
%
%\begin{lemma}\label{L:mapping}
%	Let $\mcl A$ and $\mcl B$ be complete separable metric spaces, and assume $f: \mcl A \to \mcl B$ is continuous. If $(\mu_n)_{n \ge 1}$ is a sequence of Borel probability measures on $\mcl A$ that, as $n \to \oo$, converge weakly to some $\mu$, then the measures $\nu_n := f^\sharp \mu_n$ converge weakly, as $n \to \oo$, to $\nu := f^\sharp \mu$.
%\end{lemma}
%
%The following result, known as Slutsky's Theorem, is a useful way to compare two sequences of probability measures converging in distribution.
%
%\begin{lemma}\label{L:slutsky}
%	Let $(X_n, Y_n)_{n \ge 1}$ be two sequences of $\mcl A$-valued random variables. Assume that, for some $\mcl A$-valued random variable $X$, as $n \to \oo$, $X_n$ converges in distribution to $X$ and $X_n - Y_n$ converges in probability to $0$. Then, as $n \to \oo$, $Y_n$ converges in distribution to $X$.
%\end{lemma}
%
In this paper, we focus mainly on the two spaces $C(\RR^d \times [0,\oo))$ and $C([0,\oo), \RR^M)$, which are endowed with the topology of local-uniform convergence. These spaces are metrizable with the metrics
\[
	d_s(u,v) := \sum_{k=1}^\oo \max\pars{  \max_{(x,t) \in B_k \times [0,k]} \abs{ u(x,t) - v(x,t)} , 2^{-k}} \quad \text{for } u,v \in C(\RR^d \times [0,\oo))
\]
and
\[
	d_p(\eta,\zeta) := \sum_{k=1}^\oo \max\pars{  \max_{t \in [0,k]} \abs{ \eta(t) - \zeta(t)} , 2^{-k}} \quad \text{for } \eta,\zeta \in C([0,\oo), \RR^M).
\]
For the product space, we use the metric
\[
	\mathrm{d}((u,\eta), (v,\zeta)) := d_s(u,v) + d_p(\eta,\zeta) \quad \text{for } u,v \in C(\RR^d \times [0,\oo)) \text{ and } \eta,\zeta \in C([0,\oo),\RR^M).
\]
Throughout the paper, random variables that take values in these spaces and converge in distribution are said to converge ``locally uniformly and in distribution.''

We call a random field $\xi: [0,\oo) \times \Omega \to \RR$ a white noise approximation if
\begin{equation}\label{A:xi}
	\left\{
	\begin{split}
	&t \mapsto \xi(t,\cdot) \text{ is piecewise continuous with $\PP$-probability one and}\\
	&\zeta^{\delta} \xrightarrow{\delta \to 0} B \text{ in distribution in }C([0,\oo),\RR), \text{ where}\\
	&\zeta^{\delta}(t,\cdot) := \delta \int_0^{t/\delta} \xi(s,\cdot)\;ds \text{ and }
	B: [0,\oo) \times \Omega \to \RR \text{ is a standard Brownian motion.}
	\end{split}
	\right.
\end{equation}
A random field $\xi$ satisfies \eqref{A:xi} if it is centered, stationary, and sufficiently mixing. Such fields have been studied by a variety of authors in the context of stochastic ordinary differential equations with mixing coefficients, for example, Cogburn, Hersh, and Kac \cite{CH}, Khasminskii \cite{Khas}, Papanicolaou and Varadhan \cite{PV}, and Papanicolaou and Kohler \cite{KP}. 

An example of the types of fields appearing in the above works follows. We define the mixing rate $\rho: [0,\oo) \to [0,\oo)$ associated to $\xi$ by
\begin{equation}\label{mixingrate}
	\rho(t) = \sup_{s \ge 0} \sup_{A \in \mbb F_{s+t, \oo}} \sup_{ B \in \mbb F_{0,s} } \abs{ \PP(A \mid B) - \PP(A)} \quad \text{for } t \ge 0,
\end{equation}
where, for $0 \le s \le t \le \oo$, $\mbb F_{s,t} \subset \mbb F$ is the $\sigma$-algebra generated by the maps $\pars{\omega \mapsto \xi(r, \omega)}_{r \in [s,t]}$. The field $\xi$ can then be shown to satisfy \eqref{A:xi} if
\begin{equation}\label{A:specificxi}
	\left\{
	\begin{split}
	&t \mapsto \xi(t,\omega) \quad \text{is stationary,}\\
	&\PP \pars{ \sup_{t \in [0,\oo)} \abs{ \xi(t,\cdot)} \le M} = 1 \quad \text{for some $M > 0$,} \\
	&\lim_{t \to \oo} \rho(t) = 0, \quad \int_0^\oo \left[ \rho(t) \right]^{1/2} dt < \oo, \\
	&\EE[\xi(0)] = 0, \quad \text{and} \quad 2\int_0^\oo \EE \left[ \xi(0) \xi(t) \right] dt = 1.
	\end{split}
	\right.
\end{equation}

We also mention a discrete example, one which plays an important role later in the paper, given by
\begin{equation}\label{randomwalk}
	\xi(t,\omega) = \sum_{k=1}^\oo X_k(\omega) \ind_{[k-1,k)}(t) \quad \text{for } (t,\omega) \in [0,\oo) \times \Omega,
\end{equation}
where $\pars{X^i_k}_{k=1}^\oo: \Omega \to \RR$ is a collection of mutually independent and identically distributed random variables with
\[
	\EE[X_k] = 0 \quad \text{and} \quad \EE[(X_k)^2] = 1 \quad \text{for all } k = 1,2,\ldots.
\]
For such $\xi$, the path $\zeta^{\delta}$ appearing in \eqref{A:xi} is a linearly interpolated random walk, and \eqref{A:xi} follows from Donsker's invariance principle.

%, that is,
%\begin{equation}\label{A:Hbasic}
%	\left\{
%	\begin{split}
%		&H \in C(\RR^d \times \RR^d), \quad p \mapsto H(p,x) \text{ is convex for all $x \in \RR^d$, and}\\
%		&\text{there exist convex, increasing functions } \uline{\nu}, \oline{\nu}: [0,\oo) \to \RR \text{ such that}\\
%		&\uline{\nu}(|p|) \le H(p,x) \le \oline{\nu}(|p|) \quad \text{for all $(p,x) \in \RR^d \times \RR^d$.}
%	\end{split}
%	\right.
%\end{equation}
%
%\begin{theorem}\label{T:introstability}
%	Assume that $H$ satisfies \eqref{A:Hbasic}. Then, given $L > 0$, there exists $C$ depending only on $L$, $\uline{\nu}$, and $\oline{\nu}$ such that, for all $u_0 \in C^{0,1}(\RR^d)$ with $\nor{Du_0}{\oo} \le L$, if $S_{u_0}: C([0,\oo),\RR) \to C(\RR^d \times [0,\oo))$ is the solution operator for \eqref{E:pathwise}, then, for all $\zeta \in C([0,\oo],\RR)$,
%	\[
%		\sup_{ t \in [0,\oo)} \nor{D_x S_{u_0}(\zeta)(\cdot,t)}{\oo} \le C,
%	\]
%	and, for all $\zeta_1,\zeta_2 \in C([0,\oo),\RR)$,
%	\[
%		\sup_{(x,t) \in \RR^d \times [0,T]} \abs{ S_{u_0}(\zeta_1)(x,t) - S_{u_0}(\zeta_2)(x,t)} \le C \max_{t \in [0,T]} \abs{ \zeta_1(t) - \zeta_2(t)}.
%	\]
%\end{theorem}
%The global Lipschitz estimate in space presented in Theorem \ref{T:introstability} is immediate for equations with $x$-independent Hamiltonians like \eqref{E:nospace}. This is due to the translation invariance of the equation and the comparison principle.
%
%

\section{The single-noise case} \label{S:singlenoise}

In this section, we prove the homogenization results stated in the introduction when there is a single white noise approximation. The results here resemble those of the author in \cite{Se}, except that the Hamiltonians need not be smooth or uniformly convex, which allows for the treatment of level-set problems that model front propagation.

\subsection{A general convergence result}
The first result we prove in this section is not directly related to homogenization, and is general enough to be applied to a variety of asymptotic problems. We give more details on such examples, including the ones stated in the introduction, at the end of this section.

For an initial datum $u_0 \in UC(\RR^d)$, paths $(\zeta^\eps)_{\eps \ge 0}: [0,\oo) \times \Omega \to \RR$ and Hamiltonians $(H^\eps)_{\eps \ge 0}: \RR^d \times \RR^d \times \Omega \to \RR$, we consider, for $\eps > 0$, the problems
\begin{equation}\label{E:singlepath}
	du^\eps + H^\eps(Du^\eps, x,\omega)\cdot d\zeta^\eps(t,\omega) = 0 \quad \text{in } \RR^d \times (0,\oo) \quad \text{and} \quad u^\eps(\cdot,0) = u_0 \quad \text{in } \RR^d
\end{equation}
and
\begin{equation}\label{E:generallimit}
	du^0 + H^0(Du^0,x,\omega) \cdot d\zeta^0(t,\omega) = 0 \quad \text{in } \RR^d \times (0,\oo) \quad \text{and} \quad u^0(\cdot,0) = u_0 \quad \text{in } \RR^d.
\end{equation}

Let $(S^\eps_\pm(t))_{\eps, t \ge 0}: (B)UC(\RR^d) \to (B)UC(\RR^d)$ denote the solution operators for
\[
	U^\eps_{\pm, t} \pm H^\eps(DU^\eps_\pm,x,\omega) = 0 \quad \text{in } \RR^d \times (0,\oo), \quad U^\eps_\pm(\cdot,0) = \phi \quad \text{in } \RR^d,
\]
that is, $U^\eps_\pm(x,t) = S^\eps_\pm(t) \phi(x)$ for $\eps \ge 0$ and $(x,t) \in \RR^d \times [0,\oo)$. 

We assume that there exists $\Omega_0 \in \mbb F$ such that $\PP(\Omega_0) = 1$ and the following hold:
\begin{equation}\label{A:paths}
	\left\{
	\begin{split}
	&\text{$\zeta^\eps(\cdot,\omega)$ is continuous for all $\eps \ge 0$ and $\omega \in \Omega_0$, and,}\\
	&\text{as $\eps \to 0$, $\zeta^\eps \to \zeta^0$ locally uniformly and in distribution;}
	\end{split}
	\right.
\end{equation}
and
\begin{equation}\label{A:Hconv}
	\left\{
	\begin{split}
	&\text{there exist $\uline{\nu}, \oline{\nu}:[0,\oo) \to [0,\oo)$ as in \eqref{A:appHbasic} such that,}\\
	&\text{for all $\eps \ge 0$ and $\omega \in \Omega_0$, }(H^\eps(\cdot,\cdot,\omega))_{\eps \ge 0} \text{ satisfies \eqref{A:appHbasic}, and, for all $L,T,\delta > 0$,}\\
	&\lim_{\eps \to 0} \PP \pars{ \sup_{ \nor{D\phi}{\oo} \le L} \max_{(x,t) \in B_T \times [0,T] } \abs{ S^\eps_\pm(t)\phi(x) - S^0_\pm(t)\phi(x)} > \delta} = 0.
	\end{split}
	\right.
\end{equation}
Because $H^\eps(\cdot,\cdot,\omega)$ satisfies the coercivity bounds \eqref{A:appHbasic} for all $\eps \ge 0$ and $\omega \in \Omega_0$, it follows from Theorem \ref{T:pathstability} that the equations \eqref{E:singlepath} and \eqref{E:generallimit} admit unique pathwise viscosity solutions by extending the solution operator to continuous paths.

\begin{theorem}\label{T:singlepath}
	Assume \eqref{A:paths} and \eqref{A:Hconv}, and let $u_0 \in UC(\RR^d)$. Then, as $\eps \to 0$, $(u^\eps,\zeta^\eps)$ converges locally uniformly and in distribution to $(u^0,\zeta^0)$.
\end{theorem}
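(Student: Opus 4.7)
The plan is to interpose the auxiliary function $v^\eps$ solving
\[
	dv^\eps + H^0(Dv^\eps,x,\omega) \cdot d\zeta^\eps(t,\omega) = 0 \quad \text{in } \RR^d \times (0,\oo), \quad v^\eps(\cdot,0) = u_0,
\]
which has the \emph{limiting} Hamiltonian but is driven by the \emph{$\eps$-path}. Joint convergence $(v^\eps,\zeta^\eps) \to (u^0,\zeta^0)$ in distribution is then pure continuity: by Theorem \ref{T:introstability}, the map $F: C([0,\oo)) \to C(\RR^d \times [0,\oo))$ sending $\zeta$ to the pathwise viscosity solution of $dw + H^0(Dw,x)\cdot d\zeta = 0$ with $w(\cdot,0) = u_0$ is (locally Lipschitz) continuous, so Lemma \ref{L:mapping} applied to $\zeta^\eps \to \zeta^0$ gives $(F(\zeta^\eps),\zeta^\eps) \to (F(\zeta^0),\zeta^0) = (u^0,\zeta^0)$ in distribution. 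Combined with $\nor{u^\eps - v^\eps}{L^\oo(\RR^d \times [0,T])} \to 0$ in probability for each $T > 0$ (to be shown), Slutsky's theorem (Lemma \ref{L:slutsky}) yields the conclusion.

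\textit{A priori estimates.} By the contraction property of pathwise Hamilton-Jacobi equations in the initial datum, approximating $u_0$ in sup norm by Lipschitz functions reduces everything to the case $u_0 \in C^{0,1}(\RR^d)$ with $\nor{Du_0}{\oo} \le L$. Theorem \ref{T:introstability}, applied with the common growth bounds $\uline\nu, \oline\nu$ guaranteed by \eqref{A:Hconv}, gives a uniform (in $\eps \ge 0$, $t \ge 0$, $\omega \in \Omega_0$) spatial Lipschitz bound $\nor{D_x u^\eps(\cdot,t)}{\oo}, \nor{D_x v^\eps(\cdot,t)}{\oo} \le C_L$. Applying path stability to the shifted path $r \mapsto \zeta^\eps(s+r) - \zeta^\eps(s)$ started from $u^\eps(\cdot, s)$ also yields a time modulus $\nor{u^\eps(\cdot, s+h) - u^\eps(\cdot, s)}{\oo} \le C_L \sup_{r \in [0,h]} |\zeta^\eps(s+r) - \zeta^\eps(s)|$, and analogously for $v^\eps$.

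\textit{Key estimate $u^\eps - v^\eps \to 0$ in probability.} Fix $T, \delta > 0$ and $n \in \NN$. Let $\bar\zeta^{\eps,n}$ be the piecewise linear interpolation of $\zeta^\eps$ at the nodes $\{kT/n\}_{k=0}^n$, and let $\bar u^{\eps,n}, \bar v^{\eps,n}$ be the classical viscosity solutions driven by $\bar\zeta^{\eps,n}$ with Hamiltonians $H^\eps$ and $H^0$ respectively. By Theorem \ref{T:introstability},
\[
	\nor{u^\eps - \bar u^{\eps,n}}{L^\oo(\RR^d \times [0,T])} + \nor{v^\eps - \bar v^{\eps,n}}{L^\oo(\RR^d \times [0,T])} \le 2 C_L\, \omega_n(\zeta^\eps),
\]
where $\omega_n(\zeta) := \sup_{|s-t| \le T/n} |\zeta(s) - \zeta(t)|$. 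Since $\omega_n$ is continuous on $C([0,T])$, Lemma \ref{L:mapping} and \eqref{A:paths} give $\omega_n(\zeta^\eps) \to \omega_n(\zeta^0)$ in distribution, while $\omega_n(\zeta^0) \to 0$ almost surely as $n \to \oo$; thus we may fix $n$ large and then send $\eps \to 0$ to control this contribution by $\delta/2$ with high probability. On each subinterval $[(k-1)T/n, kT/n]$, the path $\bar\zeta^{\eps,n}$ has constant slope $\alpha_k^\eps$, so $\bar u^{\eps,n}(\cdot, kT/n) = S^\eps_{\sgn \alpha_k^\eps}(|\alpha_k^\eps| T/n) \bar u^{\eps,n}(\cdot, (k-1)T/n)$, and analogously for $\bar v^{\eps,n}$; both operations preserve the Lipschitz bound $C_L$. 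Restricting to the event $\{\max_k |\alpha_k^\eps| T/n \le T_*\}$, which has probability arbitrarily close to $1$ uniformly in small $\eps$ (by tightness of $\zeta^\eps$ and the continuous mapping theorem applied to the sup-norm), and iterating \eqref{A:Hconv} $n$ times with the parameters $L = C_L$ and $T = T_*$, we conclude $\nor{\bar u^{\eps,n} - \bar v^{\eps,n}}{L^\oo(\RR^d \times [0,T])} \to 0$ in probability as $\eps \to 0$.

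The main obstacle is this last step: we must iterate the probabilistic convergence \eqref{A:Hconv} across the random piecewise linear grid while simultaneously propagating the uniform Lipschitz bound $C_L$ through all $n$ compositions and controlling the random time parameters $|\alpha_k^\eps| T/n$. The uniformity over Lipschitz balls and over $(x,t) \in B_T \times [0,T]$ built into \eqref{A:Hconv} is precisely what allows the single-step convergence to survive finite iteration, and the splitting into $(u^\eps \to v^\eps) + (v^\eps \to u^0)$ is what decouples the random Hamiltonian and the random path, circumventing the fact that in general they need not be independent.
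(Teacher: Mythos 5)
Your proposal is essentially correct, but it takes a genuinely different (and arguably cleaner, modulo details) route from the paper. The paper's proof replaces $\zeta^0$ by a random piecewise-linear $\eta$ with $d_p(\zeta^0,\eta) < \delta$ and interposes two functions, $v^\eps$ (Hamiltonian $H^\eps$, path $\eta$) and $v^0$ (Hamiltonian $H^0$, path $\eta$); Lemma \ref{L:fixedpath}, which relies crucially on the domain-of-dependence Lemma \ref{L:finspeed} to shrink the spatial radii $R_i$ as the iteration proceeds, shows $v^\eps \to v^0$ in probability, and the conclusion is assembled via the Portmanteau lemma and a chain of set inclusions. You instead interpose a single function $v^\eps$ (Hamiltonian $H^0$, path $\zeta^\eps$), push the linearization onto the $\eps$-path at deterministic nodes $\{kT/n\}$, and assemble with Slutsky. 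Both split the limit into a ``same path, converging Hamiltonian'' step and a ``same Hamiltonian, converging path'' step, but the pairings are swapped, and your use of Slutsky is more transparent than the paper's Portmanteau inclusions.

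Two places in your argument need to be made precise. First, your iteration of \eqref{A:Hconv} over the $n$ subintervals must track the finite speed of propagation: the error at $B_R$ after the $k$-th step controls only a smaller ball after the $(k+1)$-th. This is exactly what the paper's Lemma \ref{L:finspeed} handles, and it is silently absent in your proposal. It is repairable---since \eqref{A:Hconv} is uniform in $B_T \times [0,T]$ for \emph{every} $T$, one takes a single inflated radius $T_{\mathrm{big}} = R + n \mcl L T_*$ (with $\mcl L$ a deterministic bound on $|D_p H^\eps|$ over $|p| \le C_L$, available from convexity and the uniform growth \eqref{A:Hbasic}) and propagates the error backward---but it must be said. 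Second, the conclusion of your key estimate should be stated as $\|\bar u^{\eps,n} - \bar v^{\eps,n}\|_{L^\infty(B_R \times [0,T])} \to 0$ in probability for each $R$, not a global $L^\infty(\RR^d \times [0,T])$ bound; assumption \eqref{A:Hconv} only gives local control, and local-uniform convergence is all that is needed. Finally, you treat the solution map $F$ for $H^0$ as a fixed (deterministic) map in order to apply Lemma \ref{L:mapping}; since $H^0$ is allowed to depend on $\omega$ in the theorem's framework, this implicitly requires $H^0$ to be deterministic (as it is in every application in the paper), or else a joint convergence statement beyond \eqref{A:paths}---but this is a shared subtlety with the paper's own argument, so I would not count it against you.
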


The key idea in the proof of Theorem \ref{T:singlepath} is to compare with solutions of intermediate equations driven by more regular paths. The stability estimates of Theorem \ref{T:pathstability} allow for this strategy to be effectively carried out. 

Throughout the proofs below, we consider paths $\eta$ that satisfy
\begin{equation}\label{A:finitesignchange}
	\left\{
	\begin{split}
	&\eta: [0,\oo) \to \RR \text{ is piecewise-$C^1$ and, for any $T > 0$,}\\
	&\text{$\dot \eta$ changes sign finitely many times on $[0,T]$}.
	\end{split}
	\right.
\end{equation}

Recall that the metric $d_s$ below, defined in Section \ref{S:theory}, metrizes the space $C(\RR^d \times [0,\oo))$ with the topology of local uniform convergence.

\begin{lemma}\label{L:fixedpath}
	Assume that $v_0 \in UC(\RR^d)$, $\eta: [0,\oo) \times \Omega \to \RR$ is such that $\eta(\cdot,\omega)$ satisfies \eqref{A:finitesignchange} for all $\omega \in \Omega_0$, and $(H^\eps)_{\eps \ge 0}$ satisfies \eqref{A:Hconv}. Let $v^\eps$ and $v^0$ solve
	\begin{equation}\label{E:veq}
		\left\{
		\begin{split}
			&v^\eps_t + H^\eps(Dv^\eps, x,\omega) \dot \eta(t,\omega) = 0 \quad \mathrm{in}\; \RR^d \times (0,\oo),\\
			&v^0_t + H(Dv^0,x,\omega) \dot \eta(t,\omega)  = 0 \quad \mathrm{in}\; \RR^d \times (0,\oo), \quad \text{and}\\
			&v^\eps(\cdot,0) = v^0(\cdot,0) = v_0 \quad \mathrm{in}\; \RR^d.
		\end{split}
		\right.
	\end{equation}
	Then, for all $\delta > 0$,
	\[
		\lim_{\eps \to 0} \PP \pars{  d_s( v^\eps , v^0) > \delta } = 0.
	\]
\end{lemma}

It is necessary to use the following well-known domain-of-dependence result for viscosity solutions of Hamilton-Jacobi equations. For a proof, see the book of Lions \cite{Lbook}.

\begin{lemma}\label{L:finspeed}
	Suppose that $G: \RR^d \times \RR^d \to \RR$ is continuous, fix $L > 0$, let $U$ and $V$ be respectively a sub- and super-solution of
	\[
		U_t = G(DU,x) \quad \text{and} \quad V_t = G(DV,x) \quad \text{in } \RR^d \times (-\oo,\oo)
	\]
	such that $\max (\nor{DU}{\oo}, \nor{DV}{\oo} )\le L$, and suppose that
	\[
		\mcl L := \sup_{(p,x) \in B_L \times \RR^d} \abs{ D_p G(p,x)} < \oo.
	\]
	Then, for all $R > 0$ and $-\oo < s < t < \oo$,
	\[
		\max_{x \in B_{R - \mcl L(t-s)}} \pars{ U(x,t) - V(x,t)} \le \max_{x \in B_R} \pars{ U(x,s) - V(x,s)}.
	\]
\end{lemma}	

The strategy for the proof of Lemma \ref{L:fixedpath} is similar to one used by the author in \cite{Se}. However, the argument is more involved here, due to the randomness of both the Hamiltonian and path, and the fact that no use is made of explicit homogenization error estimates. 

\begin{proof}[Proof of Lemma \ref{L:fixedpath}]
	Observe first that, in view of the contractive property of the equations in \eqref{E:veq}, it suffices to prove the result for $v_0 \in C^{0,1}(\RR^d)$ with $\nor{Dv_0}{\oo} \le L$ for some fixed $L > 0$. Also, it is enough to prove, for any fixed $\delta > 0$ and $T > 0$, that
	\[
		\lim_{\eps \to 0} \PP \pars{ \max_{(x,t) \in B_T \times [0,T] } \abs{ v^\eps(x,t) - v^0(x,t)} > \delta } = 0.
	\]
	
	Fix $\omega \in \Omega_0$, so that there exists a partition $\{0 = t_0 < t_1 < t_2 < \cdots < t_N = T\}$ such that $\eta(\omega)$ is monotone on each interval $[t_i,t_{i+1}]$. Fix $(x,t) \in B_T \times [0,T]$, let $i$ be such that $t \in (t_i,t_{i+1}]$, and assume without loss of generality that $\eta$ is decreasing on $[t_i,t_{i+1}]$.
	
	Set $\Delta := \eta_t - \eta_{t_i}$. Because $\eta$ is monotone on $[t_i,t_{i+1}]$, 
	\[
		v^\eps(\cdot,t) = S^\eps_+(\Delta)v^\eps(\cdot,t_i) \quad \text{and} \quad v^0(\cdot,t) = S^0_+(\Delta)v^0(\cdot,t_i).
	\]
	We then write
	\[
		v^\eps(x,t) - v^0(x,t) = \pars{S^\eps_+(\Delta)v^\eps(\cdot,t_i)(x) - S^\eps_+(\Delta)v^0(\cdot,t_i)(x)}
		+ \pars{S^\eps_+(\Delta)v^0(\cdot,t_i)(x) - S^0_+(\Delta)v^0(\cdot,t_i)(x)}.
	\]
	In view of Theorem \ref{T:pathstability}, there exists a deterministic constant $C_1 > 0$ depending only on $L$ such that
	\[
		\max(\nor{Dv^\eps}{\oo}, \nor{Dv^0}{\oo}) \le C_1.
	\]
	The convexity and uniform growth of $H^\eps$ in the gradient variable then imply that, for some deterministic constant $C_2 > 0$ depending only on $L$,
	\[
		\sup_{\eps > 0} \sup_{|p| \le C_1} \sup_{x \in \RR^d} \abs{ D_p H^\eps(p,x,\omega)} \le C_2.
	\] 
	Lemma \ref{L:finspeed} then implies that, for all $x \in B_T$,
	\[
		\abs{ S^\eps_+(\Delta)v^\eps(\cdot,t_i)(x) - S^\eps_+(\Delta)v^0(\cdot,t_i)(x)} \le \max_{y \in B_{T + C_2\Delta}} \abs{ v^\eps(y,t_i) - v^0(y,t_i)},
	\]
	and so
	\begin{equation}\label{iteration}
		\abs{v^\eps(x,t) - v^0(x,t)} \le \sum_{i=0}^{N-1} \max_{(y,\tau) \in B_{R_i} \times [0, \Delta_i]} \abs{ S^\eps_{\pm}(\tau)v^0(\cdot,t_i)(y) - S^0_{\pm}(\tau)v^0(\cdot,t_i)(y)},
	\end{equation}
	where
	\[
		\Delta_i := \abs{ \eta(t_{i+1}) - \eta(t_i)} \quad \text{and} \quad R_i := T + C_2 \sum_{k=i}^{N-1} \Delta_k,
	\]
	and the subscripts $+$ and $-$ for the solution operators in \eqref{iteration} are chosen depending on whether $\eta$ is respectively decreasing or increasing on $[t_i,t_{i+1}]$.
	
	For $M > 0$, define
	\[
		A_M := \left\{ \omega \in \Omega_0 : N(\omega) \le M, \; \max_{i=0,1,2,\ldots, N-1} \Delta_i(\omega) \le M, \; R_{N(\omega)-1}(\omega) \le M \right\}.
	\]
	Then, for any $M > 0$,
	\begin{align*}
		\PP &\pars{ \max_{(x,t) \in B_T \times [0,T] } \abs{ v^\eps(x,t) - v^0(x,t)} > \delta }
		= \PP \pars{ \Omega_0 \cap \left\{ \max_{(x,t) \in B_T \times [0,T] } \abs{ v^\eps(x,t) - v^0(x,t)} > \delta \right\}}\\
		&\le \PP \pars{ \Omega_0 \backslash A_M} + \PP \pars{ A_M \cap \left\{ \sum_{i=0}^{N-1} \max_{(y,\tau) \in B_{R_i} \times [0, \Delta_i]} \abs{ S^\eps_{\pm}(\tau)v^0(\cdot,t_i)(y) - S^0_{\pm}(\tau)v^0(\cdot,t_i)(y)} > \delta  \right\}}\\
		&\le \PP \pars{ \Omega_0 \backslash A_M} + \PP \pars{  \sup_{\nor{D\phi}{\oo} \le C_1} \max_{(x,\tau) \in B_M \times [0,M]} \abs{ S^\eps_{\pm}(\tau)\phi(x) - S^0_{\pm}(\tau)\phi(x)} > \frac{\delta}{M} },
	\end{align*}
	and so, in view of \eqref{A:Hconv},
	\[
		\limsup_{\eps \to 0} \PP\pars{ \max_{(x,t) \in B_T \times [0,T] } \abs{ v^\eps(x,t) - v^0(x,t)} > \delta } \le \PP \pars{ \Omega_0 \backslash A_M}.
	\]
	Sending $M \to \oo$ yields the result.
\end{proof}

\begin{proof}[Proof of Theorem \ref{T:singlepath}]
	Appealing to the Portmanteau Theorem (see \cite{Bill}), it suffices to show that, for any open set $\mcl U\subset C(\RR^d \times [0,\oo)) \times C([0,\oo),\RR)$,
	\[
		\liminf_{\eps \to 0} \PP\pars{ (u^\eps,\zeta^\eps) \in \mcl U} \ge \PP \pars{ (u^0,\zeta^0) \in \mcl U}.
	\]
	Recall that we metrize the space $C(\RR^d \times [0,\oo)) \times C([0,\oo),\RR)$ with the metric $\mathrm{d} := d_s + d_p$ defined in Section \ref{S:theory}. For $\sigma > 0$, define the open set
	\[
		\mcl U_\sigma := \left\{ (v,\eta) \in \mcl U : \mathrm{d}((v,\eta), (w,\tau)) > \sigma \; \text{for all } (w,\tau) \notin \mcl U \right\}.
	\]
	
	As in the proof of Lemma \ref{L:fixedpath}, it suffices to take $u_0 \in C^{0,1}(\RR^d)$ with $\nor{Du_0}{\oo} \le L$ for some fixed $L > 0$.
	
	Fix $\delta > 0$, and let $\eta: [0,\oo) \times \Omega \to \RR$ be such that, for all $\omega \in \Omega_0$, $\eta(\omega)$ satisfies \eqref{A:finitesignchange} and $d_p(\zeta^0(\omega),\eta(\omega)) < \delta$. For example, $\eta$ could be a piecewise linear interpolation of $\zeta^0$ over an appropriately defined (random) partition. 
	
	Let $v^\eps$ and $v^0$ be as in the statement of Lemma \ref{L:fixedpath} with the path $\eta$. Theorems \ref{T:LScriteria} and \ref{T:pathstability} then yield a constant $C > 0$ depending only on $L$ such that, for all $\omega \in \Omega_0$,
	\[
		d_s(u^\eps(\omega),v^\eps(\omega)) \le Cd_p(\zeta^\eps(\omega),\eta(\omega)) \quad \text{and} \quad d_s(u^0(\omega),v^0(\omega)) \le C \delta.
	\]
	Lemma \ref{L:fixedpath} gives the existence of a deterministic $\eps_0 > 0$ such that, for all $\eps \in (0,\eps_0)$, $\PP(\Omega^\delta_\eps) \ge 1 - \delta$, where
	\[
		\Omega^\delta_\eps := \left\{ \omega \in \Omega_0: d_s(v^\eps(\omega),v^0(\omega)) < \delta  \right\}. 
	\]
	Then, for all $\eps \in (0,\eps_0)$,
	\begin{align*}
		\left\{ (u^\eps,\zeta^\eps) \in \mcl U \right\} \cup \pars{\Omega^\delta_\eps}^c
		&\supset \left\{ (v^\eps, \eta, \zeta^\eps) \in \mcl U_{(C+1)\delta} \times \mcl B_\delta(\eta) \right\} \cup \pars{ \Omega^\delta_\eps}^c\\
		&\supset \left\{ (v^0, \eta, \zeta^\eps) \in \mcl U_{(C+2)\delta} \times \mcl B_\delta(\eta) \right\} \cup \pars{ \Omega^\delta_\eps}^c,
	\end{align*}
	where $\mcl B_\delta(\eta) \subset C([0,\oo),\RR)$ denotes the open ball of radius $\delta$ centered at $\eta$ with respect to the metric $d_p$. 
	
	It follows that, for all $\eps \in (0,\eps_0)$,
	\[
		\PP \pars{ (u^\eps,\zeta^\eps) \in \mcl U } \ge \PP \pars{ (v^0,\eta,\zeta^\eps) \in \mcl U_{(C+2)\delta} \times \mcl B_\delta(\eta) } - \delta,
	\]
	which, together with \eqref{A:paths}, yields, after sending $\eps \to 0$,
	\[
		\liminf_{\eps \to 0} \PP \pars{ (u^\eps,\zeta^\eps) \in \mcl U } \ge \PP \pars{ (v^0,\eta,\zeta) \in \mcl U_{(C+2)\delta} \times \mcl B_\delta(\eta)} - \delta \ge \PP \pars{ (u^0,\zeta) \in \mcl U_{(2C+3)\delta} } - \delta.
	\]
	The result now follows upon sending $\delta \to 0$.
\end{proof}

\subsection{Applications of Theorem \ref{T:singlepath}}

The assumptions needed for Theorem \ref{T:singlepath}, and in particular, those for the Hamiltonians $H^\eps$, are general enough to apply to a multitude of settings. For instance, the dependence of $H^\eps$ on $x/\eps$ can be periodic, almost periodic, or stationary ergodic. All that is needed is \eqref{A:Hconv}, that is, convergence to some $H^0$ in the solution-operator sense. Here, to have a simplified presentation, we discuss only the periodic and random settings, with $H^\eps$ given as a function of $x/\eps$ and possibly $x$.

We first prove the result from the introduction concerning the initial value problem
\begin{equation}\label{E:singlenoise}
	u^\eps_t + \frac{1}{\eps^\gamma} H\pars{Du^\eps,\frac{x}{\eps},\omega} \xi\pars{\frac{t}{\eps^{2\gamma}}, \omega} =0 \quad \text{in } \RR^d \times (0,\oo) \quad \text{and} \quad u^\eps(\cdot,0) = u_0 \quad \text{in } \RR^d
\end{equation}
for a fixed $\gamma > 0$ and $u_0 \in UC(\RR^d)$, a white noise approximation $\xi: [0,\oo) \times \Omega \to \RR$ in the sense of \eqref{A:xi}, and a Hamiltonian $H: \RR^d \times \RR^d \times \Omega \to \RR$ for which
\begin{equation}\label{A:singlenoiseH}
	\left\{
	\begin{split}
		&\text{there exists $\Omega_0 \in \mbb F$ with $\PP(\Omega_0) = 1$ and deterministic $\uline{\nu},\oline{\nu}: [0,\oo) \to [0,\oo)$ as in \eqref{A:appHbasic}}\\
		&\text{such that } H(\cdot,\cdot,\omega) \text{ satisfies \eqref{A:appHbasic} uniformly over $\omega \in \Omega_0$, and}\\
		&\text{either $y \mapsto H(\cdot,y)$ is deterministic and periodic, or}\\
		&\text{$(y,\omega) \mapsto H(\cdot,y,\omega)$ is a stationary-ergodic random field.}
	\end{split}
	\right.
\end{equation}
The latter condition for $H$ means that there exists a group of transformations $\{T_y\}_{y \in \RR^d}: \Omega \to \Omega$ such that
\begin{equation}\label{A:Hstatergod}
	\left\{
	\begin{split}
		&\PP = \PP \circ T_y \text{ for all $y \in \RR^d$,} \\
		&H(p,x,T_y \omega) = H(p, x + y, \omega) \text{ for all $(p,x,y,\omega) \in \RR^{3d} \times \Omega$, and,}\\
		&\text{if } E \in \mbb F \text{ and }T_y E =  E \text{ for all } y \in \RR^d, \text{ then } \PP(E) = 1 \text{ or }  \PP(E) = 0.
	\end{split}
	\right.
\end{equation}

For $(t,\omega) \in [0,\oo) \times \Omega \to \RR$, define
\begin{equation}\label{zeta}
	\zeta^\eps(t,\omega) := \eps^\gamma \int_0^{t/\eps^{2\gamma}} \xi(s,\omega)\;ds = \frac{1}{\eps^\gamma} \int_0^t \xi\pars{ \frac{s}{\eps^{2\gamma}},\omega}ds.	
\end{equation}

\begin{corollary}\label{C:singlenoise}
	Let $\gamma > 0$ and $u_0 \in UC(\RR^d)$ and assume that $\xi$ and $H$ satisfy respectively \eqref{A:xi} and \eqref{A:singlenoiseH}. Then there exist a deterministic, convex $\oline{H}: \RR^d \to \RR$ satisfying \eqref{A:appHbasic}, which depends only on $H$, and a Brownian motion $B: [0,\oo) \times \Omega \to \RR$ such that, as $\eps \to 0$, $(u^\eps,\zeta^\eps)$ converges locally uniformly and in distribution to $(\oline{u},B)$, where $\oline{u}$ is the unique stochastic viscosity solution of
	\begin{equation}\label{E:singlenoiseeq}
		d\oline{u} + \oline{H}(D\oline{u}) \circ dB =0 \quad \text{in } \RR^d \times (0,\oo) \quad \text{and} \quad \oline{u}(\cdot,0) = u_0 \quad \text{in } \RR^d.
	\end{equation}
\end{corollary}
Note that equation \eqref{E:singlenoiseeq} is well-posed in the stochastic viscosity sense, by merit of Theorem \ref{T:LScriteria}.

The corollary is a direct consequence of Theorem \ref{T:singlepath}, with $\zeta^\eps$ defined as in \eqref{zeta} for $\eps > 0$, $\zeta^0 = B$,
\[
	H^\eps(p,x,\omega) := H \pars{ p, \frac{x}{\eps}, \omega } \quad \text{for $\eps > 0$ and } (p,x,\omega) \in \RR^d \times \RR^d \times \Omega, \quad \text{and} \quad H^0(p,x) = \oline{H}(p)
\]
for $(p,x) \in \RR^d \times \RR^d$, where $\oline{H}$ is the deterministic, convex, effective Hamiltonian in either the periodic or random homogenization settings. The convergence in distribution of $\zeta^\eps$ to the Brownian motion follows from \eqref{A:xi}. 

Meanwhile, $H$ and $\oline{H}$ satisfy \eqref{A:Hconv} in either the periodic or random homogenization settings. This is proved in the periodic setting by Lions, Papanicolaou, and Varadhan \cite{LPV} and Evans \cite{E}, and in the random setting by Souganidis \cite{S} and Rezakhanlou and Tarver \cite{RT} (see also Armstrong and Souganidis \cite{AS} for a more general result). In either setting, the uniformity of the convergence in \eqref{A:Hconv} over $\phi$ with a bounded Lipschitz constant is a consequence of the contractive property of the equations and the compact embedding of $C^{0,1}(\RR^d)$ into $C(\RR^d)$. The fact that $\oline{H}$ satisfies the bounds in \eqref{A:appHbasic} follows from standard estimates on the effective Hamiltonian.

Finally, we note that, for $H$ satisfying \eqref{A:singlenoiseH}, the limiting problems for
\[
	u^\eps_t + H\pars{Du^\eps,\frac{x}{\eps}} = 0 \quad \text{and} \quad u^\eps_t - H\pars{ Du^\eps, \frac{x}{\eps}} = 0
\]
are, respectively,
\[
	\oline{u}_t + \oline{H}\pars{D\oline{u}} = 0 \quad \text{and} \quad \oline{u}_t - \oline{H}\pars{D\oline{u}} = 0,
\]
or, more concisely,
\begin{equation}\label{consistentH}
	\oline{(-H)} = - \oline{H}.
\end{equation}
The identity \eqref{consistentH} does not hold in general. Here, it is a consequence of the convexity of $H$ in the gradient variable. For more details, see \cite{Se}.

We next consider equations with true white noise in time, that is,
\begin{equation}\label{E:singleBrowniannoise}
	du^\eps + H\pars{ Du^\eps, \frac{x}{\eps},\omega} \circ dB = 0 \quad \text{in } \RR^d \times (0,\oo) \quad \text{and} \quad u^\eps(\cdot,0) = u_0 \quad \text{in } \RR^d,
\end{equation}
where $B: \Omega \times [0,\oo) \to \RR$ is a standard Brownian motion.

\begin{corollary}\label{C:singleBrowniannoise}
	Under the same hypotheses of Corollary \ref{C:singlenoise}, as $\eps \to 0$, the solution $u^\eps$ of \eqref{E:singleBrowniannoise} converges locally uniformly and in distribution to the solution of \eqref{E:singlenoiseeq}.
\end{corollary}

The result follows from Theorem \ref{T:singlepath}, taking $(H^\eps)_{\eps \ge 0}$ as before and $\zeta^\eps = B$ for all $\eps \ge 0$.

We now mention some results concerning the initial value problems
\begin{equation}\label{E:slowspacedep}
	u^\eps_t + \frac{1}{\eps^\gamma} H\pars{ Du^\eps,\frac{x}{\eps},x,\omega} \xi \pars{ \frac{t}{\eps^{2\gamma}} ,\omega} = 0 \quad \text{in } \RR^d \times (0,\oo) \quad \text{and} \quad u^\eps(\cdot,0) = u_0 \quad \text{in } \RR^d
\end{equation}
and
\begin{equation}\label{E:Brownianslowspacedep}
	du^\eps + H\pars{ Du^\eps,\frac{x}{\eps},x,\omega}\circ dB= 0 \quad \text{in } \RR^d \times (0,\oo) \quad \text{and} \quad u^\eps(\cdot,0) = u_0 \quad \text{in } \RR^d.
\end{equation}
The following result is a consequence of Theorem \ref{T:singlepath}, as well as the homogenization results cited above from \cite{LPV,E,S,RT}, which extend also to this setting.

\begin{corollary}\label{C:slowspacedep}
	Assume that $\gamma > 0$, $u_0 \in UC(\RR^d)$, $B:[0,\oo) \times \Omega \to \RR$ is a Brownian motion, $H$ is uniformly continuous in $B_R \times \RR^d \times \RR^d$ for each $R > 0$, and there exist $\Omega_0 \in \mbb F$ and $\uline{\nu},\oline{\nu}$ as in \eqref{A:singlenoiseH} such that, for each fixed $x \in \RR^d$, $H(\cdot,\cdot,x)$ satisfies \eqref{A:singlenoiseH}. Then there exists a deterministic $\oline{H} \in C(\RR^d \times \RR^d)$ satisfying \eqref{A:appHbasic} such that the following hold:
	\begin{enumerate}[(a)]
	\item \label{T:slowspacesmoothxi} For any $\xi: [0,\oo) \times \Omega \to \RR$ satisfying \eqref{A:xi}, if $u^\eps$ is the solution of \eqref{E:slowspacedep} and $\zeta^\eps$ is as in \eqref{zeta}, then, as $\eps \to 0$, $(u^\eps,\zeta^\eps)$ converges locally uniformly and in distribution to $(\oline{u},B)$, where $\oline{u}$ is the stochastic viscosity solution $\oline{u}$ of
	\begin{equation}\label{E:slowspacelimit}
		d\oline{u} + \oline{H}(D \oline{u},x) \circ dB = 0 \quad \text{in } \RR^d \times (0,\oo) \quad \text{and} \quad \oline{u}(\cdot,0) = u_0 \quad \text{in } \RR^d.
	\end{equation}

	\item \label{T:slowspaceBM} As $\eps \to 0$, the solution $u^\eps$ of \eqref{E:Brownianslowspacedep} converges locally uniformly and in distribution to $\oline{u}$.
	\end{enumerate}
\end{corollary}

We conclude this subsection by explaining how the above results can be applied to equations of level-set type. Indeed, if, for some $a: \Omega \to C(S^{d-1} \times \RR^d\times \RR^d)$, 
\begin{equation}\label{frontspeedH}
	H(p,y,x,\omega) = a\pars{ \frac{p}{|p|},y, x,\omega}|p|,
\end{equation}
then \eqref{E:slowspacedep} and \eqref{E:Brownianslowspacedep} become level-set equations for certain first-order interfacial motions. For some $\oline{a} \in C(S^{d-1}\times \RR^d)$, the effective Hamiltonian then has the form 
\[
	\oline{H}(p,x) := \oline{a}\pars{ \frac{p}{|p|} ,x}|p| \quad \text{for } p \in \RR^d.
\]

The Hamiltonian \eqref{frontspeedH} satisfies \eqref{A:singlenoiseH} if there exist $0 < a_- < a_+$ such that, with probability one,
\[
	a_- \le a(n, x) \le a_+ \quad \text{for all } (n,x) \in S^{d-1} \times \RR^d,
 \quad \text{and} \quad
	p \mapsto a\pars{ \frac{p}{|p|}, \cdot,} |p| \quad \text{is convex}.
\]

\section{The multiple-noise case}\label{S:multiplenoise}

We now turn to the study of the initial value problem
\begin{equation}\label{E:multiplepaths}
	u^\eps_t + \frac{1}{\eps^\gamma} \sum_{i=1}^m H^i\pars{ Du^\eps, \frac{x}{\eps}} \xi^i\pars{ \frac{t}{\eps^{2\gamma}} ,\omega} = 0 \quad \text{in } \RR^d \times (0,\oo) \quad \text{and} \quad u^\eps(\cdot,0) = u_0 \quad \text{in } \RR^d.
\end{equation}
Throughout this section, we will assume that each Hamiltonian is deterministic and periodic in space, and that, for each $i = 1, 2,\ldots, m$, $\xi^i$ is a discrete mixing field satisfying \eqref{randomwalk}, that is,
\begin{equation}\label{laterrandomwalk}
	\left\{
	\begin{split}
	&\xi^i(t,\omega) = \sum_{k=1}^\oo X^i_k(\omega) \ind_{[k-1,k)}(t) \quad \text{for } (t,\omega) \in [0,\oo) \times \Omega,\\
	&\text{where } \pars{X^i_k}_{k=1}^\oo: \Omega \to \RR \text{ are independent and identically distributed with}\\
	&\EE[X^i_k] = 0 \text{ and } \EE[(X^i_k)^2] = 1.
	\end{split}
	\right.
\end{equation}
As in \eqref{zeta}, we set, for each $i = 1,2,\ldots,m$,
\begin{equation}\label{zetas}
	\zeta^{i,\eps}(t,\omega) := \frac{1}{\eps^\gamma} \int_0^{t/\eps^{2\gamma}} \xi^i(s,\omega)\;ds \quad \text{for } (t,\omega) \in [0,\oo) \times \Omega,
\end{equation}
so that, in view of Donsker's invariance principle, for some Brownian motion $B^i: [0,\oo) \times \Omega \to \RR$,
\[
	\zeta^i \xrightarrow{\eps \to 0} B^i \quad \text{in $C([0,\oo), \RR)$ in distribution.}
\]

\subsection{Difficulties} We begin with a discussion of the general strategy of proof in the multiple noise setting, and the challenges that arise.

We first make the formal assumption, one which we later justify by choosing $\gamma$ sufficiently small (see Lemma \ref{L:homogerror} below), that $u^\eps$ is closely approximated by a solution $\oline{u}^\eps$ of an equation of the form 
\begin{equation} \label{E:generaletaeq}
	\oline{u}^\eps_t + \frac{1}{\eps^\gamma} \oline{H}\pars{ D \oline{u}^\eps,\xi \pars{ \frac{t}{\eps^{2\gamma} }, \omega} }= 0 \quad \text{in } \RR^d \times (0,\oo) \quad \text{and} \quad \oline{u}^\eps(\cdot,0) = u_0 \quad \text{in } \RR^d,
\end{equation}
via the expansion
\[
	u^\eps(x,t) \approx \oline{u}^\eps(x,t) + \eps v(x/\eps,t) + \cdots
\]
for some $v: \TT^d \times [0,\oo) \to \RR$. This yields to the following equation for $v$, for fixed $p \in \RR^d$ and $\xi \in \RR^m$:
\begin{equation}\label{E:generalcellproblem}
	\sum_{i=1}^m H^i(D_y v + p,y)\xi^i = \oline{H}(p, \xi) \quad \text{in } \RR^d. 
\end{equation}
The fixed parameters $p$ and $\xi$ stand in place of respectively the gradient $D \oline{u}^\eps(x,t)$ and the mild white noise $\eps^{-\gamma}\xi(t/\eps^{2\gamma})$. 

Note that, in deriving \eqref{E:generaletaeq}, we have assumed that $\xi \mapsto \oline{H}(\cdot,\xi)$ is positively homogenous. Later, we justify this by the fact that, under sufficient conditions on the $H^i$, \eqref{E:generalcellproblem} admits periodic solutions for a unique choice of constant $\oline{H}(p,\xi)$ on the right hand side. The positive homogeneity can then be seen from multiplying both sides of \eqref{E:generalcellproblem} by a positive constant.

If $u_0(x) = p_0 \cdot x$ for some fixed $p_0 \in \RR$, then the solution of \eqref{E:generaletaeq} is given by
\[
	\oline{u}^\eps(x,t) = p_0 \cdot x - \frac{1}{\eps^{\gamma}} \int_0^t \oline{H}\pars{ p_0, \xi \pars{ \frac{s}{\eps^{2\gamma}} } }ds.
\]
Therefore, if it can be proved that
\begin{equation}\label{centeredeffective}
	\EE \left[ \oline{H}(p_0,X_0^1,X_0^2,\ldots, X_0^m) \right]= 0,
\end{equation}
it then follows that $\oline{u}^\eps$ converges locally uniformly and in distribution, as $\eps \to 0$, to $p_0 \cdot x + \sigma(p_0) B(t)$, where $B$ is a standard Brownian motion and
\[
	\sigma(p_0)^2 :=  \EE\left[ \oline{H}(p_0, X_0^1,X_0^2,\ldots, X_0^m)^2 \right].
\]
However, the nonlinear nature of the problem makes it difficult to describe the limit of $\oline{u}^\eps$ as $\eps \to 0$ for general initial data $u_0 \in UC(\RR^d)$. This distinguishes the problem from those studied in \cite{BP,KH,W}, where the equations are uniformly parabolic and semilinear.

A further complication arises from the fact that, for two $\RR^m$-valued random variables $X_0$ and $\tilde X_0$ as in \eqref{laterrandomwalk}, the identity
\begin{equation}\label{samevariance}
	\EE\left[ \oline{H}(p, X_0)^2 \right] = \EE \left[ \oline{H}(p,\tilde X_0)^2 \right] \quad \text{for all } p \in \RR^d
\end{equation}
may fail in general, which indicates that the law of the field $\xi$ in equation \eqref{E:multiplepaths} can have a nontrivial effect on the limiting equation. 

As shown above, if \eqref{samevariance} does hold, then, whenever the initial data has the form $u_0(x) = p \cdot x$ for some $p \in \RR^d$, the laws of the limiting functions depend only on $p$, and not on the laws of $X_0$ and $\tilde X_0$. However, it could still be the case that the laws of the limiting functions differ for more general initial data. 

As an indication of why this is true, consider, for $u_0 \in UC(\RR)$ and two Brownian motions $B, \tilde B: [0,\oo) \times \Omega \to \RR$, the initial value problems
\[
	\left\{
	\begin{split}
	&du - u_x \circ dB = 0, \quad d\tilde{u} -|\tilde u_x| \circ d\tilde B = 0 \quad \text{in } \RR \times (0,\oo), \quad \text{and} \\
	&u(x,0) = \tilde u(x,0) = u_0 \quad \text{in } \RR.
	\end{split}
	\right.
\]
If $u_0(x) = p  x$ for some fixed $p \in \RR$, then the solutions
\[
	u(x,t) = p  x + p B(t) \quad \text{and} \quad \tilde u(x,t) = p  x + |p| \tilde B(t)
\]
have the same law as $C(\RR \times [0,\oo))$-valued random variables. However, if $u_0(x) = |x|$, then a simple calculation yields that
\[
	u(x,t) = |x + B(t)|,
\]
while it is shown in \cite{LS2, LSbook,Snotes} that
\[
	\tilde u(x,t) = \max\left\{ |x| + \tilde B(t), \max_{0 \le s \le t} \tilde B(s) \right\}.
\]
The $C(\RR \times [0,\oo))$-valued random variables $u$ and $\tilde u$ evidently do not share the same law.

\subsection{A general class of examples} We now present a class of Hamiltonians and white noise approximations for which, given any initial data $u_0 \in UC(\RR^d)$, the limit as $\eps \to 0$ of the solution $u^\eps$ of \eqref{E:multiplepaths} can be identified as the unique stochastic viscosity solution of a certain initial value problem.

We assume that the Hamiltonians satisfy
\begin{equation}\label{A:Hs}
	\left\{
	\begin{split}
		&H^i \in C^{0,1}(\RR^d \times \TT^d), \text{ and, for each $\xi \in \{-1,1\}^m$},\\
		&p \mapsto \sum_{i=1}^m H^i(p, \cdot)\xi^i \text{ is either convex or concave and}\\
		&\lim_{|p| \to +\oo} \inf_{y \in \TT^d} \sum_{i=1}^m H^i(p, y)\xi^i = +\oo
		\quad \text{or} \quad
		\lim_{|p| \to +\oo} \sup_{y \in \TT^d} \sum_{i=1}^m H^i(p, y)\xi^i = -\oo.
	\end{split}
	\right.
\end{equation}

\begin{lemma}\label{L:generalcellproblem}
Assume \eqref{A:Hs}. Then, for all $p \in \RR^d$ and $\xi \in \{-1,1\}^m$, there exists a unique constant $\oline{H}(p,\xi) \in \RR$ such that \eqref{E:generalcellproblem} admits a periodic solution. Moreover, $p \mapsto \oline{H}(p, \cdot)$ is either convex or concave, and
\begin{equation}\label{homogenouseffective}
	\oline{H}(\cdot,\lambda \xi) = \lambda \oline{H}(\cdot,\xi) \quad \text{for all } \lambda \in \RR \text{ and } \xi \in \{-1,1\}^m.
\end{equation}
\end{lemma}

\begin{proof}
The solvability of the cell problem \eqref{E:generalcellproblem} is a direct consequence of the coercivity assumption in \eqref{A:Hs} and the results of \cite{LPV,E}, as is the convexity or concavity in the gradient variable and the homogeneity in \eqref{homogenouseffective} for $\lambda > 0$. The fact that \eqref{homogenouseffective} holds for negative $\lambda$ follows from the identity \eqref{consistentH}.
\end{proof}

The mixing fields are assumed to satisfy, for $i = 1,\ldots, m$,
\begin{equation}\label{A:checkerboards}
	\left\{
	\begin{split}
	&\xi^i(t,\omega) = \sum_{k=0}^{\oo} X^i_k(\omega) \ind_{(k,k+1)})(t) \quad \text{for } (t,\omega) \in [0,\oo) \times \Omega, \text{ where} \\
	&\pars{ X^i_k}_{i=1,2,\ldots, m, \; k = 0,1,\ldots} \quad \text{are independent Rademacher random variables}.
	\end{split}
	\right.
\end{equation}
Define
\[
	\left\{
	\begin{split}
	&\mcl A^m := \{ \mbf j = (j_1, j_2, \ldots, j_l) : j_i \in \{1, 2, \ldots, m\}, \; j_1 < j_2 < \cdots < j_l \},\\
	&\abs{ \mbf j} = \abs{ (j_1, j_2, \ldots, j_l)} := l, \quad \text{and}\\
	&\mcl A^m_o := \{ \mbf j \in \mcl A^m : \abs{ \mbf j} \text{ is odd} \},
	\end{split}
	\right.
\]
and note that $\#\mcl A^m = 2^m - 1$ and $\#\mcl A^m_o = 2^{m-1}$.

For each $\mbf j = (j_1,j_2,\ldots, j_l) \in \mcl A^m$, set
\begin{equation}\label{effectiveingredients}
	\left\{
	\begin{split}
		&\xi^{\mbf j} := \xi^{j_1} \xi^{j_2} \cdots \xi^{j_l} \quad \text{for } \xi = (\xi^1,\xi^2, \ldots, \xi^m) \in \{-1,1\}^m,\\
		&\oline{H}^{\mbf j}(p) := \frac{1}{2^m} \sum_{\xi \in \{-1,1\}^m} \oline{H}(p,\xi) \xi^{\mbf j} \quad \text{for } p \in \RR^d, \\
		&X^{\mbf j}_k(\omega) := X^{j_1}_k(\omega) X^{j_2}_k(\omega) \cdots X^{j_l}_k(\omega), \\
		&\zeta^{\mbf j}(0,\omega) := 0, \quad \dot \zeta^{\mbf j}(t,\omega) := \sum_{k=0}^{\oo} X^{\mbf j}_k(\omega) \ind_{(k,k+1)}(t), \quad \text{and}\\
		&\zeta^{\mbf j, \eps}(t,\omega) := \eps^\gamma \zeta^{\mbf j}(t/\eps^{2\gamma},\omega) \quad \text{for } (t,\omega) \in [0,\oo) \times \Omega.
	\end{split}
	\right.
\end{equation}

Observe that, for each $\mbf j \in \mcl A^m_o$, $\oline{H}^{\mbf j}$ is a difference of convex functions, and that the homogeneity property \eqref{homogenouseffective} implies that $\oline{H}^{\mbf j} = 0$ whenever $|\mbf j|$ is even.

\begin{theorem}\label{T:generalmultpaths}
	Assume that $0 < \gamma < 1/6$, $u_0 \in UC(\RR^d)$, \eqref{A:Hs}, and \eqref{A:checkerboards}, and let $u^\eps$ be the solution of \eqref{E:multiplepaths}. Then there exist $2^{m-1}$ independent Brownian motions $(B^{\mbf j})_{\mbf j \in \mcl A^m_o}$, such that, in distribution,
	\[
		\pars{ u^\eps, (\zeta^{\mbf j,\eps} )_{\mbf j \in \mcl A^m_o} } \xrightarrow{\eps \to 0} \pars{ \oline{u}, (B^{\mbf j} )_{\mbf j \in \mcl A^m_o} } \quad \text{in } C(\RR^d \times [0,\oo)) \times C\pars{[0,\oo), \RR^{2^{m-1}} },
	\]
	where $\oline{u}$ is the unique stochastic viscosity solution of
	\begin{equation}\label{E:effectivemultiple}
		d\oline{u} + \sum_{\mbf j \in \mcl A^{m}_0} \oline{H}^{\mbf j}(D \oline{u}) \circ dB^{\mbf j} = 0 \quad \text{in } \RR^d \times (0,\oo) \quad \text{and} \quad \oline{u}(\cdot,0) = u_0 \quad \text{in } \RR^d.
	\end{equation}
	If $d = 1$, or if $d = 2$ and $p \mapsto \oline{H}(p,\cdot)$ is homogenous of degree $q$ for some $q \ge 1$, then the result holds for $0 < \gamma < 1/2$.
\end{theorem}

The result relies on the fact that the $\xi^i$ take their values in $\{-1,1\}$, and functions defined on $\{-1,1\}^m$ take a very particular form.

\begin{lemma}\label{L:decomp}
	Let $f: \{-1,1\}^m \to \RR$. Then 
	\begin{equation}\label{fdecomp}
		f(\xi) = f_0 + \sum_{\mbf j \in \mcl A^m} f_{\mbf j} \xi^{\mbf j},
	\end{equation}
	where
	\[
		f_0 := \frac{1}{2^m} \sum_{\xi \in \{-1,1\}^m} f(\xi) \quad \text{and} \quad
		f_{\mbf j} := \frac{1}{2^m} \sum_{\xi \in \{-1,1\}^m} f(\xi)\xi^{\mbf j}.
	\]
	If $f$ is odd, then $f_0 = 0$, and the sum in \eqref{fdecomp} is taken over $\mbf j \in \mcl A^m_o$.
\end{lemma}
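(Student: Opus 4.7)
The plan is to recognize the statement as the standard real Fourier--Walsh expansion of a function on the discrete hypercube $\{-1,1\}^m$, and to prove it by exhibiting the monomials $\xi^{\mbf j}$ as an orthonormal basis with respect to the uniform probability inner product.

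First, I would introduce the inner product
\[
    \langle f, g \rangle := \frac{1}{2^m} \sum_{\xi \in \{-1,1\}^m} f(\xi) g(\xi)
\]
on the space of real-valued functions on $\{-1,1\}^m$, which has dimension $2^m$. The family $\{1\} \cup \{\xi^{\mbf j} : \mbf j \in \mcl A^m\}$ consists of exactly $2^m$ elements (one for each subset of $\{1,2,\ldots,m\}$), so to conclude that it forms a basis it is enough to establish orthonormality. For $\mbf j, \mbf k \in \mcl A^m$, using $(\xi^i)^2 = 1$, the product $\xi^{\mbf j}\xi^{\mbf k}$ collapses to $\xi^{\mbf j \bigtriangleup \mbf k}$ where $\bigtriangleup$ denotes symmetric difference. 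If $\mbf j = \mbf k$ this product is identically $1$, giving $\langle \xi^{\mbf j}, \xi^{\mbf k}\rangle = 1$. If $\mbf j \ne \mbf k$, fix any index $l$ in $\mbf j \bigtriangleup \mbf k$; pairing each $\xi$ with its reflection in the $l$-th coordinate makes the summands cancel, so $\langle \xi^{\mbf j}, \xi^{\mbf k}\rangle = 0$. The same argument with $\mbf k = \emptyset$ yields $\langle 1, \xi^{\mbf j}\rangle = 0$, completing the orthonormality check.

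Second, given that $\{1\} \cup \{\xi^{\mbf j}\}_{\mbf j \in \mcl A^m}$ is an orthonormal basis, every $f$ admits the expansion
\[
    f = \langle f, 1\rangle \cdot 1 + \sum_{\mbf j \in \mcl A^m} \langle f, \xi^{\mbf j}\rangle \xi^{\mbf j},
\]
which is precisely the claimed identity \eqref{fdecomp} once one reads off $f_0 = \langle f, 1\rangle$ and $f_{\mbf j} = \langle f, \xi^{\mbf j}\rangle$.

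Third, for the odd case, I would use the parity $(-\xi)^{\mbf j} = (-1)^{\abs{\mbf j}} \xi^{\mbf j}$. Substituting $\xi \mapsto -\xi$ in the defining sum for $f_{\mbf j}$ and invoking $f(-\xi) = -f(\xi)$ gives $f_{\mbf j} = (-1)^{\abs{\mbf j}+1} f_{\mbf j}$; hence $f_{\mbf j} = 0$ whenever $\abs{\mbf j}$ is even, and similarly $f_0 = 0$. The expansion then reduces to the sum over $\mbf j \in \mcl A^m_o$. There is no genuine obstacle in the argument---the lemma is a purely combinatorial identity---and the only bookkeeping care is to separate the empty index (contributing $f_0$) from the nonempty indices in $\mcl A^m$.
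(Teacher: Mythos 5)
Your proposal is correct and follows essentially the same route as the paper: both introduce the uniform-measure inner product on functions over $\{-1,1\}^m$, recognize $\{1\} \cup \{\xi^{\mbf j}\}_{\mbf j \in \mcl A^m}$ as an orthonormal basis of this $2^m$-dimensional space, and read off the coefficients. You simply fill in the details the paper leaves implicit (the symmetric-difference/reflection argument for orthonormality and the parity argument for the odd case), so the content is the same, just more explicit.
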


\begin{proof}
	Let $\mcl F^m$ be the $2^m$-dimensional space of real-valued functions on $\{-1,1\}^m$. The $2^m$ functions in the collection $\mcl P^m := \{1, (\xi^{\mbf j})_{\mbf j \in \mcl A^m} \}$ are linearly independent elements of $\mcl F^m$, and therefore, their span is equal to it.
	
	For $f,g \in \mcl F^m$, define the inner product
	\[
		\langle f, g \rangle_{\mcl F^m} := \frac{1}{2^m} \sum_{\xi \in \{-1,1\}^m} f(\xi)g(\xi).
	\]
	With respect to $\langle \cdot, \cdot \rangle_{\mcl F^m}$, $\mcl P^m$ becomes an orthonormal basis, so that, for any $f \in \mcl F^m$,
	\[
		f = \sum_{q \in \mcl P^m} \ip{f,q}_{\mcl F^m} q,
	\]
	which is the desired formula. The statements about odd $f$ now follow easily.
\end{proof}

As a consequence of Lemma \ref{L:decomp}, the effective Hamiltonian $\oline{H}: \RR^d \times \{-1,1\}^m$ in \eqref{E:generalcellproblem} takes the form
\[
	\oline{H}(p,\xi) := \sum_{\mbf j \in \mcl A^m_o} \oline{H}^{\mbf j}(p) \xi^{\mbf j},
\]
where the functions $(H^{\mbf j})_{\mbf j \in \mcl A^m_o}$ are defined as in \eqref{effectiveingredients}.

The proof of the following lemma is elementary and thus omitted.

\begin{lemma}\label{L:feeltheBern}
	Let $\{X^j\}_{j=1}^m$ be mutually independent and Rademacher. Then the random variables defined by
	\[
		X^{\mbf j} := X^{j_1} X^{j_2} \cdots X^{j_l} \quad \text{for } \mbf j = (j_1, j_2, \ldots, j_l) \in \mcl A^m
	\]
	 are pairwise independent and Rademacher.
\end{lemma}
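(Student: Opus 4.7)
I would prove this in two steps: first that each $X^{\mbf j}$ is Rademacher, and second that any two of them (with $\mbf i \neq \mbf j$) are independent. Since each $X^{\mbf j}$ is a product of $\pm 1$-valued factors it is itself $\{-1,1\}$-valued, so it suffices to check $\EE[X^{\mbf j}] = 0$. For $\mbf j = (j_1,\ldots,j_l) \in \mcl A^m$, the $j_k$'s are distinct, so the independence of $\{X^j\}_{j=1}^m$ gives
\[
    \EE[X^{\mbf j}] = \prod_{k=1}^l \EE[X^{j_k}] = 0,
\]
which shows $X^{\mbf j}$ is Rademacher.

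For pairwise independence, the key observation is that, for $\mbf i, \mbf j \in \mcl A^m$ with $\mbf i \neq \mbf j$, since $(X^{k})^2 = 1$ for each $k$, we have
\[
    X^{\mbf i} X^{\mbf j} = X^{\mbf i \triangle \mbf j},
\]
where $\mbf i \triangle \mbf j$ is the symmetric difference (thought of as an element of $\mcl A^m$ after ordering). Because $\mbf i \neq \mbf j$, the set $\mbf i \triangle \mbf j$ is nonempty, and by the first step $X^{\mbf i \triangle \mbf j}$ is Rademacher, hence $\EE[X^{\mbf i} X^{\mbf j}] = 0 = \EE[X^{\mbf i}]\EE[X^{\mbf j}]$.

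Finally, I would invoke the standard fact that two uncorrelated $\{-1,1\}$-valued random variables are independent: writing $U = X^{\mbf i}$ and $V = X^{\mbf j}$, for each $(a,b) \in \{-1,1\}^2$,
\[
    \PP(U = a,\, V = b) = \EE\!\left[\tfrac{1 + aU}{2} \cdot \tfrac{1 + bV}{2}\right] = \tfrac{1}{4}\bigl(1 + a \EE[U] + b\EE[V] + ab\EE[UV]\bigr) = \tfrac{1}{4},
\]
so $U$ and $V$ are independent. There is no real obstacle here; the only point to be careful about is that pairwise independence is all that is claimed (and all that is needed), since mutual independence of the $X^{\mbf j}$ fails in general — for instance $X^{(1)} X^{(2)} \cdot X^{(1,2)} = 1$ deterministically.
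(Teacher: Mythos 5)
Your proof is correct, and the paper itself omits the argument ("elementary and thus omitted"), so there is no competing approach to compare against. The symmetric-difference observation $X^{\mbf i}X^{\mbf j} = X^{\mbf i \triangle \mbf j}$ combined with the fact that uncorrelated $\{-1,1\}$-valued variables are independent is exactly the expected route, and your closing remark that mutual independence genuinely fails (so pairwise is all one can hope for) is a worthwhile sanity check.
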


Now let $\oline{u}^\eps$ be the viscosity solution of the equation
\begin{equation}\label{E:intermediatemultiple}
	\oline{u}^\eps_t + \sum_{\mbf j \in \mcl A^{m}_o} \oline{H}^{\mbf j}(D \oline{u}^\eps) \dot \zeta^{\mbf j,\eps}(t,\omega) = 0 \quad \text{in } \RR^d \times (0,\oo) \quad \text{and} \quad \oline{u}^\eps(\cdot,0) = u_0 \quad \text{in } \RR^d,
\end{equation}
where the $\oline{H}^{\mbf j}$'s and $\zeta^{\mbf j}$'s are as in \eqref{effectiveingredients}.

\begin{lemma}\label{L:homogerror}
	Assume \eqref{A:Hs} and \eqref{A:checkerboards}, and let $u^\eps$ and $\oline{u}^\eps$ be the solutions of respectively \eqref{E:multiplepaths} and \eqref{E:intermediatemultiple}. Then, for any $L > 0$, there exists $C = C_L > 0$ such that, with probability one, whenever $\nor{Du_0}{\oo} \le L$, $\eps > 0$ and $T > 0$,
	\[
		\sup_{(x,t) \in \RR^d \times [0,T]} \abs{ u^\eps(x,t) - \oline{u}^\eps(x,t)} \le C(1+T)\eps^{1/3 - 2\gamma}.
	\]
	If $d = 1$, or if $d = 2$ and $p \mapsto \oline{H}(p,\cdot)$ is homogenous of degree $q$ for some $q \ge 1$, then the exponent can be replaced with $1 - 2 \gamma$.
\end{lemma}

We do not give the full details of the proof of Lemma \ref{L:homogerror}, as it is a simpler version of Lemma \ref{L:fixedpath} (see also Lemma 5.2 from \cite{Se}). The argument follows by applying results on rates of convergence for periodic homogenization of Hamilton-Jacobi equations (which are listed below) on each of the $O(1/\eps^{2\gamma})$ intervals on which $\xi^\eps(t)$ is constant. The effective equation on each of those intervals is given by
\[
	\oline{u}^\eps_t + \oline{H}(D \oline{u}^\eps, \eps^{-\gamma} \xi(t/\eps^{2\gamma}) ) = 0,
\]
which is exactly equation \eqref{E:intermediatemultiple}.

\begin{lemma}\label{L:periodicerrorest}
	Assume that $H$ is coercive in the gradient variable, periodic in the space variable, and locally Lipschitz. Let $u^\eps$ and $\oline{u}$ be the solutions of the initial value problems
	\[
		\left\{
		\begin{split}
		&u^\eps_t + H\pars{Du^\eps,\frac{x}{\eps}} = 0 \quad \text{and} \quad \oline{u}_t + \oline{H}(D \oline{u}) = 0 \quad \text{in } \RR^d \times (0,\oo), \quad \text{and} \\
		&u^\eps(\cdot,0) = \oline{u}(\cdot,0) = u_0 \quad \text{in } \RR^d.
		\end{split}
		\right.
	\]
	
	\begin{enumerate}[(a)]
	
	\item (Capuzzo-Dolcetta, Ishii \cite{CDI}) For all $L > 0$, there exists $C = C_L > 0$ such that, if $\nor{Du_0}{\oo} \le L$, then, for all $T > 0$,
	\begin{equation}\label{rate}
		\sup_{(x,t) \in \RR^d \times [0,T]} \abs{ u^\eps(x,t) - \oline{u}(x,t)} \le C(1+T)\eps^{1/3}.
	\end{equation}
	The exponent can be improved from $1/3$ to $1$ if $u_0(x) = p \cdot x$ for some fixed $p \in \RR^d$.
	
	\item (Mitake, Tran, Yu \cite{MTY}) If, in addition, $d=1$ and $p \mapsto H(p,\cdot)$ is convex, or if $d = 2$ and $p \mapsto H(p,\cdot)$ is convex and positively homogenous of some degree $q \ge 1$, then the exponent $1/3$ in \eqref{rate} can be replaced with $1$.
	\end{enumerate}
\end{lemma}

\begin{proof}[Proof of Theorem \ref{T:generalmultpaths}]
	Because the solution operators are contractive in the initial data, it suffices to assume that $u_0 \in C^{0,1}(\RR^d)$.

	The choice of $\gamma$ and Lemma \ref{L:homogerror} imply that, with probability one,
	\[
		\lim_{\eps \to 0} d_s\pars{ u^\eps, \oline{u}^\eps} = 0,
	\]
	where $d_s$ is the metric on $C(\RR^d \times [0,\oo))$ defined in Section \ref{S:theory}.
	
	In view of Lemma \ref{L:feeltheBern}, the path
	\[
		\zeta^\eps := \pars{ \zeta^{\mbf j, \eps} }_{\mbf j \in \mcl A^m_o} \in C\pars{ [0,\oo), \RR^{2^{m-1}} }
	\]
	is a random walk which, as $\eps \to 0$, converges in distribution to a $2^{m-1}$-dimensional Brownian motion $B := \pars{B^{\mbf j}}_{\mbf j \in \mcl A^m_o}$.
	
	For the fixed initial datum $u_0 \in C^{0,1}(\RR^d)$, let
	\[
		S: C\pars{[0,\oo),\RR^{2^{m-1}}} \ni \zeta \mapsto v \in C(\RR^d \times [0,\oo))
	\]
	be the solution operator for the equation
	\[
		dv + \sum_{\mbf j \in \mcl A^{m}_0} \oline{H}^{\mbf j}(D v) \cdot d\zeta^{\mbf j} = 0 \quad \text{in } \RR^d \times (0,\oo) \quad \text{and} \quad v(\cdot,0) = u_0 \quad \text{in } \RR^d.
	\]
	The stability result in Theorem \ref{T:LScriteria} implies that $S$ is continuous, and, therefore, so is the graph map
	 \[
	 	(S,\Id): C\pars{[0,\oo),\RR^{2^{m-1}}} \ni \zeta \mapsto (v, \zeta) \in C(\RR^d \times [0,\oo)) \times C\pars{ [0,\oo), \RR^{2^{m-1}}}.
	 \]
	 The result now follows from the Mapping Theorem and Slutsky's Theorem (see \cite{Bill}). In particular, the Mapping Theorem implies that, if $\oline{u}^\eps$ is the solution of \eqref{E:intermediatemultiple}, then, as $\eps \to 0$, $(\oline{u}^\eps, \zeta^\eps)$ converges in distribution to $(\oline{u},B)$ in $C(\RR^d \times [0,\oo)) \times C\pars{[0,T],\RR^{2^{m-1}}}$. We then conclude by appealing to Slutsky's Theorem.
\end{proof}

\subsection{A one-dimensional example}\label{SS:1dexample}
For $u_0 \in C^{0,1}(\RR)$, $\xi^1,\xi^2: [0,\oo) \times \Omega \to \RR$ as in \eqref{A:checkerboards}, and $f \in C^{0,1}(\TT)$, consider the equation
\begin{equation}\label{E:onedexample}
	u^\eps_t + \frac{1}{\eps^\gamma}   |u^\eps_x| \xi^{1}\pars{ \frac{t}{\eps^{2\gamma}} ,\omega} + \frac{1}{\eps^\gamma} f\pars{ \frac{x}{\eps}} \xi^2\pars{ \frac{t}{\eps^{2\gamma} } ,\omega} = 0 \quad \text{in } \RR \times (0,\oo) \quad \text{and} \quad u^\eps(\cdot,0) = u_0 \quad \text{in } \RR.
\end{equation}
Theorem \ref{T:generalmultpaths} implies that, if $0 < \gamma < 1/2$, then, as $\eps \to 0$, $(u^\eps,\zeta^{1,\eps}, \zeta^{2,\eps})$ converges in distribution to $(\oline{u}, B^1, B^2)$, where $\zeta^{1,\eps}$ and $\zeta^{2,\eps}$ are as in \eqref{zetas}, $B^1$ and $B^2$ are independent Brownian motions, and, for some $\oline{H}^1, \oline{H}^2: \RR \to \RR$, $\oline{u}$ is the unique stochastic viscosity solution of
\begin{equation}\label{E:twopathslimit}
	d \oline{u} + \oline{H}^1(\oline{u}_x) \circ dB^1 + \oline{H}^2(\oline{u}_x) \circ dB^2 = 0 \quad \text{in } \RR \times (0,\oo) \quad \text{and} \quad \oline{u}(\cdot,0) = u_0 \quad \text{in } \RR.
\end{equation}

To compute $\oline{H}^1$ and $\oline{H}^2$, we appeal to the following lemma, whose proof is omitted (see \cite{LPV} for similar computations). Below, define $\langle V \rangle := \int_0^1 V(y)\;dy$ for any $V \in C(\TT)$.

\begin{lemma}\label{L:simplecell}
	Let $F \in C(\TT)$. Then, for any $p \in \RR$, the equation
	\begin{equation}\label{E:simplecell}
		|p + v'(y)| + F(y) = \oline{H}(p) \quad \text{in } \TT
	\end{equation}
	admits a viscosity solution $v \in C(\TT)$ if and only if
	\[
		\oline{H}(p) = \max \left\{ \max_{y \in \TT} F(y), |p| + \langle F \rangle \right\}.
	\]
\end{lemma}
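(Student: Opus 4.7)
The strategy is to identify the ergodic constant by matching upper and lower bounds; write $c^\star := \max\{\max_{\TT} F, \, |p| + \langle F \rangle\}$. For the lower bound $c \ge c^\star$, suppose $v \in C(\TT)$ solves $|p + v'(y)| + F(y) = c$ in the viscosity sense. Standard regularity for coercive Hamilton-Jacobi equations implies $v$ is Lipschitz, so by Rademacher's theorem it is a.e.\ differentiable, and at each such $y$ both the sub- and super-solution inequalities give $|p + v'(y)| + F(y) = c$. This forces $F(y) \le c$ a.e., hence $\max F \le c$ by continuity of $F$. Integrating over $\TT$ and using periodicity of $v$ together with Jensen's inequality yields
\[
c = \int_0^1 \bigl[ |p + v'(y)| + F(y) \bigr]\, dy \ge \left| \int_0^1 (p + v'(y))\, dy \right| + \langle F \rangle = |p| + \langle F \rangle,
\]
so $c \ge c^\star$.

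For the matching upper bound, I would construct an explicit periodic viscosity solution at $c = c^\star$, splitting into two cases. If $c^\star = |p| + \langle F \rangle$ and $p \ne 0$, set $v(y) := \sgn(p) \int_0^y [\langle F \rangle - F(s)]\, ds$; then $v \in C^1(\TT)$ and $p + v'(y) = \sgn(p)(c^\star - F(y))$ has constant sign with magnitude $c^\star - F(y) \ge 0$, so the equation holds classically (the edge case $p = 0$ either reduces to the other case or, when $F \equiv \langle F \rangle$, is solved by $v \equiv 0$). If instead $c^\star = \max F > |p| + \langle F \rangle$, let $y^\star$ be a maximizer of $F$ and, on the circle parametrized from $y^\star$, define $\sigma_t \equiv +1$ on $[y^\star, y^\star + t]$ and $\sigma_t \equiv -1$ on $[y^\star + t, y^\star + 1]$; set $v(y) := \int_{y^\star}^y [\sigma_t(s)(c^\star - F(s)) - p]\, ds$. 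The intermediate value theorem applied to the continuous map $t \mapsto \int_0^1 \sigma_t(s)(c^\star - F(s))\, ds$, whose range sweeps $[-(c^\star - \langle F \rangle),\, c^\star - \langle F \rangle] \ni p$ as $t$ varies in $[0,1]$, selects $t$ making $v$ periodic.

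The viscosity check is then the main subtlety: at $y^\star$ the one-sided values of $v'$ coincide because $c^\star = F(y^\star)$, so $v$ is smooth there; the only genuine corner is $y^\star + t$, where $\sigma_t$ switches from $+1$ to $-1$. This is a concave (downward) kink, so $D^- v(y^\star + t) = \emptyset$ (the supersolution condition is vacuous), and each $q \in D^+ v(y^\star + t)$ lies in the interval between the two one-sided values $-p \pm (c^\star - F(y^\star + t))$, yielding $|p + q| + F(y^\star + t) \le c^\star$ directly. Hence $v$ is a bona fide viscosity solution.

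Finally, to rule out solutions for $c > c^\star$, repeat the Lipschitz argument: $c - F > 0$ everywhere, so at every differentiability point $v'(y) \in \{-p + (c - F(y)),\, -p - (c - F(y))\}$, and any transition between the two branches would be a convex (upward) kink, which violates the supersolution condition (taking $q = -p \in D^- v$ gives $|p + q| + F = F < c$). Thus $v$ is smooth with $v'(y) = -p + \sigma_0(c - F(y))$ for a fixed $\sigma_0 \in \{\pm 1\}$, and $\int_0^1 v'(y)\, dy = 0$ forces $c = |p| + \langle F \rangle \le c^\star$, contradicting $c > c^\star$. The main obstacle is the viscosity bookkeeping at the concave kink in the construction and the exclusion of convex kinks in the uniqueness step; the remainder is one-dimensional calculus.
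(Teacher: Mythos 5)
The paper omits this proof (it cites \cite{LPV} for similar computations), so there is no internal argument to compare against; yours is the natural self-contained route and its overall structure is sound. The lower bound $c \ge \max\{\max F,\, |p| + \langle F \rangle\}$ (a.e.\ differentiability plus integrating and the triangle inequality), and the two corrector constructions at $c = c^\star$ -- including the IVT selection of the gluing point, the $C^1$ match across $y^\star$ forced by $c^\star - F(y^\star) = 0$, and the admissible concave corner at $y^\star + t$ -- are all correct.

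The exclusion of $c > c^\star$ has a gap. You assert that ``any transition between the two branches would be a convex (upward) kink,'' but that is false: a switch of $v' + p$ from the $+$ branch $c - F(y)$ to the $-$ branch $-(c - F(y))$ as $y$ increases is a jump \emph{down}, i.e.\ a concave kink with $D^- v = \emptyset$, and the supersolution condition is vacuous there. Only the reverse $- \to +$ switch is a convex kink and contradicts the supersolution inequality via $q = -p$. The conclusion that $\sigma_0$ is constant still holds, but the reason is topological and should be stated: since $\TT$ is a circle, the sign of $v' + p$ must return to its starting value after one full loop, so any concave $+ \to -$ switch would force a matching $- \to +$ switch somewhere, which is forbidden; hence there are no switches. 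You should also justify that sign changes of $v'+p$ occur at points where one-sided derivatives exist (e.g.\ invoke semiconcavity of viscosity solutions of convex Hamilton--Jacobi equations, since $v$ is a priori only Lipschitz), or argue directly at Lebesgue density points: if $y_-$ is a density point of $\{v'+p < 0\}$ lying to the left of a density point $y_+$ of $\{v'+p > 0\}$, then $v + py$ is strictly decreasing through $y_-$ and strictly increasing through $y_+$, so its minimum over $[y_-, y_+]$ is interior, giving $-p \in D^- v$ there and the desired contradiction. (A cleaner alternative that avoids the corner analysis altogether is to derive uniqueness of the ergodic constant from the time-dependent comparison principle: a corrector at constant $c$ yields the solution $v(x) - ct$ of $u_t + |p + u_x| + F = 0$, and two such with $c_1 \ne c_2$ would contradict the bounded-difference property.)
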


Using the formulae in \eqref{effectiveingredients} and Lemma \ref{L:simplecell}, with either $f$ or $-f$ taking the place of $F$, we explicitly compute $\oline{H}^1$ and $\oline{H}^2$, splitting into two cases depending on whether
\[
	\langle f \rangle > \frac{ \max f - \max f}{2} \quad \text{or} \quad \langle f \rangle < \frac{\max f - \min f}{2},
\]
which we refer to by saying that $f$ skews respectively upwards or downwards.

If $f$ skews upwards, then $0 \le \max f - \langle f \rangle < \langle f \rangle - \min f$,
\[
	\oline{H}^1(p) =
	\begin{dcases}
		\frac{\max f - \min f}{2} & \text{if } |p| \le \max f - \langle f \rangle, \\
		\frac{1}{2} |p| + \frac{1}{2}\pars{ \langle f \rangle - \min f} & \text{if } \max f - \langle f \rangle < |p| \le \langle f \rangle - \min f, \\
		|p| & \text{if } |p| > \langle f \rangle - \min f,
	\end{dcases}
\]
and
\[
	\oline{H}^2(p) =
	\begin{dcases}
		\frac{\max f + \min f}{2} & \text{if } |p| \le \max f - \langle f \rangle, \\
		\frac{1}{2} |p| + \frac{1}{2}\pars{ \langle f \rangle + \min f} & \text{if } \max f - \langle f \rangle < |p| \le \langle f \rangle - \min f, \\
		\langle f \rangle & \text{if } |p| > \langle f \rangle - \min f.
	\end{dcases}
\]
If $f$ skews downwards, then $0 \le \langle f \rangle - \min f < \max f - \langle f \rangle$,
\[
	\oline{H}^1(p) =
	\begin{dcases}
		\frac{\max f - \min f}{2} & \text{if } |p| \le \langle f \rangle - \min f, \\
		\frac{1}{2} |p| + \frac{1}{2}\pars{ \max f - \langle f \rangle} & \text{if } \langle f \rangle - \min f < |p| \le \max f - \langle f \rangle , \\
		|p| & \text{if } |p| > \max f - \langle f \rangle,
	\end{dcases}
\]
and
\[
	\oline{H}^2(p) =
	\begin{dcases}
		\frac{\max f + \min f}{2} & \text{if } |p| \le \langle f \rangle - \min f, \\
		\frac{1}{2} |p| + \frac{1}{2}\pars{\max f -  \langle f \rangle } & \text{if } \langle f \rangle - \min f < |p| \le \max f - \langle f \rangle , \\
		\langle f \rangle & \text{if } |p| > \max f - \langle f \rangle.
	\end{dcases}
\]

\subsection{Interfacial motions}

Theorem \ref{T:generalmultpaths} can be used to prove Theorem \ref{T:introlevelset} from the Introduction, concerning the first-order, level-set problem
\begin{equation}\label{E:generallevelset}
	u^\eps_t + \frac{1}{\eps^\gamma} A\pars{ \frac{x}{\eps}, \frac{t}{\eps^{2\gamma}},\omega } |Du^\eps| = 0 \quad \text{in } \RR^d \times (0,\oo) \quad \text{and} \quad u^\eps(\cdot,0) = u_0 \quad \text{in } \RR^d,
\end{equation}
where
\begin{equation} \label{A:multiplefrontspeeds}
	\left\{
		\begin{split}
			&A(y,t,\omega) := \sum_{i=1}^m a^i(y) \xi^i(t,\omega) \quad \text{for } (y,t,\omega) \in \TT^d \times [0,\oo) \times \Omega, \\
			&\xi^i \text{ satisfies \eqref{A:checkerboards} and } a^i \in C^{0,1}(\TT^d) \quad \text{for all } i = 1,2,\ldots,m, \quad \text{and} \\
			&\sum_{k=1}^m a^k \xi^k \ne 0 \text{ on $\TT^d$ for all $\xi \in \{-1,1\}^m$.}
		\end{split}
	\right.
\end{equation}
The Hamiltonians $H^i(p,x) := a^i(x)|p|$ then satisfy \eqref{A:Hs}. In this case, the effective Hamiltonian $\oline{H}$ given by \eqref{E:generalcellproblem} is positively homogenous in the gradient variable, and, from the formula in \eqref{effectiveingredients}, so are each of the $\oline{H}^{\mbf j}$ for $\mbf j \in \mcl A^m_o$. Therefore, each $\oline{H}^{\mbf j}$ has the form
\[
	\oline{H}^{\mbf j}(p) := \oline{a}^{\mbf j} \pars{ \frac{p}{|p|} } |p| \quad \text{for some } \oline{a}^{\mbf j} : S^{d-1} \to \RR.
\] 
For some independent Brownian motions $(B^{\mbf j})_{\mbf j \in \mcl A^m_o}$, the limiting equation is then
\[
	d\oline{u} + \sum_{\mbf j \in \mcl A^m_o} \oline{a}^{\mbf j} \pars{ \frac{D \oline{u}^\eps}{ \abs{ D \oline{u}^\eps} } } \abs{ D \oline{u}^\eps} \circ dB^{\mbf j} = 0 \quad \text{in } \RR^d \times (0,\oo) \quad \text{and} \quad \oline{u}(\cdot,0) = u_0 \quad \text{in } \RR^d.
\]

\subsection{A nonconvex example} We now turn to Theorem \ref{T:nonconvex} from the introduction. The relevant objects are defined just as in the work of Luo, Tran, and Yu \cite{LTY}.

Let $F: \RR \to \RR$ be a smooth, even function such that
\begin{equation}\label{A:nonconvexF}
	\left\{
	\begin{split}
		&\text{for some $0 < \theta_3 < \theta_2 < \theta_1$,}\\
		&F(0) = 0, \; F(\theta_2) = \frac{1}{2}, \; F(\theta_1) = F(\theta_3) = \frac{1}{3}, \; \lim_{r \to \oo} F(r) = +\oo, \\
		&\text{$F$ is strictly increasing on $[0,\theta_2] \cup [\theta_1, +\oo)$ and strictly decreasing on $[\theta_2, \theta_1]$,}
	\end{split}
	\right.
\end{equation}
and, for $0 < s < 1$, define
\begin{equation}\label{A:crookedV}
	V_s(x) := 
	\begin{dcases}
		\frac{x}{s} & \text{if } 0 \le x \le s \text{ and} \\
		\frac{1-x}{1-s} & \text{if } s < x \le 1
	\end{dcases}
\end{equation}
and extend $V_s$ to be $1$-periodic on all of $\RR$. 

For $\xi^1$ and $\xi^2$ as in \eqref{A:checkerboards}, we consider the equation
\begin{equation}\label{E:nonconvex}
	u^\eps_t + \frac{1}{\eps^\gamma}  F(u^\eps_x) \xi^1 \pars{ \frac{t}{\eps^{2\gamma}} ,\omega} + \frac{1}{\eps^\gamma} V_s \pars{ \frac{x}{\eps} } \xi^2\pars{ \frac{t}{\eps^{2\gamma}} ,\omega}  = 0 \quad \text{in } \RR \times (0,\oo) \quad \text{and} \quad u^\eps(\cdot,0) = u_0 \quad \text{in } \RR.
\end{equation}
If $F$ is replaced with a convex function, then \eqref{E:nonconvex} falls within the scope of Theorem \ref{T:generalmultpaths}, and the limiting equation resembles \eqref{E:twopathslimit}. However, the nonconvexity of $F$ and the ``crooked'' structure of $V_s$ for $s \ne 1/2$ imply that the effective Hamiltonian $\oline{H}: \RR \times \{-1,1\}^2 \to \RR$ given by the cell problem
\[
	F(p + v'(y))\xi^1 + V_s(y)\xi^2 = \oline{H}(p,\xi^1,\xi^2) \quad \text{in } \RR
\]
is not fully $1$-homogenous in the $\{-1,1\}^2$-variable. As a result, in the decomposition
\[
	\oline{H}(\cdot,\xi^1,\xi^2) = \oline{H}^0 + \oline{H}^{1}\xi^1 + \oline{H}^2 \xi^2 + \oline{H}^{\{1,2\}} \xi^1 \xi^2 \quad \text{for } \xi^1,\xi^2 \in \{-1,1\}
\]
given by Lemma \ref{L:decomp}, the term $\oline{H}^{\{1,2\}}$ does not vanish. However, it is the case, as we show below, that $\oline{H}^0 = 0$, so that \eqref{E:nonconvex} does not exhibit ballistic behavior as $\eps \to 0$.

Let $\oline{H}_s$ be the effective Hamiltonian associated to the Hamiltonian
\[
	H_s(p,x) := F(p) - V_s(x).
\]
In Appendix \ref{S:effective}, we obtain an explicit formula for $\oline{H}_s$, and deduce, in particular, that $\oline{H}_s$ satisfies \eqref{A:Hdiffconv}. Moreover, as was established in \cite{LTY}, we have $\oline{H}_s \ne \oline{H}_{s'}$ unless $s = s'$.

Simple manipulations of the cell problem, properties of viscosity solutions, and the symmetry properties
\[
	V_s(1-x) = V_{1-s}(x) \quad \text{and} \quad V_s(x) = 1 - V_{1-s}(x-s) \quad \text{for all } s \in (0,1), x \in \TT
\]
lead to the identities
\[
	\left\{
	\begin{split}
		&\oline{H}(\cdot,1,1) = \oline{H}_{1-s} + 1, \\
		&\oline{H}(\cdot,1,-1) = \oline{H}_s, \\
		&\oline{H}(\cdot,-1,1) = -\oline{H}_{1-s}, \quad \text{and}\\
		&\oline{H}(\cdot,-1,-1) = -\oline{H}_s -1,
	\end{split}
	\right.
\]
and so Lemma \ref{L:decomp} gives
\[
	\left\{
	\begin{split}
	&\oline{H}^0 = 0,\\
	&\oline{H}^1 = \frac{\oline{H}_s + \oline{H}_{1-s}+ 1}{2},\\
	&\oline{H}^2 = \frac{1}{2}, \quad \text{and}\\
	&\oline{H}^{\{1,2\}} = \frac{\oline{H}_{1-s} - \oline{H}_s}{2}.
	\end{split}
	\right.
\]
A similar proof as for Theorem \ref{T:generalmultpaths} then gives the following:

\begin{theorem}
Assume $0 < \gamma < 1/6$, $u_0 \in UC(\RR)$, $F$ and $V_s$ are as in \eqref{A:nonconvexF} and \eqref{A:crookedV}, $\xi^1$ and $\xi^2$ are as in \eqref{A:checkerboards}, the paths $(\zeta^{\mbf j,\eps})_{\mbf j \in \mcl A^2}$ are defined as in \eqref{effectiveingredients}, and $u^\eps$ is the solution of \eqref{E:nonconvex}. Then, as $\eps \to 0$, $(u^\eps, (\zeta^{\mbf j,\eps})_{\mbf j \in \mcl A^2} )$ converges locally uniformly and in distribution to $(\oline{u}, (B^{\mbf j} )_{\mbf j \in \mcl A^2} )$, where $\oline{u}$ is the unique stochastic viscosity solution of
\[
	\left\{
	\begin{split}
	&d \oline{u} + \frac{\oline{H}_s(\oline{u}_x) + \oline{H}_{1-s}(\oline{u}_x)+ 1}{2} \circ dB^1 + \frac{1}{2} \circ dB^2 + \frac{\oline{H}_{1-s}(\oline{u}_x) - \oline{H}_s(\oline{u}_x)}{2} \circ dB^{\{1,2\}} = 0 \quad \text{in } \RR \times (0,\oo) \quad \text{and}\\
	&\oline{u}(\cdot,0) = u_0 \quad \text{in } \RR.
	\end{split}
	\right.
\]
\end{theorem}

To finish this discussion and the proof of Theorem \ref{T:nonconvex}, we mention that the independence of the fields $\xi^1$ and $\xi^2$ is used in the above result, in particular, through the application of Lemma \ref{L:feeltheBern}. Indeed, for a single field $\xi$ satisfying \eqref{A:checkerboards}, consider the equation
\begin{equation}\label{E:nonconvexsinglenoise}
	u^\eps_t + \frac{1}{\eps^\gamma} \pars{ F(u^\eps_x)  - V_s \pars{ \frac{x}{\eps} } } \xi\pars{ \frac{t}{\eps^{2\gamma}} } = 0 \quad \text{in } \RR \times (0,\oo) \quad \text{and} \quad u^\eps(\cdot,0) = u_0 \quad \text{in } \RR.
\end{equation}
This equation is not covered by the result in the single-noise case, due to the fact that \eqref{consistentH} fails if $s \ne 1/2$:
\[
	\oline{(-H_s)} = - \oline{H}_{1-s} \ne - \oline{H}_s.
\]
As a consequence, we have the following:

\begin{theorem}
	Assume $0 < \gamma < 1$, $F$ and $V$ are as in \eqref{A:nonconvexF} and \eqref{A:crookedV}, $\xi$ is as in \eqref{A:checkerboards}, and, for some fixed $p_0 \in \RR$, $u^\eps$ is the solution of \eqref{E:nonconvexsinglenoise} with $u_0(x) = p_0 \cdot x$. Then, with probability one, for all $T > 0$,
	\[
		\lim_{\eps \to 0} \sup_{(x,t) \in \RR \times [0,T]} \abs{ \eps^\gamma u^\eps(x,t) - \frac{ \oline{H}_{1-s}(p_0)- \oline{H}_s(p_0)}{2} t} = 0.
	\]	
\end{theorem}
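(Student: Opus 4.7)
The plan is to introduce an intermediate, spatially homogeneous problem whose solution can be written explicitly, apply the strong law of large numbers to identify its limit, and control the error between the true solution and the intermediate one by iterating the periodic homogenization rate on each interval of constancy of $\xi$.

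I begin by setting $\tilde H_{+1}(p) := \oline{H}_s(p)$ and $\tilde H_{-1}(p) := -\oline{H}_{1-s}(p)$, which are the effective Hamiltonians of $\sigma H_s(\cdot, x/\eps)$ for $\sigma = \pm 1$ (using $\oline{(-H_s)} = -\oline{H}_{1-s}$), and letting $\oline u^\eps$ solve
\[
\oline u^\eps_t + \frac{1}{\eps^\gamma}\tilde H_{\xi(t/\eps^{2\gamma})}(\oline u^\eps_x) = 0 \quad \text{in } \RR\times(0,\oo), \qquad \oline u^\eps(\cdot,0) = p_0 x.
\]
Because $\tilde H$ has no $x$-dependence, the affine ansatz is preserved, and changing variables $\tau = r/\eps^{2\gamma}$ in
\[
\oline u^\eps(x,t) = p_0 x - \int_0^t \frac{1}{\eps^\gamma}\tilde H_{\xi(r/\eps^{2\gamma})}(p_0)\,dr
\]
yields, after multiplication by $\eps^\gamma$,
\[
\eps^\gamma \oline u^\eps(x,t) = \eps^\gamma p_0 x - \eps^{2\gamma}\sum_{k=0}^{N(t)-1}\tilde H_{X_k}(p_0) + O(\eps^{2\gamma}), \qquad N(t) := \lfloor t/\eps^{2\gamma}\rfloor.
\]
The summands $\tilde H_{X_k}(p_0)$ are i.i.d.\ with mean $\tfrac12(\oline{H}_s(p_0) - \oline{H}_{1-s}(p_0))$, so the strong law of large numbers combined with a standard Kolmogorov-type estimate for partial sums yields, almost surely and uniformly in $t \in [0,T]$,
\[
\lim_{\eps\to 0}\left(\eps^\gamma \oline u^\eps(x,t) - \eps^\gamma p_0 x\right) = \frac{\oline{H}_{1-s}(p_0) - \oline{H}_s(p_0)}{2}\,t.
\]
Since $\eps^\gamma p_0 x \to 0$ locally uniformly in $x \in \RR$, this identifies the target limit for $\oline u^\eps$.

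Next, I control $\eps^\gamma\|u^\eps - \oline u^\eps\|_\infty$ by iterating over the intervals $I_k = [k\eps^{2\gamma},(k+1)\eps^{2\gamma})$ on which $\xi \equiv \sigma_k \in \{\pm 1\}$. The rescaling $t' = t/\eps^\gamma$ turns both equations on $I_k$ into standard periodic homogenization problems on a $t'$-interval of length $\eps^\gamma$, with Hamiltonians $\sigma_k H_s(\cdot, x/\eps)$ and $\tilde H_{\sigma_k}$ respectively. Since $\oline u^\eps(\cdot, k\eps^{2\gamma})$ is affine in $x$ with slope $p_0$, the sharp affine-initial-data rate in Theorem \ref{T:periodicerrorest}(a) yields
\[
\nor{T_k \oline u^\eps(\cdot, k\eps^{2\gamma}) - \oline T_k \oline u^\eps(\cdot, k\eps^{2\gamma})}{\oo} \le C\eps,
\]
where $T_k$ and $\oline T_k$ denote the solution operators of the original and intermediate equations on $I_k$ and $C = C_{|p_0|}$. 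Combining with the $L^\infty$-contractivity of $T_k$,
\[
e_{k+1} := \nor{u^\eps(\cdot, (k+1)\eps^{2\gamma}) - \oline u^\eps(\cdot, (k+1)\eps^{2\gamma})}{\oo} \le e_k + C\eps,
\]
so iterating over the $N = \lfloor T/\eps^{2\gamma}\rfloor$ intervals with $e_0 = 0$ gives $e_N \le C T \eps^{1-2\gamma}$. Multiplying by $\eps^\gamma$ produces $\eps^\gamma\|u^\eps - \oline u^\eps\|_\infty \le CT \eps^{1-\gamma} \to 0$ precisely because $\gamma < 1$, and together with the first step this gives the claimed convergence locally uniformly in $x \in \RR$ and uniformly in $t \in [0,T]$.

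The main obstacle is the per-interval rate. Since $F$ is nonconvex, the sharp Mitake-Tran-Yu 1D bound of $\eps^1$ for general Lipschitz initial data is unavailable, and the generic Capuzzo-Dolcetta-Ishii rate of $\eps^{1/3}$ would only allow $\gamma < 1/3$ after accumulation and rescaling. The crux is that the intermediate solution $\oline u^\eps$ remains affine in $x$ for all time, so the homogenization comparison on each $I_k$ is always applied with affine data, for which the $O(\eps)$ rate holds regardless of convexity; this single fact is what opens the full range $\gamma<1$. The ballistic constant itself is nonzero precisely because $\oline{(-H_s)} \ne -\oline{H}_s$ when $s \ne 1/2$, so the two signs of $\xi$ do not cancel and the deterministic drift $\tfrac12(\oline{H}_{1-s}(p_0) - \oline{H}_s(p_0))$ emerges.
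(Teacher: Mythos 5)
Your argument is correct and follows the paper's route exactly: compare $u^\eps$ with the explicit spatially homogeneous solution $\oline u^\eps$ (which stays affine in $x$ with slope $p_0$), identify its drift $\tfrac12\bigl(\oline H_{1-s}(p_0)-\oline H_s(p_0)\bigr)$ by the strong law of large numbers, and iterate the affine-initial-data homogenization rate $O(\eps)$ over the $O(\eps^{-2\gamma})$ constancy intervals of $\xi$ to obtain a total error of $O(\eps^{1-\gamma})$ after the $\eps^\gamma$ rescaling, which vanishes for all $\gamma<1$. The paper's proof only cites Lemma \ref{L:homogerror} and observes that the exponent improves to $1-\gamma$ ``because of the form of the initial datum,'' so your write-up fills in precisely the detail the paper leaves implicit; moreover, your observation that the convergence is locally uniform in $x$ (because of the $\eps^\gamma p_0 x$ term) rather than uniform over all of $\RR$ is a small correction to the paper's stated conclusion.
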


\begin{proof}
The solution $\oline{u}^\eps$ of the initial value problem
\[
	\oline{u}^\eps_t + \frac{1}{\eps^\gamma} \oline{H}\pars{ \oline{u}^\eps_x, \xi \pars{ \frac{t}{\eps^{2\gamma}},\omega}, \xi \pars{ \frac{t}{\eps^{2\gamma}},\omega}} = 0 \quad \text{in } \RR \times (0,\oo) \quad \text{and} \quad \oline{u}^\eps(x,0) = p_0 \cdot x \quad \text{in } \RR
\]
takes the form
\[
	\oline{u}^\eps(x,t) = p_0 \cdot x + \eps^\gamma \int_0^{t/\eps^{2\gamma}} \oline{H}(p_0,\xi(s),\xi(s))ds.
\]
A similar argument as for Lemma \ref{L:homogerror} gives, for some constant $C > 0$,
\[
	\sup_{(x,t) \in \RR \times [0,T]} \abs{ \eps^\gamma u^\eps(x,t) - \eps^\gamma \oline{u}^\eps(x,t)} \le C(1+T)\eps^{1- \gamma}.
\]
Note that the exponent is $1 - \gamma$, rather than $1/3 - \gamma$, because of the form of the initial datum and Lemma \ref{L:periodicerrorest}(a).

Finally, the formula for $\oline{H}$ and the law of large numbers yield, with probability one,
\[
	\lim_{\eps \to 0} \sup_{(x,t) \in \RR^d \times [0,T]} \abs{ \eps^\gamma \oline{u}^\eps(x,t) - \frac{ \oline{H}_{1-s}(p_0)- \oline{H}_s(p_0)}{2} t} = 0,
\]
which establishes the result.
\end{proof}

\subsection{Dependence of the limit on the noise approximation} We return to the equation
\begin{equation}\label{E:onedexampleagain}
	u^\eps_t + \frac{1}{\eps^\gamma}  |u^\eps_x| \xi^{1}\pars{ \frac{t}{\eps^{2\gamma}},\omega } + \frac{1}{\eps^\gamma} f\pars{ \frac{x}{\eps}} \xi^2\pars{ \frac{t}{\eps^{2\gamma}} ,\omega} = 0 \quad \text{in } \RR \times (0,\oo) \quad \text{and} \quad u^\eps(\cdot,0) = u_0 \quad \text{in } \RR,
\end{equation}
but we define the white noise approximations in such a way that the limiting equation has a different law than \eqref{E:twopathslimit}, thus establishing Theorem \ref{T:difffields} from the introduction, together with the computations in subsection \ref{SS:1dexample}.

Let $(X_k, Y_k,Z_k)_{k=0}^\oo$ be a collection of independent, Rademacher random variables, let $0 < b < a$ be such that
\[
	a^2 + b^2 = 2 \quad \text{and} \quad a(\max f - \langle f \rangle) < b (\langle f \rangle - \min f),
\]
and set
\[
	X^1_k := X_k \text{ and } X^2_k := \frac{a+b}{2} Y_k + \frac{a-b}{2} Z_k.
\]
Note that $X^1_k$ and $X^2_k$ are independent for each $k$, and
\begin{equation}\label{unit}
	\EE\left[ X^i_k\right] = 0 \quad \text{and} \quad \EE \left[ X^i_k \right]^2 = 1.
\end{equation}
For $i = 1,2$, define $\zeta^i(0) = 0$ and
\[
	\dot \zeta^i(t,\omega) = \xi^i(t,\omega) := \sum_{k=0}^\oo X^i_k(\omega) \ind_{(k,k+1)}(t) \quad \text{and} \quad \zeta^{i,\eps}(t,\omega) = \eps^\gamma \zeta^i(t/\eps^{2\gamma},\omega),
\]
and, for $\mbf j \in \left\{ \{1\}, \{2\}, \{3\}, \{1,2,3\} \right\}$, define the approximating paths $\zeta^{\mbf j, \eps}(t) := \eps^\gamma \zeta^{\mbf j}(t/\eps^{2\gamma})$, where
\[
	\left\{
	\begin{split}
	&\zeta^{\{1\},\eps} := \zeta^{1,\eps}, \quad \zeta^{\{2\}}(0) = \zeta^{\{3\}}(0) = \zeta^{\{1,2,3\}}(0) := 0, \\
	&\dot \zeta^{\{2\}}(t,\omega) := \sum_{k=0}^\oo Y_k(\omega) \ind_{(k,k+1)}(t), \quad \dot \zeta^{\{3\}}(t,\omega) := \sum_{k=0}^\oo Z_k(\omega) \ind_{(k,k+1)}(t), \quad \text{and}\\
	&\dot \zeta^{\{1,2,3\}}(t,\omega) := \sum_{k=0}^\oo X_k(\omega) Y_k(\omega) Z_k(\omega) \ind_{(k,k+1)}(t).
	\end{split}
	\right.
\]
Equation \eqref{E:onedexampleagain} can then be written as
\begin{equation}\label{E:diffonedexample}
	\left\{
	\begin{split}
	&u^\eps_t +  |u^\eps_x| \dot \zeta^{\{1\},\eps}(t,\omega) + \frac{a+b}{2} f\pars{ \frac{x}{\eps}} \dot \zeta^{\{2\},\eps}(t,\omega) + \frac{a-b}{2} f \pars{ \frac{x}{\eps}} \dot \zeta^{\{3\},\eps}(t,\omega) = 0 \quad \text{in } \RR \times (0,\oo) \quad \text{and} \\
	&u^\eps(\cdot,0) = u_0 \quad \text{in } \RR.
	\end{split}
	\right.
\end{equation}
Applying Theorem \ref{T:generalmultpaths} then gives that, if $0 < \gamma < 1/2$, then, for some independent Brownian motions $B^{\mbf j}$ with $\mbf j \in \{ \{1\}, \{2\}, \{3\}, \{1,2,3\} \}$,
\[
	\pars{ u^\eps, \zeta^{\{1\},\eps}, \zeta^{\{2\},\eps}, \zeta^{\{3\},\eps}, \zeta^{\{1,2,3\},\eps} }
	\xrightarrow{\eps \to 0}
	\pars{ \oline{u}, B^{\{1\}}, B^{\{2\}}, B^{\{3\}}, B^{\{1,2,3\}}} \quad \text{locally uniformly and in distribution},
\]
where $\oline{u}$ is the stochastic viscosity solution of
\begin{equation}\label{E:fourpathlimit}
	\left\{
	\begin{split}
	&d\oline{u} + \oline{H}^{\{1\}}(\oline{u}_x) \circ dB^{\{1\}} + \oline{H}^{\{2\}}(\oline{u}_x) \circ dB^{\{2\}} + \oline{H}^{\{3\}}(\oline{u}_x) \circ dB^{\{3\}} \\
	& \qquad + \oline{H}^{\{1,2,3\}}(\oline{u}_x) \circ dB^{\{1,2,3\}} = 0 \quad \text{in } \RR \times (0,\oo) \quad \text{and} \\
	&\oline{u}(\cdot,0) = u_0 \quad \text{in } \RR.
	\end{split}
	\right.
\end{equation}
The formulae for the effective Hamiltonians are given below, and, as can be checked, the laws of the solutions of \eqref{E:twopathslimit} and \eqref{E:fourpathlimit} differ in general, even when $u_0(x) := p_0 \cdot x$ for some fixed $p_0 \in \RR^d$:
\begin{align*}
	\oline{H}^{\{1\}}(p) :=
	\begin{dcases}
		\frac{a+b}{4} (\max f - \min f) & \text{if } 0 \le |p| \le b ( \max f - \langle f \rangle), \\
		\frac{1}{4}|p| + \frac{a}{4} ( \max f - \min f) + \frac{b}{4} (\ip{f} - \min f) & \text{if } b (\max f - \ip{f}) \le |p| \le a (\max f - \ip{f}), \\
		\frac{1}{2} |p| + \frac{a+b}{4}(\ip{f} - \min f) & \text{if } a (\max f - \ip{f}) \le |p| \le b(\ip{f} - \min f),\\
		\frac{3}{4} |p| + \frac{a}{4} ( \ip{f} - \min f) & \text{if } b(\ip f - \min f) \le |p| \le a (\ip f - \min f), \\
		|p| & \text{if } |p| \ge a (\ip f - \min f),
	\end{dcases}
\end{align*}
\begin{align*}
	\oline{H}^{\{2\}}(p) :=
	\begin{dcases}
		\frac{a+b}{4} (\max f + \min f) & \text{if } 0 \le |p| \le b ( \max f - \langle f \rangle), \\
		\frac{1}{4}|p| + \frac{a}{4} ( \max f + \min f) + \frac{b}{4} (\ip{f} + \min f) & \text{if } b (\max f - \ip{f}) \le |p| \le a (\max f - \ip{f}), \\
		\frac{1}{2} |p| + \frac{a+b}{4}(\ip{f} + \min f) & \text{if } a (\max f - \ip{f}) \le |p| \le b(\ip{f} - \min f),\\
		\frac{1}{4} |p| + \frac{a}{4} ( \ip{f} + \min f) + \frac{b}{2} \ip{f} & \text{if } b(\ip f - \min f) \le |p| \le a (\ip f - \min f), \\
		\frac{a+b}{2} \ip{f} & \text{if } |p| \ge a (\ip f - \min f),
	\end{dcases}
\end{align*}
\begin{align*}
	 \oline{H}^{\{3\}}(p) :=
	\begin{dcases}
		\frac{a-b}{4} (\max f + \min f) & \text{if } 0 \le |p| \le b ( \max f - \langle f \rangle), \\
		-\frac{1}{4}|p| + \frac{a}{4} ( \max f + \min f) - \frac{b}{4} (\ip{f} + \min f) & \text{if } b (\max f - \ip{f}) \le |p| \le a (\max f - \ip{f}), \\
		\frac{a-b}{4}(\ip{f} + \min f) & \text{if } a (\max f - \ip{f}) \le |p| \le b(\ip{f} - \min f),\\
		\frac{1}{4} |p| + \frac{a}{4} ( \ip{f} + \min f) - \frac{b}{2} \ip{f} & \text{if } b(\ip f - \min f) \le |p| \le a (\ip f - \min f), \\
		\frac{a-b}{2} \ip{f} & \text{if } |p| \ge a (\ip f - \min f),
	\end{dcases}
\end{align*}
and
\begin{align*}
	\oline{H}^{\{1,2,3\}}(p) :=
	\begin{dcases}
		\frac{a-b}{4} (\max f - \min f) & \text{if } 0 \le |p| \le b ( \max f - \langle f \rangle), \\
		-\frac{1}{4}|p| + \frac{a}{4} ( \max f - \min f) - \frac{b}{4} (\ip{f} - \min f) & \text{if } b (\max f - \ip{f}) \le |p| \le a (\max f - \ip{f}), \\
		\frac{a-b}{4}(\ip{f} - \min f) & \text{if } a (\max f - \ip{f}) \le |p| \le b(\ip{f} - \min f),\\
		-\frac{1}{4} |p| + \frac{a}{4} ( \ip{f} + \min f) & \text{if } b(\ip f - \min f) \le |p| \le a (\ip f - \min f), \\
		0 & \text{if } |p| \ge a (\ip f - \min f).
	\end{dcases}
\end{align*}

\appendix

\section{Pathwise Hamilton-Jacobi equations} \label{S:pathstability}

We give a brief overview of some facts that are needed in this paper regarding pathwise, or stochastic, viscosity solutions of the initial value problems
\begin{equation} \label{E:pathwise}
	du = H(Du,x) \cdot d\zeta \quad \text{in } \RR^d \times (0,\oo) \quad \text{and} \quad u(\cdot,0) = u_0 \quad \text{in } \RR^d
\end{equation}
and
\begin{equation}\label{E:nospace}
	du = \sum_{i=1}^m H^i(Du) \cdot d\zeta^i \quad \text{in } \RR^d \times (0,\oo) \quad \text{and} \quad u(\cdot,0) = u_0 \quad \text{in } \RR^d,
\end{equation}
where $H \in C(\RR^d \times \RR^d)$, $H^1, H^2, \ldots, H^m \in C(\RR^d)$, $\zeta, \zeta^1, \zeta^2, \ldots, \zeta^m \in C([0,\oo),\RR)$, and $u_0 \in UC(\RR^d)$. For more details, including the definitions of stochastic viscosity sub- and super-solutions and proofs of well-posedness, see \cite{LSbook, LS1, LS2, LS4, LS3, FGLS, Se,SeP,Snotes}.

Both problems \eqref{E:pathwise} and \eqref{E:nospace} fall under the scope of the classical viscosity solution theory if the driving paths are continuously differentiable, or, more generally, have finite total variation. See \cite{CL} for details on the former and \cite{I,LP} for the latter. The theory of pathwise viscosity solutions was developed by Lions and Souganidis \cite{LS2, LS4, Snotes} to study equations like \eqref{E:pathwise} and \eqref{E:nospace} when the driving paths are merely continuous.

The pathwise viscosity solution of \eqref{E:pathwise} or \eqref{E:nospace} may be identified by extending the solution operator for the equation from smooth to continuous paths. More precisely, for a fixed $u_0 \in UC(\RR^d)$, let $S_{u_0} : C^1([0,\oo)) \to C(\RR^d \times (0,\oo))$ denote either the solution operator for \eqref{E:pathwise} or \eqref{E:nospace}, both of which, under certain structural conditions on the Hamiltonians, are well-defined with the classical viscosity solution theory. 

We then say that \eqref{E:pathwise} or \eqref{E:nospace} has a unique extension to continuous paths if
\begin{equation} \label{extension}
	\left\{
	\begin{split}
	&\text{$S_{u_0}: C^1([0,\oo)) \to C(\RR^d \times [0,\oo))$ extends continuously}\\
	&\text{to $C([0,\oo))$ for any $u_0 \in UC(\RR^d)$.}
	\end{split}
	\right.
\end{equation}

As in the classical viscosity theory, there is also a notion of continuous stochastic viscosity solutions that is defined using semi-continuous sub- and super-solutions, for which a comparison principle has been proved in a variety of settings. The existence of the unique solution can then be proved alternatively through Perron's method, as by the author in \cite{SeP}. The notions of pathwise sub- and super-solutions are not used in this work, so we do not focus on them in this section. In view of the stability properties of pathwise stochastic viscosity solutions, it is always the case that the solution of \eqref{E:pathwise} or \eqref{E:nospace} obtained by extending the solution operator is a pathwise viscosity sub- and super-solution.

There is a wide class of Hamiltonians for which the spatially homogenous equation \eqref{E:nospace} is well-posed, as was shown by Lions and Souganidis in \cite{LS2}. In fact, the equation is well-posed if and only if each Hamiltonian is a difference of convex functions. In the context of the homogenization results in the body of this paper, this is important because the effective Hamiltonians need not be smooth in general.

\begin{theorem}[Lions, Souganidis \cite{LS2}] \label{T:LScriteria}
	The solution operator for \eqref{E:nospace} extends continuously in the sense of \eqref{extension} if and only if each Hamiltonian $H^i$ satisfies
	\begin{equation}\label{A:Hdiffconv}
		H = H_1 - H_2 \quad \text{for some convex } H_1, H_2: \RR^d \to \RR.
	\end{equation}
	Moreover, given $L > 0$, there exists $C = C_L > 0$ such that, for all $u_0 \in C^{0,1}(\RR^d)$ with $\nor{Du_0}{\oo} \le L$ and $\zeta_1,\zeta_2 \in C([0,\oo), \RR^m)$, if $S_{u_0}: C([0,\oo),\RR^m) \to C(\RR^d \times [0,\oo))$ is the solution operator for \eqref{E:nospace}, then
	\[
		\sup_{(x,t) \in \RR^d \times [0,T]} \abs{ S_{u_0}(\zeta_1)(x,t) - S_{u_0}(\zeta_2)(x,t)} \le C \max_{t \in [0,T]} \abs{ \zeta_1(t) - \zeta_2(t)}.
	\]
\end{theorem}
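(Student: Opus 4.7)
The plan is to establish three linked statements: the sufficient direction (each DC Hamiltonian admits a continuous extension of $S_{u_0}$), the quantitative Lipschitz-in-path stability estimate, and the necessary direction. The stability estimate subsumes the existence of the extension, because once $S_{u_0}$ is shown to be Lipschitz on $C^1$ paths in sup norm, density of $C^1$ in $C([0,\oo),\RR^m)$ extends it uniquely to continuous paths. The two directions of the equivalence are essentially independent, so I would handle the sufficient/stability pieces first and the necessary direction last.

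For the sufficient direction, fix $u_0 \in C^{0,1}(\RR^d)$ with $\nor{Du_0}{\oo} \le L$ and write each $H^i = H^i_1 - H^i_2$ with $H^i_j$ convex. Convex Hamilton-Jacobi equations preserve the sup norm of the gradient, so only the values of $H^i_j$ on $\{|p|\le L\}$ matter; set $M_L := \sup_{|p|\le L,\, i,\, j}|\nabla H^i_j(p)|$. For a $C^1$ path $\zeta = (\zeta^1,\ldots,\zeta^m)$, decompose each component into its monotone parts $\zeta^i = \zeta^i_+ - \zeta^i_-$, producing a finite partition of $[0,T]$ on each piece of which the equation reduces to a standard convex HJ equation, solvable by Hopf-Lax; composing these semigroups defines $S_{u_0}(\zeta)$, with consistency across refinements of the partition justified by Trotter-Kato for accretive semigroups.

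The heart of the argument is the sup-norm stability $\nor{S_{u_0}(\zeta_1)(t) - S_{u_0}(\zeta_2)(t)}{\oo} \le C_L \max_{[0,t]}|\zeta_1 - \zeta_2|$, which is sharper than the BV bound that falls out naively from the $L^\oo$ control of $H^i_j(Du)$. The plan is to use doubling of variables: sum the pathwise viscosity inequalities for $u := S_{u_0}(\zeta_1)$ and $-v := -S_{u_0}(\zeta_2)$ tested against a quadratic penalty $\alpha|x-y|^2$ and evaluate at a maximizing point. The resulting driving term takes the form $(H^i(p) - H^i(q))(\dot\zeta^i_1 - \dot\zeta^i_2)$ at $(p,q)$ with $|p|,|q| \le L$. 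Integrating by parts in time on each monotonicity interval of each $\zeta^i_j$ transfers the time derivative off $\zeta_1 - \zeta_2$ onto the Hamiltonian functional, whose total variation over $[0,t]$ is controlled by $M_L$. This converts the BV estimate into a sup-norm estimate with $C_L$ depending only on $L$ through $M_L$. Applying this to Cauchy sequences of $C^1$ approximations then extends $S_{u_0}$ continuously to all of $C([0,\oo),\RR^m)$.

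The necessary direction is where I expect the main obstacle. Suppose some $H := H^i$ is not DC. The task is to construct smooth paths $\zeta_n \to 0$ uniformly with $S_{u_0}(\zeta_n) \not\to u_0$ for a suitable $u_0 \in UC(\RR^d)$. Heuristically, rapid oscillations of $\zeta$ force the solution operator to interleave forward and backward evolutions for $H$; when $H$ is DC, the convex decomposition reorganizes the interleaving into finitely many convex building blocks whose compositions are stable (as exploited above), while for non-DC $H$ no such finite decomposition exists. A reasonable strategy is to work in one dimension with piecewise linear or piecewise quadratic $u_0$ on which the alternating max- and min-plus convolution semigroups can be computed explicitly, pick a non-DC $H$ whose second distributional derivative is not a signed measure, and design $\zeta_n$ that resonantly excites those modes to produce an order-one cumulative drift despite $\nor{\zeta_n}{\oo} \to 0$. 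The subtle piece is a structural characterization of non-DC functions (equivalently, functions whose second derivative is not locally of bounded variation in the appropriate sense) rich enough to guarantee such a construction for every non-DC $H^i$.
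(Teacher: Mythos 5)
This theorem is quoted from Lions--Souganidis \cite{LS2}; the paper cites it and gives no proof, so there is nothing internal to compare against. Evaluating your proposal on its own terms, there is a genuine gap in the stability/sufficient direction.

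Your plan is to double variables, isolate an error term of the form $\int_0^t H^i(Dv)\,(\dot\zeta^i_1 - \dot\zeta^i_2)\,ds$, integrate by parts, and assert that the total variation of $s \mapsto H^i_j(Dv(\cdot,s))$ over $[0,T]$ is bounded by $M_L$. That last claim is false for oscillating paths, and it is precisely where the convexity has to do real work. A concrete counterexample: take $d = 1$, $H(p) = |p|$, $u_0(x) = |x|$, and $\zeta$ oscillating $N$ times between $0$ and some fixed $a > 0$. Then for any fixed $|x| < a$, the solution has $|u_x(x,t)|$ jumping between $0$ and $1$ on each oscillation, so $\mathrm{TV}_{[0,T]}\bigl(H(Du(x,\cdot))\bigr) = O(N)$, not $O(1)$. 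Integrating by parts therefore yields a bound of order $N \max_{[0,T]}|\zeta_1 - \zeta_2|$, which does not give the theorem. What actually controls the error is a structural cancellation (or ``reduction'') lemma for convex Hamiltonians: compositions of forward and backward Hopf--Lax operators collapse, so that the solution at time $t$ depends on the path only through a small number of reduced increments, independently of how many times $\zeta$ oscillates. This is the heart of the Lions--Souganidis argument and is absent from your outline. Once each $H^i_j$ is treated separately, the reduction lemma also has to be combined with a commutator estimate across the different convex pieces, which is another nontrivial step that a pure doubling argument does not supply.

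Two smaller points. First, the driving term you wrote, $(H^i(p)-H^i(q))(\dot\zeta^i_1 - \dot\zeta^i_2)$, is not what appears in the doubled inequality; the relevant error term is $H^i(q)(\dot\zeta^i_1 - \dot\zeta^i_2)$ (the difference $H^i(p) - H^i(q)$ multiplies $\dot\zeta^i_1$ and is controlled by the penalization). Second, you acknowledge that the necessary direction is not completed: the key step of exhibiting, for every non-DC Hamiltonian, oscillating paths $\zeta_n \to 0$ with $S_{u_0}(\zeta_n) \not\to u_0$, requires a structural characterization of non-DC functions and an explicit resonance construction, and your sketch is honest that this is left open. As written, the proposal is an incomplete plan rather than a proof of either direction.
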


The nontrivial spatial dependence in \eqref{E:pathwise} makes the question of well-posedness more complicated. It has been proved for certain classes of Hamiltonians (see \cite{Snotes, LSbook, FGLS, Se}). We prove here a quantitative form of \eqref{extension} under less stringent regularity and structural requirements, as long as the Hamiltonian is convex and has uniform growth in the gradient variable:
\begin{equation}\label{A:appHbasic}
	\left\{
	\begin{split}
		&H \in C(\RR^d \times \RR^d), \; p \mapsto H(p,x) \text{ is convex for all $x \in \RR^d$, and}\\
		&\text{there exist convex, increasing functions } \uline{\nu}, \oline{\nu}: [0,\oo) \to \RR \text{ such that}\\
		&\uline{\nu}(|p|) \le H(p,x) \le \oline{\nu}(|p|) \quad \text{for all $(p,x) \in \RR^d \times \RR^d$.}
	\end{split}
	\right.
\end{equation}
For two smooth (or piecewise smooth) paths $\zeta^1,\zeta^2: [0,\oo) \to \RR$ and $u_0^1, u_0^2 \in C^{0,1}(\RR^d)$, consider the viscosity solutions $u^1$ and $u^2$ of
\begin{equation}
	u^j_t = H(Du^j,x) \dot \zeta^j \quad \text{in } \RR^d \times (0,\oo) \quad \text{and} \quad u^j(\cdot,0) = u_0^j \quad \text{in } \RR^d.
\end{equation}

\begin{theorem}\label{T:pathstability}
	Set $L := \max \pars{\nor{Du_0^1}{\oo}, \nor{Du_0^2}{\oo} }$. Then, for all $t > 0$ and for $j = 1, 2$,
	\[
		\nor{Du^j(\cdot,t)}{\oo} \le \uline{\nu}^{-1}\pars{ \oline{\nu}(L) },
	\]
	and, for all $T > 0$,
	\begin{align*}
		\max_{(x,t) \in \RR^d \times [0,T]} \abs{ u^1(x,t) - u^2(x,t)} &\le \max_{x \in \RR^d} \abs{ u_0^1(x) - u_0^2(x)} + \oline{\nu}(L) \max_{t \in [0,T]} \abs{ \zeta^1(t) - \zeta^2(t)} \\
		&+  \uline{\nu}(0)_- \pars{ \max_{t \in [0,T]}\abs{ \zeta^1(t) - \zeta^2(t)} - (\zeta^1(T) - \zeta^2(T))}.
	\end{align*}
\end{theorem}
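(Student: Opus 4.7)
The strategy rests on a sharp pointwise invariant: I will show that throughout $\RR^d\times[0,T]$,
\[
\uline{\nu}(0)\le H(Du^j(x,t),x)\le\oline{\nu}(L),
\]
from which coercivity immediately yields the gradient bound $\|Du^j(\cdot,t)\|_\infty\le M=\uline{\nu}^{-1}(\oline{\nu}(L))$. To establish this, I would split $[0,T]$ into the maximal subintervals on which each $\dot\zeta^j$ has a fixed sign and time-change to unit speed, so that on each piece the equation becomes the autonomous Hamilton-Jacobi equation $u_\tau=H(Du,x)$ (on forward pieces) or $u_\tau=-H(Du,x)$ (on backward pieces, treated symmetrically). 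On a forward piece starting with $\|Du(\cdot,0)\|_\infty\le L$, the functions $u_0(x)+\oline{\nu}(L)\tau$ and $u_0(x)+\uline{\nu}(0)\tau$ are respectively a viscosity super- and sub-solution of $u_\tau=H(Du,x)$: at any test function of either, the test gradient coincides with a sub- or super-differential of the $L$-Lipschitz initial datum, and the bounds in \eqref{A:appHbasic} close the inequality. Comparison yields $u_0+\uline{\nu}(0)\tau\le u(\cdot,\tau)\le u_0+\oline{\nu}(L)\tau$, and autonomy of the equation upgrades this to $u(\cdot,\tau+h)-u(\cdot,\tau)\in[\uline{\nu}(0)h,\oline{\nu}(L)h]$ uniformly in $\tau$. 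Approximating $H$ by smooth Hamiltonians $H_\eps$ gives enough $x$-regularity of the solution for Rademacher's theorem to apply, at which point the equation $u_\tau=H(Du,x)$ combined with the time-Lipschitz bound forces the invariant at a.e.\ point; the invariant propagates across sign changes of $\dot\zeta$, so the gradient estimate is uniform on $[0,T]$ after passing to the limit via pathwise stability.

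For the stability estimate I would compare $u^1$ to the auxiliary function $\phi(x,t):=u^2(x,t)+\alpha(t)$ for an absolutely continuous $\alpha:[0,T]\to\RR$ to be chosen. Since $D\phi=Du^2$, the supersolution condition reduces to $\alpha'(t)\ge H(Du^2,x)(\dot\zeta^1-\dot\zeta^2)(t)$ pointwise in $x$, and the invariant bound on $H(Du^2,\cdot)$ lets me dominate the right-hand side by $\oline{\nu}(L)(\dot\zeta^1-\dot\zeta^2)_+(t)+\uline{\nu}(0)_-(\dot\zeta^1-\dot\zeta^2)_-(t)$. Choosing $\alpha(0)=\sup_x(u_0^1-u_0^2)(x)$ and $\alpha'$ equal to this envelope, comparison yields $u^1\le u^2+\alpha$; the reverse bound is symmetric. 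To replace the resulting total-variation-type bound by the $L^\infty_t$ norm of $\zeta^1-\zeta^2$ appearing in the theorem, I would instead take $\alpha$ to be a running supremum of $\zeta^1-\zeta^2$, using the identity $\int_0^T(\dot\zeta^1-\dot\zeta^2)_+\,ds-\int_0^T(\dot\zeta^1-\dot\zeta^2)_-\,ds=\zeta^1(T)-\zeta^2(T)$ to produce the boundary correction $\max_{[0,T]}|\zeta^1-\zeta^2|-(\zeta^1(T)-\zeta^2(T))$ attached to the coefficient $\uline{\nu}(0)_-$ in the stated estimate.

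The main difficulty is this last step, since a running-supremum $\alpha$ is only Lipschitz and fails the pointwise inequality $\alpha'(t)\ge H(Du^2,x)(\dot\zeta^1-\dot\zeta^2)(t)$ at times between successive extrema of $\zeta^1-\zeta^2$, where $\alpha$ is locally constant while the right-hand side need not vanish. I would resolve this in the viscosity sense: at such times any lower test function of $\phi$ is, up to an additive constant, a lower test function of $u^2$, so that the supersolution property of $u^2$ directly closes the inequality. The sharpness of the pointwise bound $H(Du^j,\cdot)\le\oline{\nu}(L)$, strictly stronger than what $|Du^j|\le M$ alone would give, is what produces the coefficient $\oline{\nu}(L)$ rather than the cruder $\oline{\nu}(M)$ in the stated estimate.
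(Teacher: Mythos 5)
Your approach is genuinely different from the paper's: the paper constructs a Lagrangian distance function $L(x,y,\tau)$ and proves a quantitative Kruzhkov-type doubling estimate (Proposition \ref{P:uvestimate}) in which the paths enter only through the time argument of $L$, after which both conclusions of the theorem drop out by choosing $\lambda$ and $(x,y)$. Your idea of propagating the pointwise invariant $\uline{\nu}(0)\le H(Du^j,\cdot)\le\oline{\nu}(L)$ is a nice way to explain why the sharp coefficient is $\oline{\nu}(L)$ and not $\oline{\nu}(M)$, and the forward-piece propagation by autonomy and comparison is sound. But there are two real gaps.

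First, the propagation of the invariant across a sign change of $\dot\zeta$ is not as automatic as you suggest. After time-changing a backward piece to $u_\tau+H(Du,x)=0$, showing that $u_\star-\oline{\nu}(L)\tau$ is a viscosity subsolution requires $H(p,x_0)\le\oline{\nu}(L)$ for every $p\in D^+u_\star(x_0)$, and the invariant you carry over is only an a.e.\ statement; you cannot pass it to the superdifferential of a merely Lipschitz function without using some one-sided regularity (the solution is semiconvex after a forward piece, semiconcave after a backward one, and this alternation must be exploited explicitly). ``Rademacher plus smoothing'' does not, by itself, convert the a.e.\ bound into the viscosity inequality you need.

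Second, and more seriously, the running-supremum choice of $\alpha$ does not make $\phi=u^2+\alpha$ a supersolution, and the resolution you offer does not repair it. At a time $t_0$ where $\Delta:=\zeta^1-\zeta^2$ is strictly below its running maximum but $\dot\Delta(t_0)>0$, $\alpha$ is locally constant, so a lower test function $\psi$ of $\phi$ gives $\psi-\alpha(t_0)$ as a lower test of $u^2$; the supersolution property of $u^2$ yields $\psi_t\ge H(D\psi,x_0)\dot\zeta^2(t_0)$, whereas the comparison with $u^1$ requires $\psi_t\ge H(D\psi,x_0)\dot\zeta^1(t_0)$, which is strictly larger whenever $H(D\psi,x_0)>0$. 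So the supersolution inequality genuinely fails there, and a comparison principle cannot be invoked. The naive envelope $\alpha'\ge\oline{\nu}(L)\dot\Delta_+ +\uline{\nu}(0)_-\dot\Delta_-$ only gives a total-variation bound, not the sup-norm bound of the theorem; upgrading to the sup-norm estimate is precisely the cancellation-of-oscillations phenomenon, and it is this that the paper's Lagrangian construction $\Phi_\lambda(x,y,t)=L(x,y,\tfrac1\lambda+\zeta_t-\eta_t)$ captures (the time argument of $L$ can both increase and decrease, and $L$ is evaluated at the current value rather than at a one-sided running extremum). As written, your argument proves the weaker total-variation estimate but not the statement.
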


We remark that a similar result was obtained by Gassiat, Gess, Lions, and Souganidis \cite{GGLS} using slightly different methods, as a tool to study some finer properties of solutions, such as the cancellation of oscillations and speed of propagation.

Both results in Theorem \ref{T:pathstability} follow from the next proposition. The hypotheses require more regularity for the Hamiltonian than is specified by \eqref{A:appHbasic}. The proof of Theorem \ref{T:pathstability} then involves a further regularization of $H$, and the result will follow upon obtaining estimates that do not depend on the regularization parameter.

The proof below uses similar strategies as those in \cite{FGLS, Se, LSbook}.
 
\begin{proposition}\label{P:uvestimate}
	Assume that $H$ satisfies \eqref{A:appHbasic},
	\begin{equation} \label{A:Hregularunifconvex}
		H \in C^2_b(B_R \times \RR^d) \quad \text{for all $R > 0$, and} \quad D^2_p H \quad \text{is strictly positive.}
	\end{equation}
	 For $u_0, v_0 \in UC(\RR^d)$ and $\zeta, \eta \in C^1([0,\oo))$ with $\zeta_0 = \eta_0$, let $u$ be a sub-solution of
	\begin{equation*}
			u_t = H(Du, x)\dot \zeta(t) \quad \text{in } \RR^d \times (0,\oo), \qquad u(\cdot,0) = u_0 \quad \text{on } \RR^d,
	\end{equation*}
	and $v$ a super-solution of
	\begin{equation*}
			v_t = H(Dv, x)\dot \eta(t) \quad \text{in } \RR^d \times (0,\oo), \qquad v(\cdot,0) = v_0 \quad \text{on } \RR^d.
	\end{equation*}
	Then, for all $T > 0$ and $0 < \lambda < \pars{ \max_{0 \le t \le T} (\zeta_t - \eta_t)_-}^{-1}$,
	\begin{align*}
		\sup_{(x,y,t) \in \RR^d \times \RR^d \times [0,T]} &\pars{ u(x,t) - v(y,t) - \pars{ \frac{1}{\lambda} + \zeta_t - \eta_t} \uline{\nu}^* \pars{ \frac{\lambda|x-y|}{1+\lambda (\zeta_t - \eta_t)}} } \\
		&\le \sup_{(x,y) \in \RR^d \times \RR^d} \pars{ u_0(x) - v_0(y) - \frac{1}{\lambda}\oline{\nu}^*(\lambda|x-y|)}.
	\end{align*}
\end{proposition}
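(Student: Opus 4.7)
My plan is a viscosity-theoretic doubling of variables in which the function $\phi(x,y,t) := \tau(t)\,\uline{\nu}^{*}\bigl(|x-y|/\tau(t)\bigr)$, with $\tau(t) := 1/\lambda + \zeta_t - \eta_t$, serves directly as the test function.  The crucial motivation is the Hamilton--Jacobi identity
\[
\phi_t(x,y,t) + \bigl(\dot\zeta(t) - \dot\eta(t)\bigr)\,\uline{\nu}\bigl(|D_x\phi(x,y,t)|\bigr) \;=\; 0
\quad \text{whenever $x\ne y$,}
\]
together with $D_x\phi(x,y,t) = -D_y\phi(x,y,t)$, which follows by direct computation from the Legendre--Fenchel identity $\uline{\nu}^{*}(s) - s\,(\uline{\nu}^{*})'(s) = -\uline{\nu}((\uline{\nu}^{*})'(s))$ applied to the space-time scaling $K(\tau,z) := \tau\,\uline{\nu}^{*}(|z|/\tau)$.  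The function $K$ is precisely the Hopf--Lax envelope solving $\partial_\tau K + \uline{\nu}(|D_z K|) = 0$ with singular data at $\tau = 0$.

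I would argue by contradiction, assuming the left-hand side of the proposition exceeds the right-hand side by some $c > 0$.  Since at $t = 0$ the left-hand side equals $\sup_{x,y}[u_0(x) - v_0(y) - (1/\lambda)\uline{\nu}^{*}(\lambda|x-y|)]$, which is bounded above by the right-hand side of the proposition by the trivial inequality $\uline{\nu}^{*} \ge \oline{\nu}^{*}$ (which is dual to $\uline{\nu} \le \oline{\nu}$), the violation must occur at some $t > 0$.  To extract an interior maximum I would consider
\[
\Phi_{\eps,\delta,\alpha}(x,y,t,s) \;:=\; u(x,t) - v(y,s) - \phi\!\left(x,y,\tfrac{t+s}{2}\right) - \frac{(t-s)^2}{\eps} - \delta t - \alpha\bigl(|x|^2 + |y|^2\bigr),
\]
for small parameters $\eps, \delta, \alpha > 0$.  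The $\alpha$-penalty localizes in space, and the standard doubling $(t-s)^2/\eps$ allows one to apply sub- and super-solution inequalities separately at two time points before collapsing them.  Passing $\eps \to 0$ yields a common time $t_0 > 0$ and limit points $(x_0, y_0)$; applying the viscosity sub-solution inequality for $u$ and super-solution inequality for $v$, combining with the identity above, and sending $\alpha \to 0$, one obtains, with $q := D_x\phi(x_0,y_0,t_0) = -D_y\phi(x_0,y_0,t_0)$,
\[
\delta \;\le\; \dot\zeta(t_0)\bigl[H(q,x_0) + \uline{\nu}(|q|)\bigr] - \dot\eta(t_0)\bigl[H(q,y_0) + \uline{\nu}(|q|)\bigr].
\]

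The technical heart is to refute this inequality.  This is where the assumptions on $H$ come into play in an essential way: the $C^2_b$ regularity supplies a Lipschitz modulus $|H(q,x_0) - H(q,y_0)| \le C|x_0 - y_0|$ on bounded gradient ranges, and the strict convexity $D^2_p H > 0$ yields semi-concavity of $u$ and $v$, which together with the identity $|q| = (\uline{\nu}^{*})'(|x_0-y_0|/\tau(t_0))$ and the coercivity properties of $\uline{\nu}^{*}$ force $|x_0 - y_0|$ to be controllably small at the maximum.  Absorbing the residual spatial discrepancy in $H(q,x_0) - H(q,y_0)$ into terms of order $|x_0-y_0| + \alpha$, and sending $\alpha \downarrow 0$ followed by $\delta \downarrow 0$, produces the desired contradiction.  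I expect this absorption step to be the main obstacle: one must carefully balance the coercivity of the penalty $\uline{\nu}^{*}$ against the bounds on $|q|$ and the Lipschitz control on $H$, in order to extract a useful smallness of $|x_0-y_0|$ uniformly in all parameters.  Once this is done, monotonicity of $t \mapsto \sup_{x,y}[u(x,t) - v(y,t) - \phi(x,y,t)]$ is established, and the bound in the proposition follows by using $\uline{\nu}^{*} \ge \oline{\nu}^{*}$ at the initial time, as noted above.
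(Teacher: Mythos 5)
Your plan has a genuine gap at exactly the step you flag as "the technical heart," and it cannot be closed. The obstruction is that the penalty $\phi(x,y,t) = \tau(t)\,\uline{\nu}^{*}(|x-y|/\tau(t))$ is a supersolution of the doubled equation $z_t = H(D_x z,x)\dot\zeta - H(-D_y z,y)\dot\eta$ only when $H$ is spatially homogeneous and equal to $\uline{\nu}(|\cdot|)$. In general the inequality you arrive at,
\[
\delta \;\le\; \dot\zeta(t_0)\bigl[H(q,x_0) + \uline{\nu}(|q|)\bigr] - \dot\eta(t_0)\bigl[H(q,y_0) + \uline{\nu}(|q|)\bigr],
\]
cannot be refuted: even when $x_0 = y_0$, the right-hand side reduces to $(\dot\zeta(t_0) - \dot\eta(t_0))\bigl[H(q,x_0) + \uline{\nu}(|q|)\bigr]$, and there is no control over $H(q,x_0) - \uline{\nu}(|q|) \ge 0$ (it is a gap between the Hamiltonian and its lower envelope, not something that shrinks as $|x_0-y_0|\to 0$), nor over the sign of $\dot\zeta - \dot\eta$. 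The Lipschitz bound $|H(q,x_0)-H(q,y_0)| \le C|x_0-y_0|$ controls only the part of the residual that comes from moving between $x_0$ and $y_0$; it does nothing about the $O(1)$ contribution $[H(q,\cdot)-\uline{\nu}(|q|)](\dot\zeta - \dot\eta)$, which survives even in the diagonal limit. Concretely, if $H(p,x)=\tfrac12|p|^2 + a$ with $a>0$ (so $\uline{\nu}=\oline{\nu}$ but $\uline{\nu}(0)=a>0$) and $\dot\zeta>\dot\eta$, the residual is strictly positive and bounded away from zero, and your contradiction never materializes.

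This is why the paper does not use the scalar Hopf--Lax kernel $\tau\uline{\nu}^{*}(|z|/\tau)$ as the penalty but a genuinely $x$-dependent two-point action functional
\[
L(x,y,\tau) := \inf\left\{ \int_0^\tau H^*(-\dot\gamma_s,\gamma_s)\,ds : \gamma_0 = x,\ \gamma_\tau = y\right\},
\]
built from the Legendre transform of the \emph{actual} Hamiltonian $H(\cdot,x)$, not of $\uline{\nu}$. Lemma \ref{L:convexdistancefunction} shows that $L$ satisfies both equations $L_\tau = \pm H(D_xL,x)$ and $L_\tau = \pm H(-D_yL,y)$ simultaneously, so that $\Phi_\lambda(x,y,t) := L(x,y,\tau(t))$ with $\tau(t) = 1/\lambda \mp (\zeta_t - \eta_t)$ is an exact solution of the doubled equation \eqref{E:mixeddoubled}; the cancellation against $H\dot\zeta - H\dot\eta$ is then an identity, with no uncontrolled residual. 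The role of the two envelopes $\uline{\nu}^*$ and $\oline{\nu}^*$ is only to sandwich $L$ pointwise (Lemma \ref{L:convexdistancefunction}\eqref{L:Lcoercive}), so that after the comparison step one can replace $L$ by $\tau\uline{\nu}^*(|z|/\tau)$ on the left and $\tau\oline{\nu}^*(|z|/\tau)$ on the right to obtain the explicit estimate in the statement; they are not themselves suitable test functions. The other nontrivial ingredient, which your plan also omits, is that $L$ is not $C^1$, so one must justify using it as a test function; the paper does this via the semiconcavity of $L$ in space (Lemma \ref{L:Ldiff}), which is where the strict convexity $D^2_p H > 0$ is actually exploited --- not to obtain semiconcavity of $u$ and $v$ as you suggest.
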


Equipped with Proposition \ref{P:uvestimate}, we proceed with the

\begin{proof}[Proof of Theorem \ref{T:pathstability}]
	{\it Step 1.} Assume first that $H$ satisfies \eqref{A:Hregularunifconvex} in addition to \eqref{A:appHbasic}. Applying Proposition \ref{P:uvestimate} to the case $u = v = u^1$ and $\zeta = \eta = \zeta^1$ yields, for all $(x,y,t) \in \RR^d \times \RR^d \times (0,\oo)$, 
	\begin{align*}
		u^1(x,t) - u^1(y,t) &\le \inf_{\lambda > 0} \left\{ \frac{1}{\lambda} \uline{\nu}^*(\lambda|x-y|) + \sup_{s \ge 0 } \left\{ Ls - \frac{1}{\lambda}\oline{\nu}^*(\lambda s) \right\} \right\}\\
		&= \inf_{\lambda > 0} \left\{ \frac{ \uline{\nu}^*(\lambda|x-y|) + \oline{\nu}(L)}{\lambda} \right\}
		= \uline{\nu}^{-1} \pars{ \oline{\nu}(L)} |x-y|.
	\end{align*}
	Thus $\nor{Du^1(\cdot,t)}{\oo} \le \uline{\nu}^{-1} \pars{ \oline{\nu}(L)}$, and similarly for $u^2$.
	
	Now setting $(u,v,\zeta,\eta) := (u^1,u^2,\zeta^1,\zeta^2)$ in Proposition \ref{P:uvestimate} gives
	\[
		u^1(x,t) - u^2(x,t) \le \pars{ \frac{1}{\lambda}  - (\zeta^1_t - \zeta^2_t)} \uline{\nu}^*(0) + \max_{x \in \RR^d} \abs{ u^1_0(x) - u^2_0(x)} + \frac{1}{\lambda} \oline{\nu}(L).
	\]
	The claim follows upon choosing $\lambda = ( \max_{s \in [0,t]} \abs{ \zeta^1_s - \zeta^2_s} )^{-1}$ and using the fact that
	\[
		\uline{\nu}^*(0) = - \min_{r \ge 0} \uline{\nu}(r) = - \uline{\nu}(0) \le \uline{\nu}(0)_-.
	\]
	
	{\it Step 2.} We now return to the general case, where $H$ satisfies only \eqref{A:appHbasic}. Let $\phi \in C^2(\RR^d)$ be nonnegative and supported in $B_1(0)$ with $\int \phi = 1$, and, for $\rho > 0$, define
	\[
		\phi_\rho(z) := \frac{1}{\rho^d} \phi\pars{ \frac{z}{\rho}}
	\]
	and
	\[
		H_\rho(p,x) := \rho |p|^2 + \iint_{\RR^d \times \RR^d} H(q,y)\phi_\rho(p-q) \phi_\rho(x-y)\,dq\,dy.
	\]
	It is straightforward to verify that $\lim_{\rho \to 0} H_\rho = H$ locally uniformly, and $H_\rho$ satisfies both \eqref{A:appHbasic} and \eqref{A:Hregularunifconvex} with the growth functions
	\[
		\oline{\nu}_\rho(s) := \rho s^2 + \oline{\nu}(s + \rho) \quad \text{and} \quad \uline{\nu}_\rho(s) := \rho s^2 + \uline{\nu}\pars{ (s - \rho)_+}.
	\]
	
	Let $u^1_\rho$ and $u^2_\rho$ be as in the statement of Theorem \ref{T:pathstability} for the Hamiltonian $H_\rho$. As proved above, $u^1_\rho$ and $u^2_\rho$ satisfy the Lipschitz bound and stability estimate for $\oline{\nu}_\rho$ and $\uline{\nu}_\rho$. Classical arguments from the theory of viscosity solutions yield the local uniform convergence, as $\rho \to 0$, of $u^j_\rho$ to $u^j$ for $j = 1,2$, where $u^j$ are as in the statement of Theorem \ref{T:pathstability} for the Hamiltonian $H$. Since $\oline{\nu}_\rho$ and $\uline{\nu}_\rho$ converge, as $\rho \to 0$, to $\oline{\nu}$ and $\uline{\nu}$, the proof is complete.
\end{proof}

The rest of this section is devoted to the proof of Proposition \ref{P:uvestimate}. The result is a generalization of Proposition A.2 in \cite{Se}.

For $x,y \in \RR^d$ and $\tau > 0$, define
\[
	\mcl A(x,y,\tau) := \left\{ \gamma \in W^{1,\oo}( [0,\tau], \RR^d) : \gamma_0 = x, \; \gamma_\tau = y \right\}
\]
and
\begin{equation} \label{E:distancefn}
	L(x,y,\tau) := \inf \left\{ \int_0^\tau H^*\pars{ - \dot \gamma_s, {\gamma_s}}\;ds : \gamma \in \mcl A(x,y,\tau) \right\}. 
\end{equation}
We summarize the main properties of this distance function in the next lemma. We omit the proof, as it follows more or less in the same way as in Lemma A.1 of \cite{Se}. 

For $R > 0$, define
\[
	\Delta_R := \left\{ (x,y) \in \RR^d \times \RR^d : |x-y| \le R \right\}.
\]

\begin{lemma}\label{L:convexdistancefunction} 
Assume that $H$ satisfies \eqref{A:Hregularunifconvex}. Then the following hold:
\begin{enumerate}[(a)]
\item \label{L:Lequations} $L$ is a viscosity solution of
	\begin{equation*}
		 \frac{\del L}{\del \tau} = H(D_x L,x)\quad \text{and} \quad \frac{\del L}{\del \tau} = H(-D_y L,y) \quad \text{in } \RR^d \times \RR^d \times (0,\oo).
	\end{equation*}
\item \label{L:Lcoercive} For all $x,y \in \RR^d$ and $\tau > 0$, 
\[
	\tau\oline{\nu}^*\pars{ \frac{|x-y|}{\tau}} \le L(x,y,\tau) \le \tau \uline{\nu}^*\pars{ \frac{|x-y|}{\tau}}.
\]
Furthermore, there exists $\gamma \in \mcl A(x,y,\tau)$ such that $L(x,y,\tau) = \int_0^\tau H^*(- \dot \gamma_s, \gamma_s)\;ds$, and, for some $c \ge 1$ and almost every $s \in [0,\tau]$, 
\[
	 \frac{|x-y|}{c\tau} \le |\dot \gamma_s| \le  \frac{c|x-y|}{\tau}.
\]

\item \label{L:Lbounds} For all $R > 0$, there exists a constant $C = C_R > 0$ such that
	\[
		|D_x L| + |D_y L| \le C \quad \text{and} \quad D^2 L \le C \Id \quad \text{on } \Delta_R \times \left[ \frac{1}{R}, R\right].
	\]
\end{enumerate}
\end{lemma}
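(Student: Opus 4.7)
The plan is to prove the three parts by first reversing time, setting $\alpha(s) := \gamma(\tau - s)$ so that $\alpha_0 = y$, $\alpha_\tau = x$, and
\[
	L(x,y,\tau) = \inf \left\{ \int_0^\tau H^*(\dot \alpha_s, \alpha_s)\,ds : \alpha \in W^{1,\oo}([0,\tau], \RR^d),\ \alpha_0 = y,\ \alpha_\tau = x \right\}.
\]
This is a classical Bolza problem with Lagrangian $\mcl L(v,z) := H^*(v,z)$ which, under \eqref{A:Hregularunifconvex}, is $C^2$, strictly convex, and superlinear in $v$. Part (a) is then the dynamic programming principle for this value function: splitting off a piece of time $h$ at either endpoint yields, respectively, the equations $\del_\tau L = H(D_x L, x)$ and $\del_\tau L = H(-D_y L, y)$. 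Viscosity supersolution tests use competitors built from the optimal trajectory, while subsolution tests use arbitrary admissible trajectories; the argument is standard (see \cite{CL, Lbook}).

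For the bounds in part (b), the upper bound $L(x, y, \tau) \le \tau \uline{\nu}^*(|x-y|/\tau)$ follows by testing with the straight line $\alpha_s = y + s(x-y)/\tau$, together with the inequality $H^*(q, x) \le \uline{\nu}^*(|q|)$ obtained by taking Legendre transforms of the radial bound $\uline{\nu}(|p|) \le H(p, x)$. The lower bound $L(x,y,\tau) \ge \tau \oline{\nu}^*(|x-y|/\tau)$ follows analogously from $H^*(q,x) \ge \oline{\nu}^*(|q|)$, Jensen's inequality applied to the convex function $\oline{\nu}^*$, and the fact that $|x-y| \le \int_0^\tau |\dot\alpha_s|\,ds$. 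Existence of a minimizer is a direct application of Tonelli's method, using the superlinearity and convexity of $\mcl L$ in $v$. For the pointwise velocity bounds, the Euler-Lagrange equation and the $C^2$ regularity of $\mcl L$ show that $\alpha$ is $C^2$ and satisfies Hamilton's equations $\dot\alpha = D_p H(p, \alpha)$, $\dot p = -D_x H(p, \alpha)$ with $p = D_v H^*(\dot\alpha, \alpha)$. The Hamiltonian $H(p(s), \alpha(s)) \equiv H_0$ is conserved along the trajectory, so the growth assumption gives $\oline{\nu}^{-1}(H_0) \le |p(s)| \le \uline{\nu}^{-1}(H_0)$ uniformly in $s$; the smoothness and strict convexity of $H$ in $p$ then translate this into the required two-sided bound on $|\dot\alpha_s| = |D_p H(p(s), \alpha_s)|$, once $H_0$ is related to $|x-y|/\tau$ using the already established upper and lower bounds on $L$.

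For part (c), the bounds on $|D_x L|$ and $|D_y L|$ follow from (a) and (b): part (b) gives an a priori bound on $|\del_\tau L|$ on $\Delta_R \times [1/R, R]$, and then the identity $\del_\tau L = H(D_x L, x)$ together with the coercivity of $H$ in \eqref{A:appHbasic} forces $|D_x L|$ to be bounded; the symmetric argument handles $D_y L$. The semiconcavity estimate $D^2 L \le C \Id$ is proved by a standard quadratic-perturbation argument: fix $(x_0, y_0, \tau_0)$ with minimizer $\alpha^*$, and for nearby $(x, y, \tau)$ construct a competitor $\alpha$ by an affine-in-time perturbation of $\alpha^*$ that matches the new endpoints and time horizon. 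The cost of $\alpha$ provides an upper bound for $L(x,y,\tau)$; expanding to second order in the perturbation, using the $C^2$ smoothness of $\mcl L$, yields a quadratic upper bound for $L$ around $(x_0, y_0, \tau_0)$, which is equivalent to the claimed semiconcavity.

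The main obstacle is the pointwise lower bound $|\dot\alpha_s| \ge |x-y|/(c\tau)$ in part (b): averaged bounds are immediate from Jensen's inequality, but converting them into a pointwise bound uniform in $s$ relies essentially on the conservation of $H$ along the minimizer and the strict convexity $D^2_p H > 0$, which is where the additional regularity hypothesis \eqref{A:Hregularunifconvex} enters critically.
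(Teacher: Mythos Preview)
The paper omits the proof of this lemma entirely, deferring to Lemma A.1 of \cite{Se}. Your sketch is the standard approach and is essentially what one finds in that reference: dynamic programming for (a), straight-line competitors together with Legendre duality and Tonelli's theorem for (b), and competitor-perturbation arguments for (c).

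One step deserves tightening. In your argument for the gradient bound in (c), you write that ``part (b) gives an a priori bound on $|\partial_\tau L|$.'' The sandwich inequalities on $L$ in (b) do not by themselves bound $\partial_\tau L$; a function trapped between two smooth functions need not have bounded derivative. What actually works is the chain you have already set up in (b): the velocity bound $|\dot\alpha_s|\le c|x-y|/\tau$ together with $p_s = D_v H^*(\dot\alpha_s,\alpha_s)$ and the uniform-in-$x$ control on $H^*$ coming from the radial bounds gives $|p_s|$ bounded on $\Delta_R\times[1/R,R]$; then $D_xL = p(\tau)$ and $-D_yL = p(0)$ at points of differentiability yield the Lipschitz estimate directly. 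Alternatively, and perhaps more robustly, one can prove the semiconcavity bound first (your affine-in-time perturbation argument, whose constant depends on $\sup_s|\dot\alpha_s|$ and hence on the velocity bound from (b)) and then deduce local Lipschitz continuity from semiconcavity together with the two-sided bounds on $L$ in (b). Either route closes the gap; just be explicit about which one you are using, since the phrasing as written looks circular.
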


The upper bound on $D^2 L$ means that $L$ is semiconcave in space. As the next result demonstrates, this allows $L$ to be used as a test function at an important point in the proof of Proposition \ref{P:uvestimate}, despite the fact that $L$ is not in general $C^1$.

\begin{lemma}\label{L:Ldiff}
	Under the same assumptions as Lemma \ref{L:convexdistancefunction}, assume that $\phi \in C^2(\RR^d \times \RR^d)$ and $L(\cdot,\cdot,\tau_0) - \phi$ attains a local minimum at $(x_0,y_0)$. Then $L$ is differentiable at $(x_0,y_0,\tau_0)$ with
	\[
		\left\{
		\begin{split}
		&(D_xL(x_0,y_0,\tau_0) ,D_yL(x_0,y_0,\tau_0)) = (D_x\phi(x_0,y_0) ,D_y \phi(x_0,y_0))\quad \text{and} \\
		&\frac{\del L}{\del \tau}(x_0,y_0,\tau_0) = H(D_x L(x_0,y_0,\tau_0),x_0) = H(-D_y L(x_0,y_0,\tau_0),y_0).
		\end{split}
		\right.
	\]
\end{lemma}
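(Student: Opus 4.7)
The plan splits into spatial differentiability, which follows from semiconcavity, and identification of $L_\tau$, which uses the viscosity equation from Lemma \ref{L:convexdistancefunction}(a).

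For the spatial part, set $(p_0, q_0) := D\phi(x_0, y_0)$. The local-minimum hypothesis combined with a second-order Taylor expansion of the $C^2$ function $\phi$ gives the lower bound
\[
L(x, y, \tau_0) \ge L(x_0, y_0, \tau_0) + p_0 \cdot (x - x_0) + q_0 \cdot (y - y_0) - C\bigl(|x - x_0|^2 + |y - y_0|^2\bigr),
\]
so $(p_0, q_0)$ lies in the Fr\'echet subdifferential of $L(\cdot,\cdot,\tau_0)$ at $(x_0, y_0)$. Lemma \ref{L:convexdistancefunction}(c) furnishes, via the bound $D^2L \le C\Id$, a matching affine-plus-quadratic upper bound for some $(p_*, q_*)$ in the Fr\'echet superdifferential. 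Comparing the linear parts of the two one-sided expansions along any direction $(x-x_0, y-y_0) = t(\xi, \eta)$ with $t \to 0^{\pm}$ forces $(p_*, q_*) = (p_0, q_0)$, and sandwiching then gives that $L(\cdot,\cdot,\tau_0)$ is differentiable at $(x_0, y_0)$ with the claimed gradient.

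For the $\tau$-derivative, freeze $y = y_0$ and let $u(x,\tau) := L(x, y_0, \tau)$. By Lemma \ref{L:convexdistancefunction}(a), $u$ is a viscosity solution of $u_\tau = H(D_x u, x)$, and the bound on $|D_x L|$ from Lemma \ref{L:convexdistancefunction}(c) combined with continuity of $H$ makes $u$ locally Lipschitz in $\tau$. My plan is an Ishii-type doubling in $\tau$: for $\alpha > 0$ large, analyze the maximizer $(x_\alpha, \tau_\alpha)$ of
\[
(x, \tau) \mapsto u(x,\tau) - p_0 \cdot (x - x_0) - 2C|x - x_0|^2 - \alpha(\tau - \tau_0)^2
\]
on a fixed small neighborhood of $(x_0, \tau_0)$. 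The spatial semiconcavity at $\tau_0$ together with Lipschitz control in $\tau$ force $(x_\alpha, \tau_\alpha) \to (x_0, \tau_0)$ with $\alpha|\tau_\alpha - \tau_0|$ uniformly bounded; applying the viscosity subsolution inequality at $(x_\alpha, \tau_\alpha)$ and passing to $\alpha \to \infty$ pins the upper Dini $\tau$-derivative of $u(x_0, \cdot)$ at $\tau_0$ to be at most $H(p_0, x_0)$, and the dual doubling together with the supersolution property gives the matching lower bound. Running the same argument with the second equation $u_\tau = H(-D_y L, y)$ from Lemma \ref{L:convexdistancefunction}(a) yields the second identity.

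The principal obstacle is that the semiconcavity bound in Lemma \ref{L:convexdistancefunction}(c) controls $L$ only in the spatial variables, not jointly with $\tau$; in particular, a direct product test function of the form $\phi(x,y) + g(\tau)$ cannot be plugged into the viscosity definition at $(x_0, y_0, \tau_0)$ itself, since the Lipschitz section $\tau \mapsto L(x_0, y_0, \tau)$ need not admit any $C^1$ upper or lower touch at $\tau_0$ in the Fr\'echet sense. The doubling-in-$\tau$ device circumvents this by producing nearby maximizers where the viscosity inequality is valid, and the technical heart of the argument lies in quantitatively tracking the asymptotics of $(x_\alpha, \tau_\alpha)$ and connecting the penalty quantity $\alpha(\tau_\alpha - \tau_0)$ to the difference quotients that define the Dini $\tau$-derivatives of $u(x_0, \cdot)$ at $\tau_0$.
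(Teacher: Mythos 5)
Your spatial differentiability step matches the paper's: the $C^2$ touch from below gives a Fr\'echet subdifferential, semiconcavity gives a superdifferential, and the two must coincide. For the $\tau$-derivative, however, you depart from the paper and the route you propose has a genuine gap that you yourself flag but do not close.

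The paper does not double in $\tau$. Instead, having established that $L(\cdot,\cdot,\tau_0)$ is differentiable at $(x_0,y_0)$ with gradient $(p_0,q_0)$, it picks $C^2$ functions $\psi^\pm$ sandwiching $L(\cdot,\cdot,\tau_0)$ and touching it at $(x_0,y_0)$ with gradient $(p_0,q_0)$, then uses the method of characteristics (here $H\in C^2_b$ locally with $D^2_pH>0$ is essential) to extend these to $C^2$ solutions $\Psi^\pm$ of $\Psi_\tau = H(D_x\Psi,x)$ on a short two-sided interval around $\tau_0$. Forward and backward comparison for the autonomous equation give $\Psi^-\le L\le\Psi^+$, and a second-order Taylor expansion of $\Psi^\pm$ at $(x_0,y_0,\tau_0)$ then establishes \emph{joint} Fr\'echet differentiability of $L$ there, with $L_\tau = H(p_0,x_0)$ and (repeating with the second PDE) $L_\tau = H(-q_0,y_0)$. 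This two-sided smooth squeeze is the mechanism that turns a mere touch in the spatial slice into full differentiability in all variables.

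Your doubling-in-$\tau$ plan has two problems. First, viscosity inequalities at penalized maximizers $(x_\alpha,\tau_\alpha)$, which generically satisfy $\tau_\alpha\ne\tau_0$, constrain sub- and super-differentials of $u$ \emph{near} $(x_0,\tau_0)$, but this is not the same as controlling the pointwise Dini difference quotients of the one-dimensional section $\tau\mapsto u(x_0,\tau)$ \emph{at} $\tau_0$; a Lipschitz section can have no $C^1$ touch at a given point, and the penalty $\alpha(\tau-\tau_0)^2$ simply pushes the maximizer away from such a point. Passing $\alpha\to\infty$ controls the quantity $2\alpha(\tau_\alpha-\tau_0)$ relative to $H$ at nearby points, but there is no quantitative step supplied that converts this into the claimed bound on $\limsup_{h\to 0}\,[u(x_0,\tau_0+h)-u(x_0,\tau_0)]/h$; you identify this as the technical heart and then leave it unfilled. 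Second, even if one closed that gap, the output would be existence of the partial derivative $\partial_\tau L(x_0,y_0,\tau_0)$ (plus the already-established spatial gradient), which does not by itself give the joint differentiability asserted in the lemma. The paper's barrier construction is what delivers the joint expansion, and some analogue of it (or an explicit argument upgrading partial to joint differentiability) is missing from your proposal.

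A smaller point: replacing $L$ by its slice $u(x,\tau)=L(x,y_0,\tau)$ and asserting it solves $u_\tau=H(D_xu,x)$ in the viscosity sense requires an argument (restriction of the viscosity property to a slice via an auxiliary penalty in $y$ and local compactness). It is true here, but should not be taken for granted.
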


\begin{proof}
	In view of the semiconcavity of $L(\cdot,\cdot, \tau_0)$ on $\RR^d \times \RR^d$, the super-differential of $L(\cdot,\cdot,\tau_0)$ is nonempty at every point. Meanwhile, $(p_0,q_0) := D\phi(x_0,y_0)$ belongs to the sub-differential of $L(\cdot,\cdot,\tau_0)$ at $(x_0,y_0)$. This implies that $L(\cdot,\cdot,\tau_0)$ is differentiable at $(x_0,y_0)$, and the first line above holds.
	
	Choose $\psi^+,\psi^- \in C^2(\RR^d \times \RR^d)$ such that 
	\[
		\left\{
		\begin{split}
		&\psi^- \le L(\cdot,\cdot,\tau_0) \le \psi^+, \\
		&\psi^-(x_0,y_0) = L(x_0,y_0,\tau_0) = \psi^+(x_0,y_0), \quad \text{and} \\
		&D\psi^-(x_0,y_0) = D\psi^+(x_0,y_0) = (p_0,q_0).
		\end{split}
		\right.
	\]
	The method of characteristics can then be used to construct, for sufficiently small $\mu > 0$, solutions $\Psi^{\pm} \in C^2( \RR^d \times \RR^d \times (\tau_0 - \mu, \tau_0 + \mu))$ of the equations 
	\[
		\frac{\del \Psi^{\pm}}{\del \tau}(x,y,\tau) = H(D_x \Psi^{\pm}(x,y,\tau),x) \quad \text{in } \RR^d \times \RR^d \times (\tau_0 - \mu, \tau_0 + \mu).
	\]
	The comparison principle and Lemma \ref{L:convexdistancefunction}\eqref{L:Lequations} then yield
	\begin{equation}\label{psiineq}
		\Psi^-(x,y,\tau) \le L(x,y,\tau) \le \Psi^+(x,y,\tau) \quad \text{for all } (x,y,\tau) \in \RR^d \times \RR^d \times (\tau_0 - \mu, \tau_0 + \mu).
	\end{equation}
	Finally, the regularity of $H$ and the equations for $\Psi^\pm$ allow for the Taylor expansion
	\begin{align*}
		\Psi^{\pm}(x,y,\tau) &= L(x_0,y_0,\tau_0) + p \cdot(x - x_0) + q \cdot(y - y_0)\\
		& + H(p_0,x_0)(\tau - \tau_0) + O(|x-x_0|^2 + |y-y_0|^2 + |\tau - \tau_0|^2).
	\end{align*}
	Together with \eqref{psiineq}, this shows that $L$ is differentiable at $(x_0,y_0,\tau_0)$ and
	\[
		\frac{\del L}{\del \tau}(x_0,y_0,\tau_0) = H(D_x L(x_0,y_0,\tau_0),x_0).
	\]
	A similar argument using the equation $\frac{d\Psi}{d\tau} = H(-D_y \Psi,y)$ gives the final desired equality
	\[
		\frac{\del L}{\del \tau}(x_0,y_0,\tau_0) = H(-D_y L(x_0,y_0,\tau_0),y_0).
	\]
	\end{proof}
	
\begin{proof}[Proof of Proposition \ref{P:uvestimate}]
	We first note that it suffices to assume that $u_0$ and $v_0$ are bounded. Because the resulting estimates do not depend on $\nor{u_0}{\oo}$ or $\nor{v_0}{\oo}$, the general result can be obtained through an approximation procedure and the local uniform stability of the equations with respect to the initial data.

	Classical viscosity solution arguments show that $z(x,y,t) := u(x,t) - v(y,t)$ is a sub-solution of
	\begin{equation}\label{E:mixeddoubled}
		z_t = H(D_x z,x)\dot \zeta - H(-D_y z, y) \dot \eta \quad \text{in } \RR^d \times \RR^d \times (0,\oo). 
	\end{equation}
	For $0 < \lambda < \pars{ \max_{0 \le t \le T} (\zeta_t - \eta_t)_-}^{-1}$, define
	\[
		\Phi_\lambda(x,y,t) := L\pars{ x,y, \frac{1}{\lambda} + \zeta_t - \eta_t}.
	\]
	A simple computation and Lemma \ref{L:convexdistancefunction}\eqref{L:Lequations} reveal that $\Phi$ satisfies \eqref{E:mixeddoubled} at any point $(x,y,t)$ of differentiability. 
	
	Next, for $0 < \beta < 1$ and $\mu > 0$, define
	\[
		\Psi(x,y,t) := u(x,t) - v(y,t) - \Phi_\lambda(x,y,t) - \frac{\beta}{2}(|x|^2 + |y|^2) - \mu t.
	\]
	The comparison principle from the classical viscosity solution theory yields that $|u(x,t)| \le M$ and $|v(x,t)| \le M$ on $\RR^d \times [0,T]$, where
	\[
		M = \max\left\{ \nor{u_0}{\oo} + \max( |\uline{\nu}(0)|, |\oline{\nu}(0)| ) \max_{0 \le t \le T}|\zeta(t)|, \nor{v_0}{\oo} +  \max( |\uline{\nu}(0)|, |\oline{\nu}(0)| ) \max_{0 \le t \le T} |\eta(t)| \right\}.
	\]
	Therefore, $\Psi$ attains a maximum on $\RR^d \times \RR^d \times [0,T]$ at some $(\hat x, \hat y, \hat t)$ that depends on $\beta$, $\lambda$, and $\mu$. Assume for the sake of contradiction that $\hat t > 0$. 
		
	Rearranging terms in the inequality $\Psi(0,0,\hat t) \le \Psi(\hat x, \hat y, \hat t)$ gives
	\begin{equation}\label{inaball}
		\frac{\beta}{2}(|\hat x|^2 + |\hat y|^2) \le u(\hat x, \hat t) - v(\hat y, \hat t) - (u(\hat 0, \hat t) - v(\hat 0,\hat t)) \le 4M.
	\end{equation}
	The inequality $\Psi(\hat y, \hat y, \hat t) \le \Psi(\hat x, \hat y, \hat t)$ and Lemma \ref{L:convexdistancefunction}\eqref{L:Lcoercive} yield
	\begin{equation}\label{inastrip}
		\pars{ \frac{1}{\lambda} + \zeta_{\hat t} - \eta_{\hat t}} \oline{\nu}^*\pars{ \frac{\lambda|x-y|}{1 + \lambda(\zeta_{\hat t} - \eta_{\hat t}) } }\le u(\hat x, \hat t) - u(\hat y, \hat t) + \frac{\beta}{2} (|\hat y|^2 - |\hat x|^2) \le 6M.
	\end{equation}
	Then \eqref{inaball} and \eqref{inastrip} together imply that, for some $R >0$ depending on $\lambda$, $M$, $\nor{\zeta}{\oo,T}$, and $\nor{\eta}{\oo,T}$, but independent of $\beta$, $(\hat x, \hat y) \in \Omega_{R,\beta}$, where
	\begin{align*}
		\Omega_{R,\beta} &:= \Delta_R \cap B_{R\beta^{-1/2}} 
		= \left\{ (x,y) \in \RR^d \times \RR^d : \pars{ |x|^2 + |y|^2 }^{1/2} \le R \beta^{-1/2} \quad \text{and} \quad |x - y| \le R \right\}.
	\end{align*}
	In the arguments that follow, the constant $C > 0$ depends only on $R$, and may change from line to line.
		
	For $0 < \delta < 1$, set
	\begin{align*}
		\Psi_\delta(x,y,z,w,t) &:= u(x,t) - v(y,t) -\frac{1}{2\delta}(|x-z|^2 + |y-w|^2) - \Phi_\lambda(z,w,t) \\
		&- \frac{\beta}{2}(|z|^2 + |w|^2) - \mu t - \frac{1}{2}\pars{ |x - \hat x|^2 + |y - \hat y|^2 + |t - \hat t|^2}
	\end{align*}
	and assume that the maximum of $\Psi_\delta$ on $\Omega_{R,\beta} \times \Omega_{R,\beta} \times [0,T]$ is attained at $(x_\delta,y_\delta,z_\delta,w_\delta,t_\delta)$. Similar arguments as in the proof of Proposition A.2 from \cite{Se} then yield
	\[
		|x_\delta - z_\delta| + |y_\delta - w_\delta | + |x_\delta - \hat x|^2 + |y_\delta - \hat y|^2 + |t_\delta - \hat t|^2 \le C \delta.
	\]
	Therefore, for sufficiently small $\delta$, $(x_\delta,y_\delta,z_\delta,w_\delta, t_\delta)$ is a local interior maximum point of $\Psi_\delta$ in $\Omega_{R,\beta} \times \Omega_{R,\beta} \times (0,T)$.
				
	Since
	\begin{align*}
		(x,y,t) &\mapsto u(x,t) - v(y,t) - \frac{1}{2\delta} \pars{ |x- z_\delta|^2 + |y-w_\delta|^2}\\
		& - \Phi_\lambda(z_\delta,w_\delta,t) - \mu t - \frac{1}{2} \pars{ |x-\hat x|^2 - |y - \hat y|^2 - |t - \hat t|^2}
	\end{align*}
	attains an interior maximum at $(x_\delta,y_\delta,t_\delta)$, the definition of viscosity solutions yields
	\begin{align*}
		\mu + t_\delta - \hat t + \Phi_{\lambda,t}(z_\delta,w_\delta,t_\delta)
		\le  H\pars{ \frac{x_\delta - z_\delta}{\delta} + x_\delta - \hat x, x_\delta} \dot \zeta_{t_\delta}
		- H\pars{ - \frac{y_\delta - w_\delta}{\delta} - (y_\delta - \hat y), y_\delta} \dot \eta_{t_\delta}.
	\end{align*}
	
	Next, $(z_\delta,w_\delta)$ is a minimum point of
	\begin{align*}
		(z,w) \mapsto \Phi_\lambda(z,w,t_\delta) + \frac{1}{2\delta} (|x_\delta - z|^2 + |y_\delta - w|^2) + \frac{\beta}{2} (|z|^2 + |w|^2).
	\end{align*}
	In view of Lemma \ref{L:Ldiff}, $\Phi_\lambda$ is differentiable at $(z_\delta,w_\delta,t_\delta)$, and so
	\[
		\left\{
		\begin{split}
		&D_x \Phi_\lambda(z_\delta,w_\delta,t_\delta) = \frac{x_\delta - z_\delta}{\delta} - \beta z_\delta,\\
		&D_y \Phi_\lambda(z_\delta,w_\delta,t_\delta) = \frac{y_\delta - w_\delta}{\delta} - \beta w_\delta, \quad \text{and} \\
		&\Phi_{\lambda,t}(z_\delta,w_\delta,t_\delta)
		= H(D_x \Phi_\lambda(z_\delta,w_\delta,t_\delta), z_\delta) \dot \zeta_{t_\delta} - H(-D_y \Phi_\lambda(z_\delta,w_\delta,t_\delta), w_\delta) \dot \eta_{t_\delta}.
		\end{split}
		\right.
	\]
	
	It follows that
	\begin{align*}
		\mu + t_\delta - \hat t + \Phi_{\lambda,t}(z_\delta,w_\delta,t_\delta)
		&\le H\pars{ D_x \Phi_\lambda(z_\delta,w_\delta,t_\delta) + \beta z_\delta + x_\delta - \hat x, x_\delta} \dot \xi_{t_\delta}\\
		&- H\pars{ - D_y \Phi_\lambda(z_\delta, w_\delta,t_\delta) - \beta w_\delta - (y_\delta - \hat y), y_\delta} \dot \zeta_{t_\delta}.
	\end{align*}
	The bounds for $(\hat x, \hat y, \hat t)$ and $(x_\delta,y_\delta,z_\delta,w_\delta,t_\delta)$ and the local Lipschitz regularity of $H$ yield
	\[
		\mu \le C (\beta^{1/2} + \delta^{1/2} + \delta)\pars{\nor{\dot \xi}{\oo,T} + \nor{\dot \zeta}{\oo,T}}.
	\]
	We obtain a contradiction for sufficiently small enough $\delta$ and $\beta$.
	
	Therefore, for all $\mu >0$ and $t \in [0,T]$,
	\begin{align*}
		\lim_{\beta \to 0} &\sup_{(x,y) \in \RR^d \times \RR^d} \pars{ u(x,t) - v(y,t) - \Phi_\lambda(x,y,t) - \frac{\beta}{2} (|x|^2 + |y|^2)}\\
		= &\sup_{(x,y) \in \RR^d \times \RR^d} \pars{ u(x,t) - v(y,t) - \Phi_\lambda(x,y,t)}\\
		\le &\sup_{(x,y) \in \RR^d \times \RR^d} \pars{ u_0(x) - v_0(y) - L(x,y,1/\lambda)} + \mu t.
	\end{align*}
	The desired inequality is established upon letting $\mu \to 0$ and using the bounds in Lemma \ref{L:convexdistancefunction}\eqref{L:Lcoercive}.
\end{proof}
	
\section{Calculation of a nonconvex effective Hamiltonian} \label{S:effective}

Let $F: \RR \to \RR$ be a smooth, even function such that
\begin{equation}\label{A:appnonconvexF}
	\left\{
	\begin{split}
		&\text{for some $0 < \theta_3 < \theta_2 < \theta_1$,}\\
		&F(0) = 0, \; F(\theta_2) = \frac{1}{2}, \; F(\theta_1) = F(\theta_3) = \frac{1}{3}, \; \lim_{r \to \oo} F(r) = +\oo, \\
		&\text{$F$ is strictly increasing on $[0,\theta_2] \cup [\theta_1, +\oo)$ and strictly decreasing on $[\theta_2, \theta_1]$,}
	\end{split}
	\right.
\end{equation}
and, for $0 < s < 1$, define the $1$-periodic function $V_s: \TT \to \TT$ by
\[
	V_s(x) := 
	\begin{dcases}
		\frac{x}{s} & \text{if } 0 \le x \le s \text{ and} \\
		\frac{1-x}{1-s} & \text{if } s < x \le 1.
	\end{dcases}
\]
The goal of this section is to obtain a formula for the effective Hamiltonian associated to
\[
	H_s(p,x) = F(p) - V_s(x).
\]
Some elements of the proof below are used in \cite{LTY}, where it is shown that $\oline{H}_s = \oline{H}_{s'}$ if and only if $s = s'$. For our purposes, in view of Theorem \ref{T:LScriteria}, it is necessary to establish that $\oline{H}_s$ satisfies \eqref{A:Hdiffconv}, which does not follow immediately from standard results from periodic homogenization. The formula in Proposition \ref{P:Hsshape} implies, in particular, that $\oline{H}_s$ is Lipschitz and piecewise smooth, and, hence, \eqref{A:Hdiffconv} is satisfied.

As in \cite{LTY}, define the functions
\[
	\left\{
	\begin{split}
		&\psi_1 := \pars{ F|_{[\theta_1,\oo)} }^{-1} : \left[ \frac{1}{3}, +\oo \right) \to [\theta_1, +\oo), \\
		&\psi_2 := \pars{ F|_{[\theta_2, \theta_1]}}^{-1} : \left[ \frac{1}{3}, \frac{1}{2}\right] \to [\theta_2, \theta_1], \text{ and} \\
		&\psi_3:= \pars{ F|_{[0, \theta_2]}}^{-1} : \left[ 0, \frac{1}{2} \right] \to [0, \theta_2]. 
	\end{split}
	\right.
\]
We identify the following points $0 < p_{+,s} < q_{-,s} < q_+$, between which $\oline{H}_s$ changes its shape:
\begin{equation}\label{flatspotendpoints}
	\left\{
	\begin{split}
		&p_{+,s} := \int_0^{1/3} \psi_3(y)\;dy + \int_{1/2}^1 \psi_1(y)\;dy + \int_{1/3}^{1/2} \left[ s \psi_1(y) + (1-s)\psi_3(y)  \right]dy, \\
		&q_{-,s} := \int_{1/2}^{4/3} \psi_1(y)\;dy + \int_{1/3}^{1/2} \left[ s \psi_1(y) + (1-s)\psi_3(y)  \right]dy, \text{ and}\\
		&q_+ := \int_{1/3}^{4/3} \psi_1(y)\;dy.
	\end{split}
	\right.
\end{equation}

\begin{proposition}\label{P:Hsshape}
	The function $\oline{H}_s$ can be characterized as follows:
	\begin{enumerate}[(a)]
	\item If $0 \le p \le p_{+,s}$, then $\oline{H}_s(p) = 0$.
	
	\item If $p_{+,s} \le p \le q_{-,s}$ then $\oline{H}_s(p)$ is the unique constant $\lambda \in [0,1/3]$ for which
	\[
		p = \int_\lambda^{1/3} \psi_3(y)\;dy + \int_{1/2}^{1+\lambda} \psi_1(y)\;dy + \int_{1/3}^{1/2} \left[ s \psi_1(y) + (1-s) \psi_3(y) \right]dy.
	\]
	
	\item If $q_{-,s} \le p \le q_+$, then $\oline{H}_s(p) = \frac{1}{3}$.
	
	\item If $p \ge q_+$, then $\oline{H}_s(p)$ is the unique constant $\lambda \ge \frac{1}{3}$ for which
		\[
			p = \int_{\lambda}^{1+\lambda} \psi_1(y)\;dy.
		\]
		
	\item If $p < 0$, then $\oline{H}_s(p) = \oline{H}_{1-s}(-p)$.
	\end{enumerate}
\end{proposition}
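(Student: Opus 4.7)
The plan is to invoke the characterization from Theorem~\ref{T:periodicfacts}: $\oline{H}_s(p)$ is the unique constant $\lambda$ for which the cell problem
\[
F(p + v'(x)) - V_s(x) = \lambda \quad \text{in } \TT
\]
admits a periodic viscosity solution $v$. For each case I will exhibit such a solution at the claimed $\lambda$ and recover the claimed $p$ from the periodicity constraint $\int_0^1 w\,dx = p$ on $w:=p+v'$; uniqueness of $\lambda$ will then identify $\oline{H}_s(p)$.

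Because $F$ is nonconvex, $v$ is piecewise $C^1$ and $w$ takes values on the monotone branches $\pm\psi_1, \pm\psi_2, \pm\psi_3$ of $F^{-1}(\lambda + V_s(x))$ where defined. At a jump, an up-jump of $w$ (a convex corner of $v$) is admissible only if $F\ge \lambda+V_s(x_0)$ on the interval between $w(x_0^-)$ and $w(x_0^+)$, and a down-jump only if $F\le \lambda+V_s(x_0)$ there. I will first tabulate, for each pair of branches and each value of $c:=\lambda+V_s(x_0)$, whether the corresponding jump is admissible; the essential entries are that $\psi_3\to\psi_1$ up-jumps are allowed only at $c=1/3$ and $\psi_1\to\psi_3$ down-jumps only at $c=1/2$ (these being the critical values of $F$), while transitions through the middle branch $\psi_2$ and the sign-flips $\pm\psi_j\to\mp\psi_j$ are admissible on explicit sets determined by the shape of $F$.

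For case (d) with $\lambda\ge 1/3$, $w=\psi_1(\lambda+V_s(\cdot))$ is continuous (the kink of $V_s$ at $x=s$ produces only a $C^1$ kink in $v$) and solves the equation pointwise; computing $\int w$ by changing variables $y=\lambda+V_s(x)$ on $[0,s]$ and $[s,1]$ makes the $s$-dependence cancel, yielding $p=\int_\lambda^{1+\lambda}\psi_1$. For case (b) with $\lambda\in(0,1/3)$, the domains of definition force $w=\psi_3(\lambda+V_s)$ near $x=0,1$ and $w=\psi_1(\lambda+V_s)$ near $x=s$; the unique way to connect these is an up-jump at $V_s=1/3-\lambda$ on the rising part of $V_s$ and a down-jump at $V_s=1/2-\lambda$ on the descending part, and splitting $\int w$ at $x=s$ produces the formula stated in (b). Cases (a) and (c) correspond to flat spots of $\oline{H}_s$ at the critical levels $\lambda=0$ and $\lambda=1/3$. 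At $\lambda=1/3$ the region $\{V_s\ge 1/6\}$ is forced onto $\psi_1$, but on $[(5+s)/6,1]$ one can interpolate between pure $\psi_1$ (giving $p=q_+$) and pure $\psi_3$ (giving $p=q_{-,s}$) by inserting a $\psi_2$-bridge of variable length using the admissible $\psi_1\to\psi_2$ down-jumps and the continuous meeting of $\psi_2$ and $\psi_3$ at $c=1/2$, producing a one-parameter family of viscosity solutions sweeping $[q_{-,s},q_+]$; case (a) is analogous at $\lambda=0$, where inserting a $\mp\psi_1$ segment in the region $\{V_s>1/2\}$ (permitted by the sign-flip admissibility at $c=1/2$ and the return-flip at $c=0$) continuously connects the standard construction ($p=p_{+,s}$) to $p=0$.

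Case (e) will follow from the substitution $x\mapsto 1-x$ together with the evenness of $F$ and the identity $V_s(1-x)=V_{1-s}(x)$, which transforms the cell problem for $H_s$ at $-p$ into the cell problem for $H_{1-s}$ at $p$ and gives $\oline{H}_s(-p)=\oline{H}_{1-s}(p)$. The main obstacle will be the case-by-case verification of the jump admissibility conditions in (a) and (c) and showing that the interpolating families there sweep closed intervals of $p$ without gaps; non-convexity of $F$ precludes convex-combining two viscosity solutions into a third, so each intermediate $p$ in the flat spot must be realized by an explicit admissible construction, which is what the bookkeeping of admissible transitions is designed to supply.
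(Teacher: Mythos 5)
Your proposal takes essentially the same approach as the paper: you characterize $\oline{H}_s(p)$ via the periodic cell problem, construct explicit piecewise correctors from the branches $\pm\psi_1,\pm\psi_2,\pm\psi_3$ of $F^{-1}(\lambda+V_s(\cdot))$, verify the viscosity jump-admissibility conditions (with the key facts that $\psi_3\to\psi_1$ up-jumps require $c=1/3$ and $\psi_1\to\psi_3$ down-jumps require $c=1/2$), and dispatch $p<0$ via the $x\mapsto 1-x$ symmetry, exactly as in the paper's Lemma \ref{L:visccrit} and its nine explicit cases. The paper's bookkeeping for the flat spot in (a) is somewhat richer than your sketch suggests — it splits into five subcases that insert not only $-\psi_1$ but also $-\psi_2$ and $-\psi_3$ segments at varying levels of $V_s$ — but the admissibility table you describe is precisely what produces those subcases, so your plan is sound.
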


Obtaining the formula for $\oline{H}_s(p)$ involves constructing viscosity solutions of the equation
\begin{equation}\label{E:Hscellproblem}
	F(w'(y)) - V_s(y) = \lambda \quad \text{in } \RR
\end{equation}
such that $w(x) - px$ is periodic, which is possible only for the unique constant $\lambda = \oline{H}_s(p)$. We make use of the following lemma, whose proof is a consequence of the definition of viscosity solutions:
\begin{lemma}\label{L:visccrit}
	Assume that $F(f(y)) + V_s(y) = \lambda$ at all points $y \in \RR$ at which $f$ is continuous, and, whenever
	\[
		y_0 \in \RR, \quad p_1 := f(y_0^-), \quad \text{and} \quad p_2 := f(y_0^+),
	\]
	then $F(p_1) = F(p_2) = \lambda + V_s(y_0)$ and
	\begin{align*}
		&p_1 < p_2 \; \Rightarrow F(p) \ge \lambda + V_s(y_0) \text{ for } p \in [p_1,p_2], \\
		&p_1 > p_2 \; \Rightarrow F(p) \le \lambda + V_s(y_0) \text{ for } p \in [p_2, p_1].
	\end{align*}
	Then $\{ y \mapsto w(y) := \int_0^y f(x)\;dx\}$ is a viscosity solution of \eqref{E:Hscellproblem}, and
	\[
		\oline{H} \pars{ \int_0^1 f(x)\;dx} = \lambda.
	\]
\end{lemma}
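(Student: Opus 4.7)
The plan is to verify directly that $w(y) := \int_0^y f(x)\,dx$ satisfies both halves of the viscosity-solution definition for \eqref{E:Hscellproblem}, separating the (countably many) jump points of $f$ from its continuity points. Because $V_s$ is continuous on $\RR$ and $f$ admits one-sided limits everywhere, $w$ is locally Lipschitz, and at each $y_0$ the super-differential of $w$ consists of the $p$ for which $w(y_0 + h) \le w(y_0) + ph + o(|h|)$; a short expansion using $p_1 = f(y_0^-)$ and $p_2 = f(y_0^+)$ shows that this super-differential equals $[p_2, p_1]$ when $p_1 \ge p_2$ and is empty when $p_1 < p_2$, with the symmetric statement for the sub-differential.

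At any point where $f$ is continuous, $w$ is classically differentiable with $w'(y_0) = f(y_0)$, and the standing pointwise identity gives the cell equation at $y_0$ both classically and as a viscosity relation. At a jump point $y_0$ with $p_1 > p_2$, only the sub-solution condition is non-vacuous: any smooth $\phi \ge w$ with $\phi(y_0) = w(y_0)$ has $\phi'(y_0) \in [p_2, p_1]$, and the second hypothesis $F(p) \le \lambda + V_s(y_0)$ on $[p_2, p_1]$ is precisely the inequality required of a sub-solution at $y_0$. The case $p_1 < p_2$ is the mirror image, with the super-solution side made non-vacuous by the non-emptiness of the sub-differential, and the first hypothesis $F(p) \ge \lambda + V_s(y_0)$ on $[p_1, p_2]$ supplying the needed inequality.

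For the ``moreover'' clause, the constructions of $f$ carried out in the proof of Proposition \ref{P:Hsshape} are $1$-periodic, so with $\bar p := \int_0^1 f(x)\,dx$ the function $v(y) := w(y) - \bar p\, y$ is $1$-periodic, and the sub/super-solution verifications above, reapplied to the test functions $\phi - \bar p \cdot$, show that $v$ is a periodic viscosity solution of $F(\bar p + v'(y)) - V_s(y) = \lambda$. The uniqueness of the ergodic constant in Theorem \ref{T:periodicfacts} then forces $\oline{H}_s(\bar p) = \lambda$. The only nontrivial technical point is the elementary computation of the one-sided differentials of $w$ at a jump of $f$; beyond that the proof is a direct unwinding of the viscosity definition, and I do not anticipate any substantive obstacle.
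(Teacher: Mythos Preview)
Your proposal is correct and is precisely the argument the paper has in mind: the paper omits the proof entirely, stating only that it ``is a consequence of the definition of viscosity solutions,'' and you have filled in exactly those details by computing the one-sided differentials of $w$ at the jump points of $f$ and matching the hypotheses to the sub- and super-solution inequalities. The only minor remark is that the periodicity of $f$ (which you correctly invoke for the final clause) is an implicit hypothesis of the lemma rather than something derived, but you handle this appropriately by pointing to the explicit constructions in the proof of Proposition~\ref{P:Hsshape}.
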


For the rest of the section, we construct correctors using Lemma \ref{L:visccrit} as a blueprint, that is, for each $p \in \RR$, we construct $f$ as in the hypotheses of Lemma \ref{L:visccrit} for the correct constant $\oline{H}_s(p)$.

Define the points $p_{0,s} < p_4 < p_3 < p_2 < p_1 < p_{+,s}$ by
\begin{equation}\label{flatspotgradients}
	\left\{
	\begin{split}
		&p_{0,s} := (2s-1) \int_0^{1/3} \psi_3(y)\;dy + (2s-1) \int_{1/3}^1 \psi_1(y)\;dy,\\
		&p_1 := (2s-1)\int_0^{1/3} \psi_3(y)\;dy + \int_{1/2}^1 \psi_1(y)\;dy + \int_{1/3}^{1/2} \left[ s \psi_1(y) + (1-s)\psi_3(y)  \right]dy, \\
		&p_2 := (2s-1)\int_0^{1/3} \psi_3(y)\;dy + \int_{1/2}^1 \psi_1(y)\;dy + \int_{1/3}^{1/2} \left[ s \psi_1(y) - (1-s)\psi_3(y)  \right]dy, \\
		&p_3 := (2s-1)\int_0^{1/3} \psi_3(y)\;dy + \int_{1/2}^1 \psi_1(y)\;dy + \int_{1/3}^{1/2} \left[ s \psi_1(y) - (1-s)\psi_2(y)  \right]dy, \quad \text{and} \\
		&p_4 := (2s-1) \int_0^{1/3} \psi_3(y)\;dy + \int_{1/2}^1\psi_1(y)\;dy + (2s-1) \int_{1/3}^{1/2} \psi_1(y)\;dy.
	\end{split}
	\right.
\end{equation}
The formula for $\oline{H}_s(p)$ will be established for all $p \ge p_{0,s}$, and the formula for the remaining gradients follows because $p_{0,1-s} = -p_{0,s}$ and $\oline{H}_{1-s}(p) := \oline{H}_s(-p)$.

{\bf Case 1: $p_1 \le p \le p_{+,s}$ and $\lambda = 0$}
\[
	f(x) :=
		\begin{dcases}
			\phi_3(V_s(x)) & \text{if } x \in \pars{0, \frac{s}{3} } \cup \pars{ \frac{1+s}{2}, 1 - \tau(1-s)},\\
			\phi_1(V_s(x)) & \text{if } x \in \pars{\frac{s}{3} , \frac{1+s}{2}},\\
			-\phi_3(V_s(x)) & \text{if } x \in \pars{ 1 - \tau(1-s), 1},
		\end{dcases}
\]
where $\tau \in [0,1/3]$ is given uniquely by
\begin{align*}
	p = (2s-1) \int_0^\tau \psi_3(y)\;dy + \int_\tau^{1/3} \psi_3(y)\;dy + \int_{1/3}^{1/2} \left[ s \psi_1(y) + (1-s) \psi_3(y)  \right]dy + \int_{1/2}^1\psi_1(y)\;dy.
\end{align*}

{\bf Case 2: $p_2 \le p \le p_1$ and $\lambda = 0$}
\[
	f(x) :=
		\begin{dcases}
			\phi_3(V_s(x)) & \text{if } x \in \pars{0, \frac{s}{3} } \cup \pars{ \frac{1+s}{2}, 1 - \tau(1-s)},\\
			\phi_1(V_s(x)) & \text{if } x \in \pars{\frac{s}{3} , \frac{1+s}{2}},\\
			-\phi_3(V_s(x)) & \text{if } x \in \pars{ 1 - \tau(1-s), 1},
		\end{dcases}
\]
where $\tau \in [1/3,1/2]$ is given uniquely by
\begin{align*}
		p &= (2s-1) \int_0^{1/3} \psi_3(y)\;dy + \int_{1/3}^\tau \left[ s \psi_1(y) - (1-s) \psi_3(y) \right]dy \\
		&+ \int_\tau^{1/2} \left[s \psi_1(y) - (1-s) \psi_3(y) \right]dy + \int_{1/2}^1 \psi_1(y)\;dy.
\end{align*}
	
{\bf Case 3: $p_3 \le p \le p_2$ and $\lambda = 0$}
\[
		f(x) :=
		\begin{dcases}
			\psi_3(V_s(x)) & \text{if } x \in \pars{ 0, \frac{s}{3}}, \\
			\psi_1(V_s(x)) & \text{if } x \in \pars{ \frac{s}{3}, \frac{1+s}{2}}, \\
			-\psi_2(V_s(x)) & \text{if } x \in \pars{ \frac{1+s}{2}, 1 - \tau(1-s) }, \\
			-\psi_3(V_s(x)) & \text{if } x \in \pars{ 1 - \tau(1-s), 1},
		\end{dcases}
	\]
where $\tau \in [1/3,1/2]$ is given uniquely by
	\begin{align*}
		p &:= (2s-1) \int_0^{1/3} \psi_3(y)\;dy + \int_{1/3}^\tau \left[ s \psi_1(y) - (1-s) \psi_3(y) \right]dy\\
		& + \int_\tau^{1/2} \left[ s \psi_1(y) - (1-s) \psi_2(y) \right]dy + \int_{1/2}^1 \psi_1(y)\;dy.
	\end{align*}	
	
{\bf Case 4: $p_4 \le p \le p_3$ and $\lambda = 0$}
\[
		f(x) :=
		\begin{dcases}
			\psi_3(V_s(x)) & \text{if } x \in \pars{ 0, \frac{s}{3}}, \\
			\psi_1(V_s(x)) & \text{if } x \in \pars{ \frac{s}{3}, \frac{1+s}{2}}, \\
			-\psi_2(V_s(x)) & \text{if } x \in \pars{ \frac{1+s}{2}, 1 - \tau(1-s) }, \\
			-\psi_1(V_s(x)) & \text{if } x \in \pars{ 1 - \tau(1-s), \frac{2+s}{3} }, \\
			-\psi_3(V_s(x)) & \text{if } x \in \pars{ \frac{2+s}{3}, 1},
		\end{dcases}
	\]
	where $\tau \in [1/3,1/2]$ is given uniquely by
	\[
		p := (2s-1) \int_0^{1/3} \psi_3(y)\;dy + (2s-1) \int_{1/3}^\tau \psi_1(y)\;dy + \int_\tau^{1/2} \left[ s \psi_1(y) - (1-s) \psi_2(y) \right]dy + \int_{1/2}^1 \psi_1(y)\;dy.
	\]

{\bf Case 5: $p_{0,s} \le p \le p_4$ and $\lambda = 0$}
	\[
		f(x) :=
		\begin{dcases}
			\psi_3(V_s(x)) & \text{if } x \in \pars{ 0, \frac{s}{3}}, \\
			\psi_1(V_s(x)) & \text{if } x \in \pars{ \frac{s}{3}, 1- \tau(1-s)}, \\
			-\psi_1(V_s(x)) & \text{if } x \in \pars{ 1 - \tau(1-s), \frac{2+s}{3} }, \\
			-\psi_3(V_s(x)) & \text{if } x \in \pars{ \frac{2+s}{3}, 1},
		\end{dcases}
	\]
	where $\tau \in [1/2,1]$ is given uniquely by
	\[
		p := (2s-1) \int_0^{1/3} \psi_3(y)\;dy + (2s-1) \int_{1/3}^\tau \psi_1(y)\;dy + \int_\tau^1 \psi_1(y)\;dy.
	\]

{\bf Case 6: $p_{+,s} \le p \le q_{-,s}$ and $\lambda \in [0,1/3]$ satisfies
	\[
		p = \int_\lambda^{1/3} \psi_3(y)\;dy + \int_{1/2}^{1+\lambda} \psi_1(y)\;dy + \int_{1/3}^{1/2} \left[ s \psi_1(y) + (1-s) \psi_3(y) \right]dy.
	\]
	}
	\[
		f(y) :=
		\begin{dcases}
			\psi_3(\lambda + V_s(x)) & \text{if } s \in \pars{ 0, (1-3\lambda) \frac{s}{3}} \cup \pars{ \frac{1+s}{2} + \lambda (1-s), 1} \\
			\psi_1(\lambda + V_s(x)) & \text{if } x \in \pars{ (1-3\lambda) \frac{s}{3} , \frac{1+s}{2} + \lambda(1-s)}.	
		\end{dcases}
	\]
	
Before moving on to the next case, we define
\[
	q_1 := \int_{1/2}^{4/3} \psi_1(y)\;dy + \int_{1/3}^{1/2} \left[ s \psi_1(y) + (1-s)\psi_2(y)  \right]dy.
\]
	
{\bf Case 7: $q_{-,s} \le p \le q_1$ and $\lambda = 1/3$}

	There exists a unique $\tau \in [1/3,1/2]$ such that
	\[
		p = \int_{1/3}^\tau \left[ s \psi_1(y) + (1-s) \psi_2(y) \right]dy + \int_\tau^{1/2} \left[ s \psi_1(y) + (1-s) \psi_3(y) \right]dy + \int_{1/2}^{4/3} \psi_1(y)\;dy.
	\]
	Let $\mu \in [(5+s)/6, 1]$ be defined by
	\[
		\tau = \frac{1}{3} + \frac{1-\mu}{1-s} \in \left[ \frac{1}{3}, \frac{1}{2} \right],
	\]
	and define
	\[
		f(x) := 
		\begin{dcases}
			\psi_1(1/3 + V_s(x)) & \text{if } x \in \pars{ 0, \frac{5+s}{6}}, \\
			\psi_3(1/3 + V_s(x)) & \text{if } x \in \pars{ \frac{5+s}{6}, \mu }\\
			\psi_2(1/3 + V_s(x)) & \text{if } x \in \pars{ \mu, 1}.
		\end{dcases}
	\]
	
{\bf Case 8: $q_1 \le p \le q_+$ and $\lambda = 1/3$}

	There exists a unique $\tau \in [1/3,1/2]$ such that
	\[
		p = \int_{1/3}^\tau \left[ s \psi_1(y) + (1-s) \psi_2(y) \right]dy + \int_\tau^{4/3} \psi_1(y)\;dy.
	\]
	Let $\mu \in [ (5+s)/6, 1]$ be defined by 
	\[
		\tau = \frac{1}{3} + \frac{1-\mu}{1-s} \in [1/3,1/2],
	\]
	and define
	\[
		f(x) :=
		\begin{dcases}
			\psi_1(1/3 + V_s(x)) & \text{if } x \in (0,\mu), \\
			\psi_2(1/3 + V_s(x)) & \text{if } x \in (\mu, 1).
		\end{dcases}
	\]

{\bf Case 9: If $p \ge q_{+,s}$ and $\lambda \in [1/3,\oo)$ satisfies
	\[
		p = \int_\lambda^{1+\lambda} \psi_1(y)\;dy,
	\]
	}
	then define
	\[
		f(x) := \psi_1(\lambda + V_s(x)).
	\]
	
\bibliography{HJhomogscaling}{}

\begin{thebibliography}{10}

\bibitem{AS}
{\sc Armstrong, S.~N., and Souganidis, P.~E.}
\newblock Stochastic homogenization of level-set convex {H}amilton-{J}acobi
  equations.
\newblock {\em Int. Math. Res. Not. IMRN}, 15 (2013), 3420--3449.

\bibitem{BSS}
{\sc Barles, G., Soner, H.~M., and Souganidis, P.~E.}
\newblock Front propagation and phase field theory.
\newblock {\em SIAM J. Control Optim. 31}, 2 (1993), 439--469.

\bibitem{Bill}
{\sc Billingsley, P.}
\newblock {\em Convergence of probability measures}.
\newblock John Wiley \& Sons, Inc., New York-London-Sydney, 1968.

\bibitem{BP}
{\sc Bouc, R., and Pardoux, E.}
\newblock Asymptotic analysis of {PDE}s with wide-band noise disturbances, and
  expansion of the moments.
\newblock {\em Stochastic Anal. Appl. 2}, 4 (1984), 369--422.

\bibitem{CKP}
{\sc Campillo, F., Kleptsyna, M., and Piatnitski, A.}
\newblock Homogenization of random parabolic operator with large potential.
\newblock {\em Stochastic Process. Appl. 93}, 1 (2001), 57--85.

\bibitem{CDI}
{\sc Capuzzo-Dolcetta, I., and Ishii, H.}
\newblock On the rate of convergence in homogenization of {H}amilton-{J}acobi
  equations.
\newblock {\em Indiana Univ. Math. J. 50}, 3 (2001), 1113--1129.

\bibitem{CH}
{\sc Cogburn, R., Hersh, R., and Kac, M.}
\newblock Two limit theorems for random differential equations.
\newblock {\em Indiana University Mathematics Journal 22}, 11 (1973),
  1067--1089.

\bibitem{CL}
{\sc Crandall, M.~G., and Lions, P.-L.}
\newblock Viscosity solutions of {H}amilton-{J}acobi equations.
\newblock {\em Trans. Amer. Math. Soc. 277}, 1 (1983), 1--42.

\bibitem{E}
{\sc Evans, L.~C.}
\newblock Periodic homogenisation of certain fully nonlinear partial
  differential equations.
\newblock {\em Proceedings of the Royal Society of Edinburgh: Section A
  Mathematics 120}, 3-4 (1992), 245?265.

\bibitem{FGLS}
{\sc Friz, P.~K., Gassiat, P., Lions, P.-L., and Souganidis, P.~E.}
\newblock Eikonal equations and pathwise solutions to fully non-linear {SPDE}s.
\newblock {\em Stoch. Partial Differ. Equ. Anal. Comput. 5}, 2 (2017),
  256--277.

\bibitem{GGLS}
{\sc Gassiat, P., Gess, B., Lions, P.-L., and Souganidis, P.~E.}
\newblock Speed of propagation for {H}amilton-{J}acobi equations with
  multiplicative rough time dependence and convex {H}amiltonians.
\newblock {\em Probab. Theory Related Fields 176}, 1-2 (2020), 421--448.

\bibitem{Khas}
{\sc Hasminski\u{\i}, R.~Z.}
\newblock A limit theorem for solutions of differential equations with a random
  right hand part.
\newblock {\em Teor. Verojatnost. i Primenen 11\/} (1966), 444--462.

\bibitem{I}
{\sc Ishii, H.}
\newblock Hamilton-{J}acobi equations with discontinuous {H}amiltonians on
  arbitrary open sets.
\newblock {\em Bull. Fac. Sci. Engrg. Chuo Univ. 28\/} (1985), 33--77.

\bibitem{KH}
{\sc Kushner, H.~J., and Huang, H.}
\newblock Limits for parabolic partial differential equations with wide band
  stochastic coefficients and an application to filtering theory.
\newblock {\em Stochastics 14}, 2 (1985), 115--148.

\bibitem{Lbook}
{\sc Lions, P.-L.}
\newblock {\em Generalized solutions of {H}amilton-{J}acobi equations}, vol.~69
  of {\em Research Notes in Mathematics}.
\newblock Pitman (Advanced Publishing Program), Boston, Mass.-London, 1982.

\bibitem{LPV}
{\sc Lions, P.-L., Papanicolaou, G.~C., and Varadhan, S. R.~S.}
\newblock Homogenization of {H}amilton-{J}acobi equations.
\newblock Unpublished manuscript.

\bibitem{LP}
{\sc Lions, P.-L., and Perthame, B.}
\newblock Remarks on {H}amilton-{J}acobi equations with measurable
  time-dependent {H}amiltonians.
\newblock {\em Nonlinear Anal. 11}, 5 (1987), 613--621.

\bibitem{LSbook}
{\sc Lions, P.-L., and Souganidis, P.~E.}
\newblock Fully nonlinear first- and second-order stochastic partial
  differential equations.
\newblock to appear.

\bibitem{LS1}
{\sc Lions, P.-L., and Souganidis, P.~E.}
\newblock Fully nonlinear stochastic partial differential equations.
\newblock {\em C. R. Acad. Sci. Paris S\'{e}r. I Math. 326}, 9 (1998),
  1085--1092.

\bibitem{LS2}
{\sc Lions, P.-L., and Souganidis, P.~E.}
\newblock Fully nonlinear stochastic partial differential equations: non-smooth
  equations and applications.
\newblock {\em C. R. Acad. Sci. Paris S\'{e}r. I Math. 327}, 8 (1998),
  735--741.

\bibitem{LS4}
{\sc Lions, P.-L., and Souganidis, P.~E.}
\newblock Fully nonlinear stochastic pde with semilinear stochastic dependence.
\newblock {\em C. R. Acad. Sci. Paris S\'{e}r. I Math. 331}, 8 (2000),
  617--624.

\bibitem{LS3}
{\sc Lions, P.-L., and Souganidis, P.~E.}
\newblock Uniqueness of weak solutions of fully nonlinear stochastic partial
  differential equations.
\newblock {\em C. R. Acad. Sci. Paris S\'{e}r. I Math. 331}, 10 (2000),
  783--790.

\bibitem{LTY}
{\sc Luo, S., Tran, H.~V., and Yu, Y.}
\newblock Some inverse problems in periodic homogenization of
  {H}amilton-{J}acobi equations.
\newblock {\em Arch. Ration. Mech. Anal. 221}, 3 (2016), 1585--1617.

\bibitem{MTY}
{\sc Mitake, H., Tran, H.~V., and Yu, Y.}
\newblock Rate of convergence in periodic homogenization of {H}amilton-{J}acobi
  equations: the convex setting.
\newblock {\em Arch. Ration. Mech. Anal. 233}, 2 (2019), 901--934.

\bibitem{KP}
{\sc Papanicolaou, G.~C., and Kohler, W.}
\newblock Asymptotic theory of mixing stochastic ordinary differential
  equations.
\newblock {\em Comm. Pure Appl. Math. 27\/} (1974), 641--668.

\bibitem{PV}
{\sc Papanicolaou, G.~C., and Varadhan, S. R.~S.}
\newblock A limit theorem with strong mixing in {B}anach space and two
  applications to stochastic differential equations.
\newblock {\em Comm. Pure Appl. Math. 26\/} (1973), 497--524.

\bibitem{PP}
{\sc Pardoux, E., and Piatnitski, A.}
\newblock Homogenization of a singular random one-dimensional {PDE} with
  time-varying coefficients.
\newblock {\em Ann. Probab. 40}, 3 (2012), 1316--1356.

\bibitem{RT}
{\sc Rezakhanlou, F., and Tarver, J.~E.}
\newblock Homogenization for stochastic {H}amilton-{J}acobi equations.
\newblock {\em Arch. Ration. Mech. Anal. 151}, 4 (2000), 277--309.

\bibitem{Se}
{\sc Seeger, B.}
\newblock Homogenization of pathwise {H}amilton-{J}acobi equations.
\newblock {\em J. Math. Pures Appl. (9) 110\/} (2018), 1--31.

\bibitem{SeP}
{\sc Seeger, B.}
\newblock Perron's method for pathwise viscosity solutions.
\newblock {\em Comm. Partial Differential Equations 43}, 6 (2018), 998--1018.

\bibitem{S}
{\sc Souganidis, P.~E.}
\newblock Stochastic homogenization of {H}amilton-{J}acobi equations and some
  applications.
\newblock {\em Asymptot. Anal. 20}, 1 (1999), 1--11.

\bibitem{Snotes}
{\sc Souganidis, P.~E.}
\newblock Pathwise solutions for fully nonlinear first- and second-order
  partial differential equations with multiplicative rough time dependence.
\newblock In {\em Singular random dynamics}, vol.~2253 of {\em Lecture Notes in
  Math.} Springer, Cham, [2019] \copyright 2019, pp.~75--220.

\bibitem{W}
{\sc Watanabe, H.}
\newblock Averaging and fluctuations for parabolic equations with rapidly
  oscillating random coefficients.
\newblock {\em Probab. Theory Related Fields 77}, 3 (1988), 359--378.

\end{thebibliography}
\bibliographystyle{acm}

\end{document}